\documentclass[12pt,reqno]{amsart}
\usepackage[margin=1in]{geometry}
%\usepackage{dsfont}
% MODIFYING AMSART.CLS:
\makeatletter
\def\section{\@startsection{section}{1}%
	\z@{.7\linespacing\@plus\linespacing}{.5\linespacing}%
	{\bfseries%\normalfont\scshape
		\centering
}}
\def\@secnumfont{\bfseries}
\makeatother
% END OF MODIFICATION OF AMSART.CLS.
\usepackage{amsmath,amssymb,amsthm,graphicx,amsxtra, setspace}
\usepackage[utf8]{inputenc}
\usepackage{mathrsfs}
\usepackage{alltt}
\usepackage{relsize}
\usepackage{hyperref}
\usepackage{aliascnt}
\usepackage{tikz}
\usepackage{mathtools}
\usepackage{multicol}
\usepackage{upgreek}
\usepackage{graphicx,type1cm,eso-pic,color}
\allowdisplaybreaks

% THEOREM Environments ---------------------------------------------------
%\setlength{\textheight}{19.5 cm} \setlength{\textwidth}{12.5 cm}
\newtheorem{theorem}{Theorem}[section]
\newtheorem{lemma}[theorem]{Lemma}

\newtheorem{assumption}[theorem]{Assumption}

\newtheorem{definition}[theorem]{Definition}
\newtheorem{example}[theorem]{Example}
\newtheorem{remark}[theorem]{Remark}

% MATH -----------------------------------------------------------

\renewcommand{\d}{\/\mathrm{d}\/}

\def\w{\textbf{W}^{\varepsilon}_{{\theta}^{\varepsilon}}}

\def\L{\mathbb{L}}
\def\X{\mathbb{X}}
\def\A{\mathrm{A}}

\def\F{\mathrm{F}}

\def\g{\mathbf{g}}

\def\C{\mathrm{C}}

\def\B{\mathrm{B}}

\def\y{\mathbf{y}}

\def\z{\mathbf{z}}
\def\p{\mathbf{p}}

\def\v{\mathbf{v}}
\def\w{\mathbf{w}}
\def\W{\mathrm{W}}

\def\f{\mathbf{f}}

\def\U{\mathrm{U}}

\def\u{\mathbf{u}}

\def\H{\mathbb{H}}

\newcommand{\R}{\mathbb{R}}

\renewcommand{\d}{\/\mathrm{d}\/}

% ----------------------------------------------------------------
%\pagenumbering{gobble}

\let\originalleft\left
\let\originalright\right
\renewcommand{\left}{\mathopen{}\mathclose\bgroup\originalleft}
\renewcommand{\right}{\aftergroup\egroup\originalright}

\newcommand{\Addresses}{{% additional braces for segregating \footnotesize
		\footnote{
			%	\footnotesize
			
			\noindent \textsuperscript{1}Department of Mathematics, Indian Institute of Technology Roorkee-IIT Roorkee,
			Haridwar Highway, Roorkee, Uttarakhand 247667, INDIA.\par\nopagebreak
			\noindent  \textit{e-mail:} \texttt{manilfma@iitr.ac.in, maniltmohan@gmail.com.}
			
			\noindent \textsuperscript{*} Corresponding author.

			\textit{Key words:} Tidal dynamics system, Pontryagin's maximum principle, Optimal control.
			
			Mathematics Subject Classification (2010): 49J20, 35Q35, 49K20.

}}}

\begin{document}

	\title[Optimal Control of the  2D Tidal Dynamics System]{First Order Necessary Conditions of Optimality for the Two Dimensional Tidal Dynamics System\Addresses}
	\author[M. T. Mohan]
	{Manil T. Mohan\textsuperscript{1*}}

	\maketitle
	
	\begin{abstract}
		In this work, we consider the two dimensional tidal dynamics equations in a bounded domain and address some optimal control problems like total energy minimization, minimization of dissipation of energy of the	flow, etc. We also examine an another interesting control problem which is similar to that of the data assimilation problems in meteorology of obtaining unknown initial data, when the system under consideration is the tidal dynamics, using optimal control techniques. For these cases, different  distributed optimal control problems are formulated as the minimization of suitable cost functionals subject to the controlled two dimensional tidal dynamics system. The existence of an optimal control as well as the first order necessary conditions of optimality for such systems is established and the optimal control is characterized via  adjoint variable.  We also establish the uniqueness of optimal control in small time interval.   
	\end{abstract}

\section{Introduction}\label{sec1}\setcounter{equation}{0} 
Many mathematical developments in the infinite dimensional nonlinear system theory and partial differential equations awarded a new dimension to the control theory of fluid dynamics models (cf. \cite{FART,SDMTS,fursikov,gunzburger,lions,raymond,sritharan} etc). Optimal control theory of fluid dynamic equations has been one of the major research areas of applied mathematics with  a good number of applications in Oceanography, Geophysics, Engineering and Technology (see for example \cite{VB,fursikov,gunzburger, sritharan}, etc). Controlling fluid flow and turbulence inside a flow in a given physical domain by means of body forces, boundary data, temperature, initial data, etc,  is an interesting control problem in fluid mechanics. Ocean tides have been investigated by many mathematicians and physicists,	starting from  Galileo Galilei, Isaac Newton etc (see \cite{GaG,IN}). The ocean tide informations are heavily used in  the geophysical areas such as Earth tides, the elastic properties of the Earth's crust, tidal variations of gravity, and in calculating the orbits of artificial satellites used for space exploration etc (cf. \cite{MK,MK1}). Laplace rearranged the rotating shallow water equations into the system that underlies the tides and is known as the \emph{Laplace tidal equations}. By taking the shallow water model on a rotating sphere, which is a slight generalization of the Laplace model, the tidal dynamics model considered in \cite{BAK,MK,MK1} is obtained. We refer the readers to \cite{MK,MK1,PJ} etc,  for an extensive study on the recent progress in this field.  In this article, we consider the controlled two dimensional tidal dynamics equations in bounded domains (see \eqref{0} and \eqref{1} below) and study the optimal control problems including total energy minimization problem (the tracking problem), minimization of dissipation of energy of the flow, an initial value optimization problem, which is similar to the data assimilation problem in meteorology,  etc. We establish the first order necessary conditions of optimality and characterize the optimal control via adjoint variable. 

Let us now describe the two dimensional tidal dynamics equations with a distributed control. Let $\Omega$ be a bounded subset of $\mathbb{R}^2$ with  smooth boundary conditions.  That is, $\Omega$ is a horizontal ocean basin, where tides are induced over the time interval $[0, T ]$. The boundary contour $\partial\Omega$ is composed of two disjoint parts: a solid part  $\Gamma_1$, coinciding with the edge of the continental and island shelves, an open boundary $\Gamma_2$. Let us assume that sea water is incompressible and the vertical velocities are small compared with the horizontal velocities, and hence we are able to exclude acoustic waves. Also long waves, including tidal waves, are stood out from the family of gravitational oscillations. Moreover, in order to reduce computational difficulties, we assume that the Earth is absolutely rigid, and the gravitational field of the Earth is not affected by movements of ocean tides. Also, we ignore the effect of the atmospheric tides on the ocean tides and the effect of curvature of the surface of the Earth on horizontal turbulent friction. Under these commonly used assumptions, we consider the following controlled tidal dynamics model (cf. \cite{RGG,BAK,MK,MK1,Manna,yin}, etc):
\begin{equation}\label{0}
\left\{
\begin{aligned}
\frac{\partial\mathbf{w}}{\partial t}+l\mathbf{k}\times\mathbf{w}+g\nabla\zeta+\frac{r}{h}|\mathbf{w}|\mathbf{w}-\kappa_h\Delta\mathbf{w}&=\mathbf{g}+\U,\ \text{ in }\Omega\times(0,T), \\
\frac{\partial\zeta}{\partial t}+\mathrm{div}(h\mathbf{w})&=0,\ \text{ in }\Omega\times(0,T), \\
\mathbf{w}&=\mathbf{w}^0,  \ \text{ on }\partial\Omega\times[0,T],\\ \mathbf{w}(0)&=\mathbf{w}_0,\ \zeta(0)=\zeta_0,\ \text{ in }\Omega,\\
\mathbf{w}^0=\mathbf{0},\text{ on }\Gamma_1,&\  \text{ and } \ \int_0^T\int_{\Gamma_2}h\mathbf{w}^0\cdot \mathbf{n}\d\Gamma_2\d t=0,
\end{aligned}
\right.
\end{equation}
where  $\mathbf{w}(x,t)=(w_1(x,t),w_2(x,t))\in\mathbb{R}^2$, the horizontal transport vector, is the averaged integral of the velocity vector over the vertical axis, $l = 2\rho\cos\theta$ is the Coriolis parameter, where $\rho$ is the angular velocity of the Earth rotation and $\theta$ is the colatitude, $\mathbf{k}$ is a unit vector oriented vertically upward, $\mathbf{k}\times\w=(-w_2,w_1)$, $g$ is the free fall acceleration,  $r$ is the bottom friction factor,  $\kappa_h$ is the horizontal turbulent viscosity coefficient, $\mathbf{n}$ is the unit outward normal to the boundary $\Gamma_2$. The scalar  $\zeta(x,t)\in\mathbb{R}$ is the deviations of free surface with respect to the ocean bottom (i.e.,  $\zeta=\zeta_s-\zeta_b$, where $\zeta_s$ is the surface and $\zeta_b$ is the shift of the ocean bottom),  $\mathbf{g}=\gamma_Lg\nabla\zeta^+$ is the known tide-generating force with $\gamma_L$ is the Love factor approximately equal to $0.7$ and $\zeta^+$ is the height of the static tide.  Also $\U$ is the \emph{distributed control} (an external forcing) acting on the system.  The tidal dynamics system \eqref{0} is of hyperbolic-parabolic type.   Some practical applications of controlling tidal dynamics  has been described in \cite{RMo}. The water level control is simulated by means of gate operations acting at the open mouth of the tidal basin. The author  studied the tidal fluctuations in the basin under some gate operations, which do not require complete closure of the basin mouth, but only a variable reduction of its width (see \cite{RMo} for more details).

Let us now discuss about  the boundary conditions satisfied by the averaged velocity field. Remember that the contour $\partial\Omega$ consists of two parts, a solid part $\Gamma_1$ coinciding with the shelf edge and the open boundary $\Gamma_2$. The function $\mathbf{w}^0(x,t)$ is a known function  on the boundary. This impermeability condition is given on the solid part of the boundary, that is, the restriction $\mathbf{w}^0|_{\Gamma_1}= \mathbf{0}$ is the no-slip boundary condition on the shoreline, and $\int_0^T\int_{\Gamma_2}h\mathbf{w}^0\cdot\mathbf{n}\d\Gamma_2\d t=0$, follows from the mass conservation law. In \eqref{0}, $h(x)$ is the vertical scale of motion, that is, the depth of the calm sea at $x$ in the region $\Omega$  and we assume that it is a continuously differentiable function nowhere becoming zero,  so that 
\begin{align}
\label{hes}
0<\lambda= \min_{x\in\Omega} h(x),\quad \mu= \max_{x\in \Omega}h(x),\quad \max_{x\in \Omega}|\nabla h(x)|\leq M,\end{align}
where $M$ is a positive constant, which equals to zero at a constant ocean depth. The initial datum  $\mathbf{w}_0(x)$ and $\zeta_0(x)$ are given. In particular, $\mathbf{w}_0$ and $\zeta_0$ can be set equal to zero, which indicates the fact that initially the ocean is at rest.  The unique global solvability results of the system \eqref{0} is obtained by simplifying the non-homogeneous boundary value problem to a homogeneous Dirichlet boundary value problem (see \cite{IpM,MK,MK1}, etc for more details).

In order to simplify the non-homogeneous boundary value problem to a homogeneous Dirichlet boundary value problem, we set
\begin{align*}
\mathbf{u}(x,t)=\mathbf{w}(x,t)-\mathbf{w}^0(x,t),
\end{align*}
and 
\begin{align*}
\xi(x,t)=\zeta(x,t)+\int_0^t\mathrm{div}(h(x)\mathbf{w}^0(x,s))\d s,
\end{align*}	
which are referred to as the \emph{tidal flow} and the \emph{elevation}. The full flow $\mathbf{w}^0$, which is given a priori on the boundary $\partial\Omega$, has been extended to the whole domain $\Omega\times[0,T]$ as a smooth function and still denoted by $\mathbf{w}^0$. Intuitively, one can see that a solution of the Laplace equation (in $\mathrm{Q}_T:=\Omega\times[0,T]\subset\R^3$) with $\w^0$ as the boundary condition is  smooth.   Whitney in \cite{HW} solved the problem of extending functions from a domain with a sufficiently smooth boundary to the whole space while preserving continuity of the partial derivatives. Thus, using Whitney's construction, there is a $\C^{\infty}$ extension, by writing $\mathrm{Q}_T$ as a union of Whitney cubes (see Theorem 3, Chapter I, \cite{EMS} also) and using $\C^{\infty}$  partition of unity. With the above substitution, the controlled tidal dynamics system \eqref{0} can be written as (see \eqref{a1} below for an abstract form):

\begin{equation}\label{1}
\left\{
\begin{aligned}
\frac{\partial \mathbf{u}(t)}{\partial t}+\A\mathbf{u}(t)+\B(\mathbf{u}(t))+\nabla \xi (t)&=\mathbf{f}(t)+\U(t),\ \text{ in }\ \Omega\times(0,T),\\
\frac{\partial \xi(t)}{\partial t}+\mathrm{div}(h \mathbf{u}(t))&=0, \ \text{ in }\ \Omega\times(0,T),\\
\mathbf{u}(t)&=\mathbf{0},\ \text{ on }\ \partial\Omega\times[0,T],\\
\mathbf{u}(0)&=\mathbf{u}_0,\ \xi(0)=\xi_0, \ \text{ in }\ \Omega,
\end{aligned}
\right.
\end{equation}
where we scaled $g$ to unity. For $\U=\mathbf{0}$, we call the system \eqref{1} as an uncontrolled tidal dynamics system. In \eqref{1}, $\A$ denotes the matrix operator:
\begin{align}\label{2}
\A := 
\left(\begin{array}{cc} -\alpha\Delta & -\beta\\ \beta &  -\alpha\Delta \end{array}\right),
\end{align}
where $\Delta$ is the Laplacian operator, $\alpha:=\kappa_h$, $\beta:=2\rho\cos\theta$ are positive constants, $\B$ denotes the nonlinear vector operator,
\begin{align}\label{3}
\B(\mathbf{u}):=\gamma(x)|\mathbf{u}+\mathbf{w}^0|(\mathbf{u}+\mathbf{w}^0).
\end{align} 
The function $\gamma(x):=\frac{r}{h(x)}$ is a strictly positive smooth function.
The function $\f$ and initial datum $\mathbf{u}_0$ and $\xi_0$ are  given by 
\begin{equation}\label{5}
\left\{
\begin{aligned}
\f&=\mathbf{g}-\frac{\partial\mathbf{w}^0}{\partial t}+\nabla\int_0^t\mathrm{div}(h\mathbf{w}^0)\d s+\kappa_h\Delta\mathbf{w}^0-l\mathbf{k}\times\mathbf{w}^0,\\
\mathbf{u}_0(x)&=\mathbf{w}_0(x)-\mathbf{w}^0(x,0), \\ \xi_0(x)&=\zeta_0(x).
\end{aligned}
\right.
\end{equation}

Let us now discuss  about the solvability results available  in the literature for the system \eqref{0}. The existence and uniqueness of a weak solution for the tidal dynamic equations (see systems \eqref{0} or \eqref{1}) in bounded domains has been obtained in \cite{IpM,MK,MK1}, using a standard Galerkin approximation technique and compactness arguments. The authors in \cite{Manna} obtained similar results for the deterministic tidal dynamics system using a global monotonicity property of the linear and nonlinear operators (see Lemma \ref{lem23} below). The existence of a periodic solution for the tidal dynamics problem in two dimensional finite domains is obtained in \cite{RGG}. The existence and uniqueness of weak and strong solutions of the stationary tidal dynamic equations in bounded and unbounded domains is obtained in \cite{MTM}. Furthermore, the author in \cite{MTM} established a uniform Lyapunov  stability of the steady state solution. The authors in \cite{VIA} developed a numerical approach  for solving the tidal dynamics problem, based on the splitting methods and the optimal control theory. The global solvability results for stochastic perturbations in bounded and unbounded domains, and asymptotic analysis of solutions  have been established in \cite{AMM,HSMB,Manna,SSKB,yin} etc.

The control of water level in tidal basins like harbours, bays, lagoons and tidal inlets, etc is an interesting problem, which has  many applications in the practical sense  such as defense from storm surges, creation of water reservoirs and conversion of tidal energy (see \cite{RMo} for more details). The author in \cite{RMo} applied Pontryagin's maximum principle  in optimizing the sea level in a small basin  induced by a tidal component, where the equations describing the dynamics of the basin have been simplified.  Pontryagin's maximum principle for the state constrained optimization problem using Ekeland's variational principle is established in \cite{MTM2}.  The dynamic programming method and feedback analysis for an optimal control of the 2D tidal dynamics system is carried out in \cite{MTM1}. The authors in \cite{AMM} formulated a martingale problem of Stroock and Varadhan associated to an initial value control problem and established the existence of optimal controls.  Some optimal control problems in tidal power generation and related problems are considered in \cite{NRCB,RyR,RyR1,ZYJM}, etc. 

The rest of the paper is organized as follows. In the next section, we give the necessary functional setting needed to obtain the global solvability results of the system \eqref{1}. We also consider the corresponding linearized system in the same section and establish the existence and uniqueness of a global weak solution (Theorem \ref{linear}). A distributed optimal control problem  as the minimization of a suitable cost functional subject to the controlled tidal dynamics system \eqref{1} is formulated in section \ref{sec3}. The existence of an optimal triplet (Theorem \ref{optimal}) as well as the first order necessary conditions of optimality is also established in this section (Theorem \ref{main}). The optimal control is characterized using the adjoint variable and the solvability of the adjoint system is also discussed (Theorem \ref{thm3.4}). We also establish the uniqueness of optimal control in small time interval (Theorem \ref{thm3.8}).    Similar results for a different optimization problem, namely initial data optimization problem (a problem similar to the data assimilation problems of meteorology) is also obtained in the same section (Theorems \ref{data}).

\section{Mathematical Formulation}\label{sec2} \setcounter{equation}{0} 
In this section, we explain the necessary  function spaces needed to obtain the global solvability results and provide the global existence and uniqueness of weak solution of the uncontrolled system \eqref{1}. Also, we provide an insight into the global solvability of the corresponding linearized problem.

\subsection{Functional setting} 
For $p\geq 1$, we denote by $\mathrm{L}^p(\Omega):=\mathrm{L}^p(\Omega;\mathbb{R})$, for the equivalence class of measurable real-valued functions  for which the $p^{\text{th}}$ power of the absolute value is Lebesgue integrable. We know that $\mathrm{L}^2(\Omega)$ is a Hilbert space, and the norm and inner product in $\mathrm{L}^2(\Omega)$ are denoted by $\|\cdot\|_{\mathrm{L}^2}$ and $(\cdot,\cdot)_{\mathrm{L}^2}$. We define $\mathbb{L}^p(\Omega):=\mathrm{L}^p(\Omega; \mathbb{R}^2)$ as the Banach space of Lebesgue measurable $\mathbb{R}^2$-valued, $p$-integrable functions on $\Omega$  with the norm:
$$\|\mathbf{u}\|_{\mathbb{L}^p}:= \left(\int_\Omega|\mathbf{u}(x)|^p\mathrm{d}x\right)^{1/p}.$$
For $p=2,\,\mathbb{L}^2(\Omega):=\mathrm{L}^2(\Omega; \mathbb{R}^2)$  is a Hilbert space equipped with the inner product given by 
$$ (\mathbf{u}, \v)_{\mathbb{L}^2} :=\int_\Omega\mathbf{u}(x)\cdot \v(x)\mathrm{d}x,\  \mathbf{u},\v  \in \mathbb{L}^2(\Omega).$$
Then, the norm on $\mathbb{L}^2(\Omega)$ is defined by  $$\|\mathbf{u}\|_{\mathbb{L}^2}:=(\mathbf{u},\mathbf{u})_{\mathbb{L}^2}=\left(\int_\Omega|\mathbf{u}(x)|^2\mathrm{d}x\right)^{1/2}.$$
Let $\mathbb{H}^1(\Omega):=\mathrm{H}^1(\Omega;\mathbb{R}^2)$ denotes the Sobolev space $\mathbb{W}^{1,2}(\Omega):=\mathrm{W}^{1,2}(\Omega;\mathbb{R}^2)$ with the norm defined by 
$$\|\mathbf{u}\|_{\mathbb{H}^1}^2:=\|\mathbf{u}\|_{\mathbb{L}^2}^2+\|\nabla \mathbf{u}\|_{\mathbb{L}^2}^2, \ \text{ for all } \ \mathbf{u} \in \mathbb{H}^1(\Omega).$$ 
We also let $\mathbb{H}_0^1(\Omega):=\mathrm{H}_0^1(\Omega; \mathbb{R}^2)$ to be the closure  of $\mathrm{C}_c^\infty(\Omega;\mathbb{R}^2)$ in $\mathbb{H}^1(\Omega)$ norm, where $\mathrm{C}_c^\infty(\Omega;\mathbb{R}^2)$ is the space of all infinitely differentiable functions with compact support in $\Omega$. Then $\mathbb{H}_0^1(\Omega)$ is also a Sobolev space under the induced norm. Since $\Omega$ is a bounded domain, in view of the Poincar\'e inequality, that is, $$\|\mathbf{u}\|_{\mathbb{L}^2}\leq C_{\Omega}\|\nabla \mathbf{u}\|_{\mathbb{L}^2},$$ the norms $\|\nabla \mathbf{u}\|_{\mathbb{L}^2(\Omega)}$ and $\|\mathbf{u}\|_{\mathbb{H}^1}$  are equivalent in $\mathbb{H}_0^1(\Omega)$. Then, we know that   $$ \|\mathbf{u}\|_{\mathbb{H}_0^1}:=\|\nabla \mathbf{u}\|_{\mathbb{L}^2}=\left(\int_\Omega|\nabla \mathbf{u}(x)|^2\mathrm{d}x\right)^{1/2},$$ defines a norm on  $\mathbb{H}_0^1(\Omega)$, which is equivalent to the usual $\H^1(\Omega)$ norm.  Moreover,  $\H_0^1(\Omega)$ is a Hilbert space with inner product: 
$$(\mathbf{u},\v)_{\mathbb{H}_0^1}:=(\nabla\mathbf{u},\nabla\v)_{\mathbb{L}^2}=\int_{\Omega}\nabla\mathbf{u}(x)\cdot\nabla\v(x)\d x.$$

We denote the dual of $\mathbb{H}_0^1(\Omega)$ by $(\mathbb{H}_0^1(\Omega))'=\mathbb{H}^{-1}(\Omega)$. The induced duality between the spaces $\mathbb{H}_0^1(\Omega)$ and  $\mathbb{H}^{-1}(\Omega)$ is denoted by
$\langle\cdot{,}\cdot\rangle$.  Then, we have the following continuous and dense embedding:
$$\mathbb{H}_0^1(\Omega)\subset\mathbb{L}^2(\Omega)\equiv(\mathbb{L}^2(\Omega))'\subset\mathbb{H}^{-1}(\Omega).$$ The   embedding $\mathbb{H}_0^1(\Omega)\subset\mathbb{L}^2(\Omega)$ is also compact, since $\Omega$ is bounded. Using the Gelfand triple $(\mathbb{H}_0^1(\Omega), \mathbb{L}^2(\Omega), \mathbb{H}^{-1}(\Omega))$, we may consider $\nabla$ or $\Delta$ as a linear map from  $\mathbb{L}^2(\Omega)$ or $\mathbb{H}_0^1(\Omega)$ into $\mathbb{H}^{-1}(\Omega)$ respectively.

\iffalse 
Similarly, one can define $\mathrm{H}^1(\Omega)=\mathrm{H}^1(\Omega;\mathbb{R})$. Let us denote by $(\mathrm{H}^1(\Omega))'$ as the dual of  $\mathrm{H}^1(\Omega)$. We duality pairing between $\mathrm{H}^1(\Omega)$ and $(\mathrm{H}^1(\Omega))'$  is also denoted by $\langle\cdot,\cdot\rangle$. 
\fi

We next give the well known inequality due to Ladyzhenskaya (see Lemma 1 and 2, Chapter 1, \cite{OAL}), which is used in the paper quite frequently. 
\begin{lemma}[Ladyzhenskaya inequality]\label{lady}
	For $\mathbf{u}\in\ \mathrm{C}_0^{\infty}(\Omega;\mathbb{R}^n), n = 2, 3$, there exists a constant $C>0$ such that
	\begin{align}\label{lady1}
	\|\mathbf{u}\|_{\mathbb{L}^4}\leq C^{1/4}\|\mathbf{u}\|_{\mathbb{L}^2}^{1-\frac{n}{4}}\|\nabla\mathbf{u}\|_{\mathbb{L}^2}^{\frac{n}{4}},\text{ for } n=2,3,
	\end{align}
	where $C=2,4$, for $n=2,3$ respectively. 
\end{lemma}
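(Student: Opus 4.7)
The plan is to reduce to the classical scalar Ladyzhenskaya inequality on $\R^n$ and then transfer back to the vector case. Since $\mathbf{u}\in \mathrm{C}_0^\infty(\Omega;\R^n)$, I first extend $\mathbf{u}$ by zero outside $\Omega$ so that the resulting function belongs to $\mathrm{C}_c^\infty(\R^n;\R^n)$ without changing either side of \eqref{lady1}. This allows me to work on the whole space and use the fundamental theorem of calculus along each coordinate direction. The passage from the scalar version to the vector version will be done by applying the scalar inequality to the nonnegative function $v(x):=|\mathbf{u}(x)|$ and using the pointwise bound $|\nabla v(x)|\le |\nabla\mathbf{u}(x)|$ (interpreted componentwise, where $|\nabla\mathbf{u}|^2=\sum_{i,j}(\partial_i u_j)^2$), which follows from $v\,\nabla v=\sum_j u_j\nabla u_j$ and the Cauchy--Schwarz inequality.

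For the scalar case in dimension $n=2$, I would write, for $v\in \mathrm{C}_c^\infty(\R^2)$,
\begin{equation*}
v^2(x_1,x_2)=2\int_{-\infty}^{x_1}v(s,x_2)\,\partial_1 v(s,x_2)\,\d s\le 2\int_{\R}|v(s,x_2)||\partial_1 v(s,x_2)|\,\d s,
\end{equation*}
and the analogous estimate in the $x_2$ direction. Multiplying these two bounds and integrating in $x_1$ and $x_2$, the integrand on the right splits (the first factor depends only on $x_2$, the second only on $x_1$), so Fubini together with the Cauchy--Schwarz inequality yields
\begin{equation*}
\int_{\R^2}v^4\,\d x\le 4\,\|v\|_{\mathrm{L}^2}^2\,\|\partial_1 v\|_{\mathrm{L}^2}\|\partial_2 v\|_{\mathrm{L}^2}\le 2\,\|v\|_{\mathrm{L}^2}^2\,\|\nabla v\|_{\mathrm{L}^2}^2,
\end{equation*}
where the last step uses $2ab\le a^2+b^2$. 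Taking fourth roots gives the claimed constant $C=2$ in the two-dimensional case.

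For $n=3$ I would combine the same identity in each of the three coordinate directions. From $v^2(x)\le 2\,h_i(x_{\neq i})$ with $h_i(x_{\neq i}):=\int_{\R}|v||\partial_i v|\,\d x_i$ (for $i=1,2,3$), multiplying the three bounds gives $v^6\le 8\,h_1 h_2 h_3$. Integrating over $\R^3$, applying the generalized H\"older inequality with exponents $(2,2,2)$ slice by slice, and using Cauchy--Schwarz on each $\int h_i\,\d x_{\neq i}\le \|v\|_{\mathrm{L}^2}\|\partial_i v\|_{\mathrm{L}^2}$, one obtains after a raising-to-the-$2/3$ step
\begin{equation*}
\int_{\R^3}v^4\,\d x\le 4\,\|v\|_{\mathrm{L}^2}\,\|\nabla v\|_{\mathrm{L}^2}^3,
\end{equation*}
which is the required bound with $C=4$. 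An equivalent, cleaner route is to apply the two-dimensional inequality on each horizontal slice $\{x_3=\text{const}\}$, integrate in $x_3$, and use a Sobolev trace-style bound on $\sup_{x_3}\int v^2\,\d x_1\d x_2$ via $2\int v\,\partial_3 v\,\d x_3$.

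The main technical obstacle is the transfer from the scalar inequality to the vector-valued case, since $v=|\mathbf{u}|$ is only Lipschitz (not smooth) so the integration-by-parts identities used above cannot be applied to $v$ directly. I would circumvent this by applying the scalar estimates not to $v$ but to the smooth function $v_\delta:=\sqrt{|\mathbf{u}|^2+\delta}-\sqrt{\delta}$ for $\delta>0$, for which $v_\delta\in \mathrm{C}_c^\infty(\R^n)$, $|\nabla v_\delta|\le |\nabla\mathbf{u}|$ pointwise, and $v_\delta\to |\mathbf{u}|$ in $\mathbb{L}^4\cap \mathbb{L}^2$ as $\delta\downarrow 0$, so that the dominated convergence theorem recovers \eqref{lady1} with the stated constants.
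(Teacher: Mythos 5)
The paper gives no proof of this lemma at all---it simply cites Lemmas 1 and 2 of Chapter 1 of Ladyzhenskaya's book \cite{OAL}---and your write-up is essentially that classical argument, correctly executed: the identity $v^2\le 2\int|v||\partial_i v|\,\mathrm{d}x_i$, Fubini and Cauchy--Schwarz give $C=2$ for $n=2$, and the reduction of the vector case to the scalar case via $v_\delta=\sqrt{|\mathbf{u}|^2+\delta}-\sqrt{\delta}$ is a clean way to preserve the stated constants (arguing componentwise would degrade them), so the proposal is correct. One caution on your first sketch of the $n=3$ case: multiplying $v^2\le 2h_i$ over $i=1,2,3$ produces $\int v^6$, and the generalized H\"older step there requires the $\mathrm{L}^2$ norms $\|h_i\|_{\mathrm{L}^2(\mathrm{d}x_{\neq i})}$, which do not factor as $\|v\|_{\mathrm{L}^2}\|\partial_i v\|_{\mathrm{L}^2}$ (your bound $\int h_i\,\mathrm{d}x_{\neq i}\le\|v\|_{\mathrm{L}^2}\|\partial_i v\|_{\mathrm{L}^2}$ is an $\mathrm{L}^1$ bound), so as written that route leads to an $\mathrm{L}^6$-type Sobolev estimate rather than \eqref{lady1}. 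The ``cleaner route'' you then give---apply the two-dimensional inequality on each slice $\{x_3=\mathrm{const}\}$, integrate in $x_3$, and use $\sup_{x_3}\int v^2\,\mathrm{d}x_1\mathrm{d}x_2\le 2\|v\|_{\mathrm{L}^2}\|\partial_3 v\|_{\mathrm{L}^2}$---is the standard correct argument and yields exactly $C=4$, so the lemma is fully proved once you rely on that version for $n=3$.
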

Thus, for $n=2$, we have 
\begin{align}\label{8}
\|\mathbf{u}\|_{\mathbb{L}^4}\leq 2^{1/4}\|\mathbf{u}\|_{\mathbb{L}^2}^{1/2}\|\nabla\mathbf{u}\|_{\mathbb{L}^2}^{1/2}\leq 2^{1/4} C_{\Omega}^{1/2}\|\nabla\mathbf{u}\|=\widetilde{C}_{\Omega}\|\mathbf{u}\|_{\mathbb{H}_0^1},
\end{align}
where $\widetilde{C}_{\Omega}=2^{1/4}C_{\Omega}$ and we also used the Poincar\'e inequality. Thus, we obtain a continuous embedding  $\mathbb{H}_0^1(\Omega)\hookrightarrow \mathbb{L}^4(\Omega)$. 

\subsection{Linear operator} Let us define the  non-symmetric bilinear form:
\begin{align*}
a(\mathbf{u},\v):=\alpha[(\nabla\mathbf{u}_1, \nabla\v_1)+(\nabla\mathbf{u}_2, \nabla\v_2)]+\beta[(\mathbf{u}_1,\v_2)-(\mathbf{u}_2, \v_1)],
\end{align*}
where $\mathbf{u}=(\mathbf{u}_1,\mathbf{u}_2), \v=(\v_1,\v_2)$. If $\mathbf{u}$ has a smooth second order derivatives, then
$$a(\mathbf{u},\v)=(\A\mathbf{u}, \v)_{\mathbb{L}^2}, \ \text{ for all }\ \v\in \mathbb{H}_0^1(\Omega).$$ We consider $\A\mathbf{u}=-\alpha \Delta \mathbf{u}+\beta \mathbf{k}\times \mathbf{u}$ and an integration by parts twice yields (using the fact that $\mathbf{u}\big|_{\partial\Omega}=\mathbf{0}$)
\begin{align*}
(-\alpha \Delta \mathbf{u}+\beta \mathbf{k}\times \mathbf{u},\v)_{\mathbb{L}^2}&=\alpha(\nabla\mathbf{u},\nabla\v)_{\mathbb{L}^2}+\beta(\mathbf{k}\times\mathbf{u},\v)_{\mathbb{L}^2}\\&=(\mathbf{u},-\alpha\Delta\v-\beta\mathbf{k}\times\v)_{\mathbb{L}^2},
\end{align*}
and hence $(\A\mathbf{u},\v)_{\mathbb{L}^2}\neq (\mathbf{u},\A\v)_{\mathbb{L}^2}$, so that $\A$ is not symmetric. 
The bilinear form $a(\cdot,\cdot)$ is continuous and coercive in $\mathbb{H}_0^1(\Omega)$, that is,
\begin{align}
|a(\mathbf{u},\v)|&\leq C_a\|\mathbf{u}\|_{\mathbb{H}_0^1}\|\v\|_{\mathbb{H}_0^1}, \ \text{ for all }\ \mathbf{u}, \v\in \mathbb{H}_0^1(\Omega),\label{6}\\ 
(\A\mathbf{u},\mathbf{u})_{\mathbb{L}^2}&=a(\mathbf{u},\mathbf{u})=\alpha\|\nabla \mathbf{u}\|_{\mathbb{L}^2}^2=\alpha\|\mathbf{u}\|_{\mathbb{H}_0^1}^2,\label{7}
\end{align}
for some positive constant $C_a$. By means of the Gelfand triple we may consider $\A$, given by \eqref{2}, as a mapping from $\mathbb{H}_0^1(\Omega)$ into its dual $\mathbb{H}^{-1}(\Omega)$, so that $\langle \A\mathbf{u},\mathbf{u}\rangle=\alpha\|\nabla \mathbf{u}\|_{\mathbb{L}^2}^2=\alpha\|\mathbf{u}\|_{\mathbb{H}_0^1}^2$.
\subsection{Nonlinear operator}
As discussed in \eqref{3}, we define the nonlinear operator $\B(\cdot)$ by 
$$\v\mapsto\B(\v):=\gamma(x)|\v+\mathbf{w}^0|(\v+\mathbf{w}^0).$$   In the following, $\mathcal{L}(\mathrm{H};\mathrm{K})$ denotes the space of all bounded linear operators from $\mathrm{H}$ to $\mathrm{K}$.
\iffalse 
Before going further, let us give the definition of subgradient, subdifferential and Gateaux derivative. 
\begin{definition}[Subgradient, subdifferential]\label{sub} Let $\X$ be a real Banach space and $f:\X\to(-\infty,\infty]$  be a functional on $\X$. A linear functional $u'\in\X'$  is called \emph{subgradient} of $f$ at $u$ if $f(u)\neq +\infty$ and
	$$f(v)\geq f(u)+\langle u',v-u\rangle_{\X'\times \X},$$ holds for all $v \in\X$. The set of all sub-gradients of $f$ at u is called \emph{subdifferential} $\partial f(u)$ of $f$ at $u$.
\end{definition}
We say that $f$ is \emph{Gateaux differentiable} at $u$ in $\X$ if $\partial f(u)$ consists of exactly one element, which we denote by $f'(u)$.  A functional $f : \X \to (-\infty,\infty]$ is called Gateaux-differentiable in $\in\X$,
if
$$f'(u;v) = \lim_{\tau\downarrow 0} \frac{f(u+\tau v)-f(u)}{\tau}=\frac{\d}{\d\tau}f(u+\tau v)\bigg|_{\tau=0},$$
exists for all $v \in\X$. That is, $f'(u;v)$ is said to be the Gateaux-variation of $f$ in $u\in \X$ with respect to $v\in\X$.
Moreover, if there exists $f'(u)\in\X'$ such that
$f'(u;v) = f'(u)(v) =\langle f'(u),v\rangle _{\X'\times\X}$, for all $v\in\X$.   In the sequel, $\mathcal{L}(\mathrm{H};\mathrm{K})$ denotes the space of all bounded linear operators from $\mathrm{H}$ to $\mathrm{K}$.
\fi 

\begin{lemma}[\cite{Manna,SSKB,MTM}]\label{lem2.3}
	The operator $\B$ has the following properties:  For all $\mathbf{u},\v,\mathbf{w}^0\in\mathbb{L}^4(\Omega)$, we have 
	\begin{enumerate}
		\item [(i)] $\|\B(\mathbf{u})\|_{\mathbb{L}^2}\leq \frac{r}{\lambda}(\|\mathbf{u}\|_{\mathbb{L}^4}+\|\mathbf{w}^0\|_{\mathbb{L}^4})^2$,  \item [(ii)] $\B(\cdot)$ is a nonlinear continuous operator from $\mathbb{H}_0^1(\Omega)$ into $\mathbb{H}^{-1}(\Omega)$,
		\item [(iii)] $(\B(\mathbf{u})-\B(\v), \mathbf{u}-\v)_{\mathbb{L}^2} \geq 0$,
		\item [(iv)] $(\B(\mathbf{u}), \mathbf{u})_{\mathbb{L}^2}\geq  -\frac{r}{2\lambda}(\|\mathbf{w}^0\|_{\mathbb{L}^4}^4+\|\mathbf{u}\|_{\mathbb{L}^2}^2)$,
		\item [(v)] $\|\B(\mathbf{u})-\B(\v)\|_{\mathbb{L}^2}\leq \frac{r}{2\lambda}(\|\mathbf{u}\|_{\mathbb{L}^4}+\|\v\|_{\mathbb{L}^4}+\|\mathbf{w}^0\|_{\mathbb{L}^4})\|\mathbf{u}-\v\|_{\mathbb{L}^4}$,
		\item [(vi)] {the operator $\B(\cdot)$ is Fr\'echet differentiable with the Fr\'echet derivative $\B'(\mathbf{u})=2\gamma|\mathbf{u}+\mathbf{w}^0|\in\mathcal{L}(\mathbb{L}^4;\mathbb{L}^2)$ and $(\B'(\mathbf{u})\v,\v)_{\mathbb{L}^2}\geq 0$,} 
		\item [(vii)] $\|\B'(\mathbf{u})\v\|_{\mathbb{H}^{-1}}\leq \frac{2C_{\Omega}r}{\lambda}\left(\|\u\|_{\L^4}+\|\w^0\|_{\L^4}\right)\|\v\|_{\L^4}$.
	\end{enumerate}
\end{lemma}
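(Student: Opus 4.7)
My plan is to verify the seven properties using pointwise estimates on $\B(\u) = \gamma(x)|\u+\mathbf{w}^0|(\u+\mathbf{w}^0)$, combined with H\"older's, Young's and Poincar\'e's inequalities together with the Ladyzhenskaya embedding \eqref{8}, exploiting throughout that $0 < \gamma(x) \le r/\lambda$. Part (i) is immediate from the pointwise bound $|\B(\u)| \le (r/\lambda)|\u+\mathbf{w}^0|^2$ after squaring, integrating, and applying the triangle inequality in $\L^4$. For (iii) the key observation is pointwise monotonicity of $\x \mapsto |\x|\x$ on $\R^2$, verified by
\[
\bigl(|\x|\x - |\y|\y\bigr)\cdot(\x-\y) = |\x|^3 + |\y|^3 - (|\x|+|\y|)\,\x\cdot\y \ge (|\x|+|\y|)(|\x|-|\y|)^2 \ge 0;
\]
setting $\x := \u+\mathbf{w}^0$, $\y := \v+\mathbf{w}^0$ and integrating against the positive weight $\gamma$ yields (iii).

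For (v) I rely on the pointwise Lipschitz estimate $|f(\x) - f(\y)| \le (|\x|+|\y|)|\x-\y|$ for $f(\x) := |\x|\x$, which follows from the algebraic identity $f(\x) - f(\y) = \tfrac{1}{2}(|\x|+|\y|)(\x-\y) + \tfrac{1}{2}(|\x|-|\y|)(\x+\y)$ together with $\bigl||\x|-|\y|\bigr| \le |\x-\y|$; substituting $\x=\u+\mathbf{w}^0$, $\y=\v+\mathbf{w}^0$, bounding $|\u+\mathbf{w}^0|+|\v+\mathbf{w}^0| \le |\u|+|\v|+2|\mathbf{w}^0|$, and invoking H\"older gives (v). Part (ii) then follows as a corollary: if $\u_n \to \u$ in $\H_0^1$, then \eqref{8} implies $\u_n \to \u$ in $\L^4$ with uniformly bounded norms, so (v) forces $\B(\u_n) \to \B(\u)$ in $\L^2$, hence in $\H^{-1}$ via the continuous embedding $\L^2 \hookrightarrow \H^{-1}$. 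For (iv), I write $\u = (\u+\mathbf{w}^0) - \mathbf{w}^0$ so that $(\B(\u),\u)_{\L^2} = \int \gamma|\u+\mathbf{w}^0|^3\d x - \int \gamma|\u+\mathbf{w}^0|(\u+\mathbf{w}^0)\cdot\mathbf{w}^0\d x$; the first integral is nonnegative, and the second is controlled by $\int \gamma|\u+\mathbf{w}^0|^2|\mathbf{w}^0|\d x$, to which I apply Young's inequality and further split $|\u+\mathbf{w}^0|$ via the triangle inequality to extract the stated bound involving $\|\mathbf{w}^0\|_{\L^4}^4$ and $\|\u\|_{\L^2}^2$.

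For (vi), I compute the Gateaux variation by expanding $\B(\u+\tau\v) - \B(\u)$ and letting $\tau \downarrow 0$, identifying the derivative as the bounded linear operator $\v \mapsto \gamma\bigl[|\u+\mathbf{w}^0|\v + \tfrac{(\u+\mathbf{w}^0)\cdot\v}{|\u+\mathbf{w}^0|}(\u+\mathbf{w}^0)\bigr]$ whose pointwise operator norm equals $2\gamma|\u+\mathbf{w}^0|$, justifying the shorthand in the statement; the quadratic form $(\B'(\u)\v,\v)_{\L^2}$ is manifestly nonnegative since each pointwise summand is, and the upgrade from Gateaux to Fr\'echet uses (v) to estimate the remainder. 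For (vii), H\"older yields $\|\B'(\u)\v\|_{\L^2} \le (2r/\lambda)\|\u+\mathbf{w}^0\|_{\L^4}\|\v\|_{\L^4}$; the embedding $\L^2 \hookrightarrow \H^{-1}$ contributes the factor $C_\Omega$ via Poincar\'e, and the triangle inequality $\|\u+\mathbf{w}^0\|_{\L^4} \le \|\u\|_{\L^4}+\|\mathbf{w}^0\|_{\L^4}$ closes the estimate. I expect the main difficulty to be purely bookkeeping rather than conceptual: matching the precise numerical constants in (iv) and (v) requires a careful choice of Young weights rather than any new mathematical ingredient, since the natural application of Young produces $\|\u+\mathbf{w}^0\|_{\L^4}^4$ which must be absorbed into the stated right-hand side through additional convexity estimates.
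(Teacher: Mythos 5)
Your overall plan is sound, and it is worth noting that the paper itself proves only item (vii) (by the same duality argument you describe: $|\langle\B'(\u)\v,\w\rangle|\le\|\B'(\u)\v\|_{\L^2}\|\w\|_{\L^2}$, H\"older in $\L^4\times\L^4\to\L^2$, Poincar\'e, triangle inequality), citing \cite{Manna,SSKB,MTM} for (i)--(vi); so for (i), (ii), (iii), (vi) your pointwise monotonicity and Lipschitz identities are a legitimate self-contained replacement for the citation, and they are correct as written.

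There is, however, a genuine gap in your sketch of (iv). After writing $(\B(\u),\u)_{\L^2}=\int_\Omega\gamma|\u+\w^0|^3\d x-\int_\Omega\gamma|\u+\w^0|(\u+\w^0)\cdot\w^0\d x$ you propose to discard the nonnegative cubic term and bound $\int_\Omega\gamma|\u+\w^0|^2|\w^0|\d x$ by $\frac{r}{2\lambda}(\|\w^0\|_{\L^4}^4+\|\u\|_{\L^2}^2)$ using Young and the triangle inequality. No such bound holds: the cross term $\int|\u|^2|\w^0|\d x$ cannot be controlled by $\|\u\|_{\L^2}^2+\|\w^0\|_{\L^4}^4$ (take $\u$ and $\w^0$ supported on a small set with $|\u|\sim R^2$, $|\w^0|\sim R$ and let $R\to\infty$; pointwise $a^2b\lesssim a^2+b^4$ fails for $a\sim b^2$). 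The cubic term, or rather the sign structure, must be retained: by Cauchy--Schwarz, $(\B(\u),\u)_{\L^2}\ge\int_\Omega\gamma|\u+\w^0|^2\left(|\u+\w^0|-|\w^0|\right)\d x$, the integrand is negative only where $|\u+\w^0|<|\w^0|$, and there one has $\left||\u+\w^0|-|\w^0|\right|\le|\u|$ and $|\u+\w^0|^2<|\w^0|^2$, so the negative contribution is bounded below by $-\int_\Omega\gamma|\w^0|^2|\u|\d x\ge-\frac{r}{2\lambda}(\|\w^0\|_{\L^4}^4+\|\u\|_{\L^2}^2)$, which is exactly (iv). Relatedly, your closing remark that the constants in (iv)--(v) are ``bookkeeping'' is too optimistic for (v): your (correct) identity yields $\frac{r}{\lambda}(\|\u\|_{\L^4}+\|\v\|_{\L^4}+2\|\w^0\|_{\L^4})\|\u-\v\|_{\L^4}$, and no choice of Young weights improves the leading factor to $\frac{r}{2\lambda}$ --- the case $\v=\w^0=\mathbf{0}$, $\gamma\equiv r/\lambda$ shows $\frac{r}{\lambda}$ is sharp --- so the constant quoted in the lemma should be taken from (or checked against) the cited references rather than promised by a refined estimate.
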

\begin{proof}
	We prove only (vii). For $\w\in\H_0^1(\Omega)$, we consider
	\begin{align}
	|	\langle\B'(\mathbf{u})\v,\w\rangle |&\leq\|\B'(\mathbf{u})\v\|_{\mathrm{L}^2}\|\w\|_{\mathrm{L}^2}\leq \frac{2C_{\Omega}r}{\lambda}\|\u+\w^0\|_{\L^4}\|\v\|_{\L^4}\|\w\|_{\H_0^1}\nonumber\\&\leq \frac{2C_{\Omega}r}{\lambda}\left(\|\u\|_{\L^4}+\|\w^0\|_{\L^4}\right)\|\v\|_{\L^4}\|\w\|_{\H_0^1},
	\end{align}
	so that we have $\|\B'(\mathbf{u})\v\|_{\mathbb{H}^{-1}}\leq \frac{2C_{\Omega}r}{\lambda}\left(\|\u\|_{\L^4}+\|\w^0\|_{\L^4}\right)\|\v\|_{\L^4}$. 
\end{proof}
The estimates \eqref{7} and (iii) easily imply the following: 
\begin{lemma}\label{lem23}
	The operator $\mathrm{F}(\mathbf{u}):=\A\mathbf{u}+\B(\mathbf{u})-\f$ is a globally monotone operator, that is, $$\langle\F(\mathbf{u})-\F(\v),\mathbf{u}-\v\rangle \geq 0, \ \text{ for all }\ \mathbf{u},\v\in\mathbb{H}_0^1(\Omega).$$ 
\end{lemma}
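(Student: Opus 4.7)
The plan is to unwind the definition of $\F$ by linearity of the duality pairing, writing
\[
\langle \F(\u)-\F(\v),\u-\v\rangle = \langle \A(\u-\v),\u-\v\rangle + \langle \B(\u)-\B(\v),\u-\v\rangle - \langle \f-\f,\u-\v\rangle,
\]
and observing that the three pieces can be controlled individually by results already in hand. The forcing $\f$ is a fixed element of (a suitable dual space) independent of the argument, so the third term vanishes automatically. For the linear part, I appeal directly to the coercivity identity (\ref{7}): since $\A$ is a bounded linear operator from $\mathbb{H}_0^1(\Omega)$ into $\mathbb{H}^{-1}(\Omega)$, the Gelfand-triple extension gives $\langle \A\w,\w\rangle = \alpha\|\w\|_{\mathbb{H}_0^1}^2$ for every $\w\in\mathbb{H}_0^1(\Omega)$. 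Applying this with $\w=\u-\v$ yields $\langle \A(\u-\v),\u-\v\rangle = \alpha\|\u-\v\|_{\mathbb{H}_0^1}^2\geq 0$.

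For the nonlinear part, I invoke Lemma \ref{lem2.3}(iii), which asserts $(\B(\u)-\B(\v),\u-\v)_{\mathbb{L}^2}\geq 0$. A small point to address is the transition from the $\mathbb{L}^2$-inner product statement to the duality pairing: by Lemma \ref{lem2.3}(ii), $\B$ maps $\mathbb{H}_0^1(\Omega)$ continuously into $\mathbb{H}^{-1}(\Omega)$, and for $\u,\v\in \mathbb{H}_0^1(\Omega)\hookrightarrow\mathbb{L}^4(\Omega)$ (using \eqref{8}), Lemma \ref{lem2.3}(i) shows $\B(\u),\B(\v)\in\mathbb{L}^2(\Omega)$, so the $\mathbb{L}^2$ pairing coincides with the $\mathbb{H}_0^1$–$\mathbb{H}^{-1}$ duality pairing on these elements. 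Hence $\langle \B(\u)-\B(\v),\u-\v\rangle\geq 0$.

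Combining the three contributions gives
\[
\langle \F(\u)-\F(\v),\u-\v\rangle \geq \alpha\|\u-\v\|_{\mathbb{H}_0^1}^2\geq 0,
\]
which is the desired global monotonicity (in fact, strong monotonicity with constant $\alpha$). There is no real obstacle here; the only point requiring a word of care is the identification of the $\mathbb{L}^2$ inner product of Lemma \ref{lem2.3}(iii) with the duality pairing, which is routine since both arguments lie in $\mathbb{L}^2(\Omega)$ under the stated hypotheses.
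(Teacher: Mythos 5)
Your proof is correct and follows exactly the route the paper intends: it sandwiches the statement between the coercivity identity \eqref{7} for $\A$ (applied to $\u-\v$ via the Gelfand triple) and the monotonicity property (iii) of Lemma \ref{lem2.3} for $\B$, with the $\f$-terms cancelling. The extra remark identifying the $\mathbb{L}^2$-inner product with the duality pairing is a harmless routine verification and does not change the argument.
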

\subsection{Global existence and uniqueness} Let us now give the definition of weak solution and discuss the solvability results available in the literature for the uncontrolled system \eqref{1} (see \cite{MK,MK1,Manna,MTM}, etc for more details). 
\begin{definition}\label{weakd}
	The pair $$(\mathbf{u},\xi)\in(\mathrm{C}([0,T];\mathbb{L}^2(\Omega))\cap\mathrm{L}^2(0,T;\mathbb{H}_0^1(\Omega)))\times\mathrm{C}([0,T];\mathrm{L}^2(\Omega)),$$ with $$(\partial_t\mathbf{u},\partial_t \xi)\in\mathrm{L}^2(0,T;\mathbb{H}^{-1}(\Omega))\times\mathrm{L}^2(0,T;\mathrm{L}^2(\Omega)),$$  is called a \emph{weak solution} to the system(\ref{1}), if for $\f\in\mathrm{L}^2(0,T;\mathbb{H}^{-1}(\Omega))$, $(\mathbf{u}_0,\xi_0)\in\mathbb{L}^2(\Omega)\times\mathrm{L}^2(\Omega)$ and $(\v,\eta)\in\mathbb{H}_0^1(\Omega)\times\mathrm{L}^2(\Omega)$, $(\mathbf{u},\xi)$ satisfies:
	\begin{equation}\label{3.13}
	\left\{
	\begin{aligned}
	\langle\partial_t\mathbf{u}(t)+\A\mathbf{u}(t)+\B(\mathbf{u}(t))+\nabla \xi (t),\v\rangle&=\langle\f(t),\v\rangle,\\
	(\partial_t\xi(t)+\mathrm{div}(h\mathbf{u}(t)),\eta(t))_{\mathrm{L}^2}&=0,\\
	\lim_{t\downarrow 0}\int_{\Omega}\mathbf{u}(t,x)\v(x)\d x&=\int_{\Omega}\mathbf{u}_0(x)\v(x)\d x, \\ \lim_{t\downarrow 0}\int_{\Omega}\xi(t,x)\eta(x)\d x&=\int_{\Omega}\xi_0(x)\eta(x)\d x,
	\end{aligned}
	\right.
	\end{equation}
	and the energy equality 
	\begin{align}
	\frac{\d}{\d t}\left(\|\sqrt{h}\mathbf{u}(t)\|^2_{\mathbb{L}^2}+\|\xi(t)\|_{\mathrm{L}^2}^2\right)+2\langle\mathrm{F}(\mathbf{u}(t)),h\mathbf{u}(t)\rangle =0, 
	\end{align}
	for a.e. $t\in[0,T]$. 
\end{definition}

\begin{theorem}[Existence and uniqueness of weak solution, Chapter 2, \cite{MK}, Propositions 3.6, 3.7 \cite{Manna}]\label{weake} Let  $(\mathbf{u}_0,\xi_0)\in\mathbb{L}^2(\Omega)\times\mathrm{L}^2(\Omega)$ be given.  For $\f\in\mathrm{L}^2(0,T;\mathbb{H}^{-1}(\Omega))$, there exists a unique weak solution $(\mathbf{u},\xi)$ to the uncontrolled system (\ref{1}) satisfying 
	\begin{align}\label{energy}
	&\sup_{t\in[0,T]}(\|\u(t)\|_{\mathbb{L}^2}^2+\|\xi(t)\|_{\mathrm{L}^2}^2)+\alpha\int_0^T\|\nabla\u(t)\|_{\mathbb{L}^2}^2\d t\nonumber\\&\leq \left(\|\u_0\|_{\mathbb{L}^2}^2+\|\xi_0\|_{\mathrm{L}^2}^2+\frac{r}{\lambda}\int_0^T\|\w^0(t)\|_{\mathbb{L}^4}^4\d t+\int_0^T\|\f(t)\|_{\H^{-1}}^2\d t\right)e^{KT},
	\end{align}
	where $K=\max\left\{1+M+\frac{r}{\lambda},\frac{2}{\alpha}(1+\mu^2)+M\right\}$. 
\end{theorem}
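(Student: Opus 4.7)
The plan is to construct a weak solution by a Faedo--Galerkin approximation, close the scheme with a weighted energy identity that produces \eqref{energy} directly, and pass to the limit using Aubin--Lions together with the polynomial structure of $\B$; uniqueness will then follow from the monotonicity recorded in Lemma~\ref{lem2.3}(iii). Concretely, I fix orthonormal bases $\{\mathbf{e}_k\}\subset\mathbb{H}_0^1(\Omega)$ of $\mathbb{L}^2(\Omega)$ and $\{\phi_k\}\subset\mathrm{H}^1(\Omega)$ of $\mathrm{L}^2(\Omega)$, seek Galerkin approximations $\mathbf{u}_n(t)=\sum_{k=1}^n c^n_k(t)\mathbf{e}_k$ and $\xi_n(t)=\sum_{k=1}^n d^n_k(t)\phi_k$, and project \eqref{1} onto the span of the first $n$ modes. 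Because $\B$ is polynomial in $\mathbf{u}$, the resulting ODE system in $(c^n,d^n)$ has locally Lipschitz coefficients and admits a local-in-time solution by Picard--Lindel\"of; the a priori bound below extends it to the whole of $[0,T]$.

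Next, I would test the first projected equation against $h\mathbf{u}_n$ and the second against $\xi_n$. The crucial cancellation
\[(\nabla\xi_n,h\mathbf{u}_n)_{\mathbb{L}^2}+(\mathrm{div}(h\mathbf{u}_n),\xi_n)_{\mathrm{L}^2}=0\]
holds by integration by parts, because $\mathbf{u}_n$ vanishes on $\partial\Omega$, and the Coriolis contribution disappears pointwise from $\mathbf{k}\times\mathbf{u}\perp\mathbf{u}$. Expanding $\nabla(h\mathbf{u}_n)=h\,\nabla\mathbf{u}_n+\mathbf{u}_n\otimes\nabla h$ and absorbing the $\nabla h$ cross-term by Young's inequality---with constants read off from \eqref{hes}---yields a lower bound on $\langle\A\mathbf{u}_n,h\mathbf{u}_n\rangle$ that controls $\alpha\|\nabla\mathbf{u}_n\|_{\mathbb{L}^2}^2$ modulo an $\|\mathbf{u}_n\|_{\mathbb{L}^2}^2$ remainder. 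Lemma~\ref{lem2.3}(iv) produces the $\tfrac{r}{\lambda}\|\mathbf{w}^0\|_{\mathbb{L}^4}^4$ forcing, while $\langle\mathbf{f},h\mathbf{u}_n\rangle$ is estimated by $\|\mathbf{f}\|_{\mathbb{H}^{-1}}^2$ plus a small multiple of $\|\nabla\mathbf{u}_n\|_{\mathbb{L}^2}^2$. Rearranging and applying Gr\"onwall's inequality, together with $\lambda\|\mathbf{u}\|_{\mathbb{L}^2}^2\le\|\sqrt{h}\mathbf{u}\|_{\mathbb{L}^2}^2\le\mu\|\mathbf{u}\|_{\mathbb{L}^2}^2$, delivers \eqref{energy}; the explicit constant $K$ is simply the output of collecting the Poincar\'e, Young and weight constants produced above.

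For the passage to the limit I would estimate $\partial_t\mathbf{u}_n$ in $\mathrm{L}^2(0,T;\mathbb{H}^{-1})$ from the Galerkin equation, using Lemma~\ref{lem2.3}(i) together with \eqref{8} for the nonlinear term, and $\partial_t\xi_n$ in $\mathrm{L}^2(0,T;\mathrm{L}^2)$ from the continuity equation. Aubin--Lions then yields strong convergence $\mathbf{u}_n\to\mathbf{u}$ in $\mathrm{L}^2(0,T;\mathbb{L}^2)$; interpolating with the compact embedding $\mathbb{H}_0^1\hookrightarrow\mathbb{L}^4$ from \eqref{8} upgrades this to $\mathrm{L}^2(0,T;\mathbb{L}^4)$, and by Lemma~\ref{lem2.3}(v) I can identify the limit of $\B(\mathbf{u}_n)$ with $\B(\mathbf{u})$ in $\mathrm{L}^1(0,T;\mathbb{L}^2)$. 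This nonlinear passage is the main obstacle, since $\B$ is not weakly continuous on $\mathbb{H}_0^1$; a backup route would be Minty's monotonicity trick, exploiting the global monotonicity of $\F=\A+\B-\mathbf{f}$ already recorded after Lemma~\ref{lem2.3}. Initial conditions are recovered from the standard $\mathrm{C}([0,T];\mathbb{L}^2)\times\mathrm{C}([0,T];\mathrm{L}^2)$ embedding of the solution space.

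For uniqueness, given two weak solutions $(\mathbf{u}_i,\xi_i)$, $i=1,2$, I would set $\bar{\mathbf{u}}=\mathbf{u}_1-\mathbf{u}_2$, $\bar\xi=\xi_1-\xi_2$, test the difference of the first equations against $h\bar{\mathbf{u}}$ and the difference of the second against $\bar\xi$. The pressure-like terms cancel exactly as above, Lemma~\ref{lem2.3}(iii) (which remains valid with the strictly positive weight $h$, since the inequality holds pointwise in the integrand) kills the drag contribution, and the $\nabla h$ cross-term from $\langle\A\bar{\mathbf{u}},h\bar{\mathbf{u}}\rangle$ is again handled by Young. Gr\"onwall applied with zero initial data then forces $\bar{\mathbf{u}}\equiv\mathbf{0}$ and $\bar\xi\equiv 0$, completing the proof.
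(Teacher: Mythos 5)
A preliminary remark on the comparison: the paper does not prove Theorem \ref{weake} at all — it is quoted from Chapter 2 of \cite{MK} and Propositions 3.6--3.7 of \cite{Manna} — so your proposal can only be measured against that cited literature and against the analogous computations the paper actually carries out for the linearized system in \eqref{216}--\eqref{219}. Your overall architecture (Faedo--Galerkin approximation, a priori bounds, time-derivative estimates, Aubin--Lions compactness, identification of the limit of $\B(\mathbf{u}_n)$ through strong $\mathrm{L}^2(0,T;\mathbb{L}^4)$ convergence with Minty's trick as a fallback, and uniqueness via the monotonicity in Lemma \ref{lem2.3}(iii)) is exactly the standard route of \cite{Manna} and is sound in outline.

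There is, however, a concrete gap in your energy-estimate step. You test the \emph{projected} momentum equation against $h\mathbf{u}_n$, but the Galerkin identity only holds against test functions in $\mathrm{span}\{\mathbf{e}_1,\dots,\mathbf{e}_n\}$, and $h\mathbf{u}_n=\sum_{k=1}^n c_k^n(t)\,h\mathbf{e}_k$ does not lie in that span for variable depth $h$. Consequently the cancellation $(\nabla\xi_n,h\mathbf{u}_n)_{\mathbb{L}^2}+(\mathrm{div}(h\mathbf{u}_n),\xi_n)_{\mathrm{L}^2}=0$ is not available at the approximation level as written; the weighted energy identity of Definition \ref{weakd} can only be recovered a posteriori for the limit solution. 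The repair is to argue as the paper does for the linearized problem: test with $\mathbf{u}_n$ and $\xi_n$ themselves, estimate the two coupling terms separately using $|\langle\nabla\xi_n,\mathbf{u}_n\rangle|\leq\sqrt{2}\,\|\xi_n\|_{\mathrm{L}^2}\|\nabla\mathbf{u}_n\|_{\mathbb{L}^2}$ and $\|\mathrm{div}(h\mathbf{u}_n)\|_{\mathrm{L}^2}\leq\sqrt{2}\,\mu\|\nabla\mathbf{u}_n\|_{\mathbb{L}^2}+M\|\mathbf{u}_n\|_{\mathbb{L}^2}$ (constants from \eqref{hes}), and handle the drag with Lemma \ref{lem2.3}(iv); this unweighted computation is also what produces constants of the form advertised in \eqref{energy}, namely $\frac{r}{\lambda}$, $\alpha$ and $K=\max\{1+M+\frac{r}{\lambda},\frac{2}{\alpha}(1+\mu^2)+M\}$. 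Your weighted computation — note that with the weight one gets $h\gamma\equiv r$ in the drag term and only $\alpha\lambda\|\nabla\mathbf{u}_n\|_{\mathbb{L}^2}^2$ from the viscous term — yields, after converting $\|\sqrt{h}\mathbf{u}\|_{\mathbb{L}^2}^2$ back via $\lambda\leq h\leq\mu$, an estimate of the same type but with different constants, so it does not give \eqref{energy} verbatim. Two smaller points: $\B$ is not polynomial (it involves $|\mathbf{u}+\mathbf{w}^0|$); what your Picard--Lindel\"of step and limit passage actually require is the local Lipschitz/quadratic-growth structure of Lemma \ref{lem2.3}(v) and (i), which fortunately suffices. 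Your uniqueness argument with the weight $h$ is fine, since there $h(\mathbf{u}_1-\mathbf{u}_2)(t)\in\mathbb{H}_0^1(\Omega)$ is a legitimate test function for the weak formulation (no Galerkin span restriction), and the pointwise monotonicity of $\mathbf{v}\mapsto|\mathbf{v}|\mathbf{v}$ indeed survives multiplication by $h>0$.
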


\begin{remark}
	(1). {In order to obtain the regularity $\f\in\mathrm{L}^2(0,T;\mathbb{H}^{-1}(\Omega))$, one has to take $\g\in\mathrm{L}^2(0,T;\H^{-1}(\Omega))$ and $\w^0\in\mathrm{L}^{\infty}(0,T;\L^2(\Omega))\cap\mathrm{L}^2(0,T;\H^1(\Omega))$ with $\partial_t\w^0\in\mathrm{L}^2(0,T;\H^{-1}(\Omega))$ (see \eqref{5}). }
	\vskip 0.2 cm
	(2).  Using Ladyzhenskaya's inequality (see \eqref{8}), we obtain 
	\begin{align}\label{29}
	\int_0^T\|\u(t)\|_{\L^4}^4\d t\leq 2\sup_{t\in[0,T]}\|\u(t)\|_{\L^2}^2\int_0^T\|\nabla\u(t)\|_{\L^2}^2\d t<+\infty,
	\end{align}
	and hence $\u\in\mathrm{L}^4(0,T;\L^4(\Omega))$. 
\end{remark}

\subsection{The linearized system} Let us linearize the equations \eqref{1} around $(\widehat{\u}, \widehat{\xi})$ which is the \emph{unique weak solution} of system (\ref{1}) with the control term $\U=\mathbf{0}$ (uncontrolled system), external forcing $\widetilde{\mathbf{g}}$, and initial datum $\widehat{\u}_0$ and $\widehat{\xi}_0$ are such that $\widetilde{\mathbf{g}}\in\mathrm{L}^2(0,T;\mathbb{H}^{-1}(\Omega))$ and $(\mathbf{u}_0,\xi_0)\in\mathbb{L}^2(\Omega)\times\mathrm{L}^2(\Omega)$. We consider the following linearized system in the abstract formulation:
\begin{equation}\label{4}
\left\{
\begin{aligned}
\frac{\partial \mathfrak{w}(t)}{\partial t}+\A\mathfrak{w}(t)+\B'(\widehat{\mathbf{u}}(t))\mathfrak{w}(t)+\nabla \eta (t)&=\widehat{\mathbf{g}}(t)+\U(t),\ \text{ in }\ \H^{-1}(\Omega),\\
\frac{\partial \eta(t)}{\partial t}+\mathrm{div}(h \mathfrak{w}(t))&=0, \ \text{ in }\ \mathrm{L}^2(\Omega),\\
(\mathfrak{w}(0), \eta(0))&=(\mathfrak{w}_0,\eta_0)\in\L^2(\Omega)\times\mathrm{L}^2(\Omega), 
\end{aligned}
\right.
\end{equation}
for a.e. $t\in[0,T]$,  where $\widehat{\g}={\g}-\widetilde{\g}$, $\B'(\widehat{\mathbf{u}})=2\gamma|\widehat{\u}+\w^0|$ and $\mathfrak{w}(x,t)=\mathbf{0}$, for a.e. $(x,t)\in\partial\Omega\times[0,T]$.  Let us now prove an a-priori energy estimate satisfied by the system \eqref{4}. We take an inner product with $\mathfrak{w}(\cdot)$ to the first equation in \eqref{4} to obtain 
\begin{align}\label{216}
&\frac{1}{2}\frac{\d }{\d t}\|\mathfrak{w}(t)\|_{\mathbb{L}^2}^2+\alpha\|\nabla\mathfrak{w}(t)\|_{\mathbb{L}^2}^2\nonumber\\&=-(\B'(\widehat{\mathbf{u}}(t))\mathfrak{w}(t),\mathfrak{w}(t))_{\mathbb{L}^2}-\langle\nabla \eta (t),\mathfrak{w}(t)\rangle+\langle\widehat{\mathbf{g}}(t),\mathfrak{w}(t)\rangle+\langle\U(t),\mathfrak{w}(t)\rangle. 
\end{align}
Using an integration by parts,  the fact that $\|\mathrm{div\ }\u\|_{\mathrm{L}^2}\leq \sqrt{2}\|\nabla\u\|_{\mathbb{L}^2}$, Cauchy-Schwarz's  and Young's inequalities, we estimate the final three terms from the right hand side of the inequality \eqref{216} as 
\begin{align} 
-&\langle\nabla \eta ,\mathfrak{w}\rangle+\langle\widehat{\mathbf{g}},\mathfrak{w}\rangle+\langle\U,\mathfrak{w}\rangle  \nonumber\\&\leq (\eta,\mathrm{div\ }\mathfrak{w})_{\mathrm{L}^2}+\langle\widehat{\mathbf{g}},\mathfrak{w}\rangle +\langle\U,\mathfrak{w}\rangle\nonumber\\&\leq \|\eta\|_{\mathrm{L}^2}\|\mathrm{div\ }\mathfrak{w}\|_{\mathrm{L}^2}+\|\widehat{\mathbf{g}}\|_{\mathbb{H}^{-1}}\|\nabla\mathfrak{w}\|_{\mathbb{L}^2}+\|\U\|_{\H^{-1}}\|\nabla\mathfrak{w}\|_{\mathbb{L}^2}\nonumber\\&\leq \sqrt{2} \|\eta\|_{\mathrm{L}^2}\|\nabla\mathfrak{w}\|_{\L^2}+\frac{\alpha}{4}\|\nabla\mathfrak{w}\|_{\mathbb{L}^2}^2+\frac{2}{\alpha}\|\widehat{\mathbf{g}}\|_{\mathbb{H}^{-1}}^2+\frac{2}{\alpha}\|\U\|_{\mathbb{H}^{-1}}^2\nonumber\\&\leq \frac{3\alpha}{8}\|\nabla\mathfrak{w}\|_{\L^2}^2+\frac{4}{\alpha}\|\eta\|_{\mathrm{L}^2}^2+\frac{2}{\alpha}\|\widehat{\mathbf{g}}\|_{\mathbb{H}^{-1}}^2+\frac{2}{\alpha}\|\U\|_{\mathbb{H}^{-1}}^2.
\end{align}
Let us now take inner product with $\eta(\cdot)$ to the second equation in \eqref{4} to get 
\begin{align}\label{217}
\frac{1}{2}\frac{\d}{\d t}\|\eta(t)\|_{\mathrm{L}^2}^2&=-(\mathrm{div}(h\mathfrak{w}(t)),\eta(t))_{\mathrm{L}^2}\leq \|\mathrm{div}(h\mathfrak{w}(t))\|_{\mathrm{L}^2}\|\eta(t)\|_{\mathrm{L}^2}\nonumber\\&=\|h\mathrm{div\ }\mathfrak{w}(t)+\nabla h\cdot\mathfrak{w}(t)\|_{\mathrm{L}^2}\|\eta(t)\|_{\mathrm{L}^2}\nonumber\\&\leq \left( \|h\mathrm{div\ }\mathfrak{w}(t)\|_{\mathrm{L}^2}+\|\nabla h\cdot\mathfrak{w}(t)\|_{\mathrm{L}^2}\right)\|\eta(t)\|_{\mathrm{L}^2}\nonumber\\&\leq \left(\|h\|_{\mathrm{L}^{\infty}}\|\mathrm{div\ }\mathfrak{w}(t)\|_{\mathrm{L}^2}+\|\nabla h\|_{\mathbb{L}^{\infty}}\|\mathfrak{w}(t)\|_{\mathbb{L}^2}\right)\|\eta(t)\|_{\mathrm{L}^2}\nonumber\\&\leq \sqrt{2}\mu\|\nabla\mathfrak{w}(t)\|_{\mathbb{L}^2}\|\eta(t)\|_{\mathrm{L}^2}+M\|\mathfrak{w}(t)\|_{\mathbb{L}^2}\|\eta(t)\|_{\mathrm{L}^2}\nonumber\\&\leq \frac{\alpha}{8}\|\nabla\mathfrak{w}(t)\|_{\mathbb{L}^2}^2+\frac{M}{2}\|\mathfrak{w}(t)\|_{\mathbb{L}^2}^2+\left(\frac{4\mu^2}{\alpha}+\frac{M}{2}\right)\|\eta(t)\|_{\mathrm{L}^2}^2,
\end{align}
where we used H\"older's and Young's inequalities. Combining \eqref{216} and \eqref{217}, we find 
\begin{align*}
&\frac{\d }{\d t}\left(\|\mathfrak{w}(t)\|_{\mathbb{L}^2}^2+\|\eta(t)\|_{\mathrm{L}^2}^2\right)+\alpha\|\nabla\mathfrak{w}(t)\|_{\mathbb{L}^2}^2\nonumber\\&\leq -2(\B'(\widehat{\mathbf{u}}(t))\mathfrak{w}(t),\mathfrak{w}(t))_{\mathbb{L}^2}+ M\|\mathfrak{w}(t)\|_{\mathbb{L}^2}^2+\left(\frac{8(\mu^2+1)}{\alpha}+M\right)\|\eta(t)\|_{\mathrm{L}^2}^2\nonumber\\&\quad+\frac{4}{\alpha}\|\widehat{\mathbf{g}}(t)\|_{\mathbb{H}^{-1}}^2+\frac{4}{\alpha}\|\U(t)\|_{\mathbb{H}^{-1}}^2.
\end{align*}
Integrating the above inequality from $0$ to $t$, we obtain 
\begin{align}\label{218}
&\|\mathfrak{w}(t)\|_{\mathbb{L}^2}^2+\|\eta(t)\|_{\mathrm{L}^2}^2+\alpha\int_0^t\|\nabla\mathfrak{w}(s)\|_{\mathbb{L}^2}^2\d s+4\gamma\int_0^t\||\u(s)+\w^0(s)|^{\frac{1}{2}}\mathfrak{w}(s)\|_{\L^2}^2\d s\nonumber\\&\leq \|\mathfrak{w}_0\|_{\mathbb{L}^2}^2+\|\eta_0\|_{\mathrm{L}^2}^2+\left(\frac{8(\mu^2+1)}{\alpha}+M\right)\int_0^t\left(\|\mathfrak{w}(s)\|_{\mathbb{L}^2}^2+\|\eta(s)\|_{\mathrm{L}^2}^2\right)\d s\nonumber\\&\quad +\frac{4}{\alpha}\int_0^t\|\widehat{\mathbf{g}}(s)\|_{\mathbb{H}^{-1}}^2\d s+\frac{4}{\alpha}\int_0^t\|\U(s)\|_{\mathbb{H}^{-1}}^2\d s,
\end{align}
where we used the fact that 
\begin{align*}
{\int_0^t(\B'(\widehat{\mathbf{u}}(t))\mathfrak{w}(t),\mathfrak{w}(t))_{\mathbb{L}^2}\d t=2\int_0^t\int_{\Omega}\gamma(x)|\widehat{\u}(t,x)+\w^0(t,x)||\mathfrak{w}(t,x)|^2\d x\d t\geq 0,} 
\end{align*}
for all $t\in[0,T]$. An application of Gornwall's inequality in \eqref{218} yields 
\begin{align}\label{219}
&\|\mathfrak{w}(t)\|_{\mathbb{L}^2}^2+\|\eta(t)\|_{\mathrm{L}^2}^2\nonumber\\&\leq \left( \|\mathfrak{w}_0\|_{\mathbb{L}^2}^2+\|\eta_0\|_{\mathrm{L}^2}^2+\frac{4}{\alpha}\int_0^T\|\widehat{\mathbf{g}}(t)\|_{\mathbb{H}^{-1}}^2\d t+\frac{4}{\alpha}\int_0^T\|\U(t)\|_{\mathbb{H}^{-1}}^2\d t\right)e^{\left(\frac{8(\mu^2+1)}{\alpha}+M\right)T},
\end{align}
for all $t\in[0,T]$. Thus, from \eqref{218}, it is immediate that 
\begin{align*}
&\sup_{t\in[0,T]}\left(\|\mathfrak{w}(t)\|_{\mathbb{L}^2}^2+\|\eta(t)\|_{\mathrm{L}^2}^2\right)+\alpha\int_0^T\|\nabla\mathfrak{w}(t)\|_{\mathbb{L}^2}^2\d t\nonumber\\&\leq  \left( \|\mathfrak{w}_0\|_{\mathbb{L}^2}^2+\|\eta_0\|_{\mathrm{L}^2}^2+\frac{4}{\alpha}\int_0^T\|\widehat{\mathbf{g}}(t)\|_{\mathbb{H}^{-1}}^2\d t+\frac{4}{\alpha}\int_0^T\|\U(t)\|_{\mathbb{H}^{-1}}^2\d t\right)e^{2\left(\frac{8(\mu^2+1)}{\alpha}+M\right)T}. 
\end{align*}

Let us now obtain the  estimates of time derivatives. For any $\v\in\mathrm{L}^2(0,T;\H_0^1(\Omega))$, we have 
\begin{align}\label{213}
&\left|\int_0^T\langle\partial_t\mathfrak{w}(t),\v(t)\rangle\d t\right|\nonumber\\&\leq\int_0^T|\langle\A\mathfrak{w}(t),\v(t)\rangle|\d t+\int_0^T|\langle\B'(\widehat{\mathbf{u}}(t))\mathfrak{w}(t),\v(t)\rangle|\d t+\int_0^T|\langle \nabla \eta (t),\v(t)\rangle|\d t\nonumber\\&\quad+\int_0^T|\langle\widehat{\mathbf{g}}(t),\v(t)\rangle|\d t+\int_0^T|\langle\U(t),\v(t)\rangle|\d t\nonumber\\&\leq\alpha\int_0^T\|\nabla\mathfrak{w}(t)\|_{\L^2}\|\nabla\v(t)\|_{\L^2}\d t+\beta\int_0^T\|\mathfrak{w}(t)\|_{\L^2}\|\v(t)\|_{\L^2}\d t\nonumber\\&\quad+\frac{2C_{\Omega}r}{\lambda}\int_0^T\left(\|\u(t)\|_{\L^4}+\|\w^0(t)\|_{\L^4}\right)\|\mathfrak{w}(t)\|_{\L^4}\|\nabla\v(t)\|_{\L^2}\d t\nonumber\\&\quad+\int_0^T\|\eta(t)\|_{\mathrm{L}^2}\|\mathrm{div\ }\v(t)\|_{\mathrm{L}^2}\d t+\int_0^T\|\widehat{\g}(t)\|_{\H^{-1}}\|\nabla\v(t)\|_{\L^2}\d t\nonumber\\&\quad+\int_0^T\|\U(t)\|_{\H^{-1}}\|\nabla\v(t)\|_{\L^2}\d t\nonumber\\&\leq\Bigg\{\alpha\left(\int_0^T\|\nabla\mathfrak{w}(t)\|_{\L^2}^2\d t\right)^{1/2}+\sqrt{2}\left(\int_0^T\|\eta(t)\|_{\mathrm{L}^2}^2\d t\right)^{1/2}\nonumber\\&\quad+\beta C_{\Omega}\left(\int_0^T\|\mathfrak{w}(t)\|_{\L^2}^2\d t\right)^{1/2}\nonumber\\&\quad+\frac{4C_{\Omega}r}{\lambda}\left(\int_0^T\|\u(t)\|_{\L^4}^4\d t+\int_0^T\|\w^0(t)\|_{\L^4}^4\d t\right)^{1/4}\left(\int_0^T\|\mathfrak{w}(t)\|_{\L^4}^4\d t\right)^{1/4}\nonumber\\&\quad+\left(\int_0^T\|\widehat{\g}(t)\|_{\H^{-1}}^2\d t\right)^{1/2}+\left(\int_0^T\|\U(t)\|_{\H^{-1}}^2\d t\right)^{1/2} \Bigg\}\left(\int_0^T\|\nabla\v(t)\|_{\L^2}\d t\right)^{1/2},
\end{align}
where we used \eqref{8}, \eqref{29}, Lemma \ref{lem2.3} (vii), H\"older's and Young's inequalities. Thus, we obtain  $\|\partial_t\mathfrak{w}\|_{\mathrm{L}^2(0,T;\H^{-1}(\Omega))}<+\infty$. Similarly, for every $\upsilon\in\mathrm{L}^2(0,T;\mathrm{L}^2(\Omega))$, we have 
\begin{align}\label{214}
&\left|\int_0^T(\partial_t\eta(t),\upsilon(t))_{\mathrm{L}^2}\d t\right|\nonumber\\&\leq  \int_0^T|(\mathrm{div}(h\mathfrak{w}(t)),\upsilon(t))_{\mathrm{L^2}}|\d t\\&\leq \bigg[M\left(\int_0^T\|\mathfrak{w}(t)\|_{\L^2}^2\d t\right)^{1/2}+\sqrt{2}\mu\left(\int_0^T\|\nabla\mathfrak{w}(t)\|_{\L^2}^2\d t\right)^{1/2}\bigg]\left(\int_0^T\|\upsilon(t)\|_{\mathrm{L}^2}^2\d t\right)^{1/2}, \nonumber
\end{align}
so that $\|\partial_t\eta\|_{\mathrm{L}^2(0,T;\mathrm{L}^2(\Omega))}<+\infty$. Moreover, using a standard Faedo-Galerkin approximation technique and the Banach-Alaoglu theorem (which ensures the weakly convergent subsequences),  we have the following existence and uniqueness Theorem. As the system is linear, the estimate \eqref{219} easily implies the uniqueness of weak solution. 
\begin{theorem}\label{linear}
	Let  $(\mathfrak{w}_0,\eta_0)\in\mathbb{L}^2(\Omega)\times\mathrm{L}^2(\Omega)$ be given.  For $\widehat{\g}\in\mathrm{L}^2(0,T;\mathbb{H}^{-1}(\Omega))$ and $\U\in\mathrm{L}^2(0,T;\mathbb{H}^{-1}(\Omega))$,
	there exists \emph{a unique weak solution} to the system \eqref{4} satisfying 
	$$(\mathfrak{w},\eta)\in(\mathrm{C}([0,T];\mathbb{L}^2(\Omega))\cap\mathrm{L}^2(0,T;\mathbb{H}_0^1(\Omega)))\times\mathrm{C}([0,T];\mathrm{L}^2(\Omega)),$$ with $$(\partial_t\mathfrak{w},\partial_t \eta)\in\mathrm{L}^2(0,T;\mathbb{H}^{-1}(\Omega))\times\mathrm{L}^2(0,T;\mathrm{L}^2(\Omega)).$$
\end{theorem}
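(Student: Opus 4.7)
The plan is to implement the Faedo--Galerkin scheme alluded to in the statement. Fix an orthonormal basis $\{\mathbf{e}_k\}_{k\geq 1}$ of $\mathbb{L}^2(\Omega)$ consisting of eigenfunctions of the Dirichlet Laplacian (hence also orthogonal in $\mathbb{H}_0^1(\Omega)$) and an orthonormal basis $\{f_k\}_{k\geq 1}$ of $\mathrm{L}^2(\Omega)$, and let $P_n$, $Q_n$ be the $\mathbb{L}^2$- and $\mathrm{L}^2$-orthogonal projections onto $V_n := \mathrm{span}\{\mathbf{e}_1,\ldots,\mathbf{e}_n\}$ and $W_n := \mathrm{span}\{f_1,\ldots,f_n\}$. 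I would seek approximate solutions
\[
\mathfrak{w}^n(t) = \sum_{k=1}^{n} a_k^n(t)\,\mathbf{e}_k, \qquad \eta^n(t) = \sum_{k=1}^{n} b_k^n(t)\,f_k,
\]
obeying the projection of \eqref{4} onto $V_n \times W_n$ with initial data $P_n \mathfrak{w}_0$ and $Q_n \eta_0$. The resulting system for $(a^n,b^n)$ is a \emph{linear} ODE in $\mathbb{R}^{2n}$ whose coefficients depend measurably on $t$ through $\widehat{\u}(t)$ (only via the multiplier $\B'(\widehat{\u})$) and through $\widehat{\g},\U$; these coefficients are integrable on $[0,T]$ by Lemma \ref{lem2.3}(vii) combined with $\widehat{\u}\in \mathrm{L}^2(0,T;\mathbb{H}_0^1(\Omega))$, so the Carath\'eodory theorem yields a unique absolutely continuous global solution on $[0,T]$.

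Testing the projected first equation with $\mathfrak{w}^n$ and the projected second equation with $\eta^n$ reproduces verbatim the computations \eqref{216}--\eqref{219} at the Galerkin level, where $(\B'(\widehat{\u})\mathfrak{w}^n,\mathfrak{w}^n)_{\mathbb{L}^2}\geq 0$ is discarded using Lemma \ref{lem2.3}(vi). This yields, uniformly in $n$, bounds on $\mathfrak{w}^n$ in $\mathrm{L}^{\infty}(0,T;\mathbb{L}^2(\Omega))\cap \mathrm{L}^2(0,T;\mathbb{H}_0^1(\Omega))$ and on $\eta^n$ in $\mathrm{L}^{\infty}(0,T;\mathrm{L}^2(\Omega))$, while the computations \eqref{213}--\eqref{214} supply uniform bounds on $\partial_t \mathfrak{w}^n$ in $\mathrm{L}^2(0,T;\mathbb{H}^{-1}(\Omega))$ and on $\partial_t \eta^n$ in $\mathrm{L}^2(0,T;\mathrm{L}^2(\Omega))$.

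By Banach--Alaoglu, along a subsequence $\mathfrak{w}^n \rightharpoonup \mathfrak{w}$ in $\mathrm{L}^2(0,T;\mathbb{H}_0^1(\Omega))$, $\mathfrak{w}^n \stackrel{\ast}{\rightharpoonup} \mathfrak{w}$ in $\mathrm{L}^{\infty}(0,T;\mathbb{L}^2(\Omega))$, $\eta^n \stackrel{\ast}{\rightharpoonup} \eta$ in $\mathrm{L}^{\infty}(0,T;\mathrm{L}^2(\Omega))$, together with the corresponding weak convergences of the time derivatives. The Aubin--Lions lemma applied with the triple $\mathbb{H}_0^1\hookrightarrow\hookrightarrow \mathbb{L}^2\hookrightarrow \mathbb{H}^{-1}$ upgrades $\mathfrak{w}^n \to \mathfrak{w}$ to strong convergence in $\mathrm{L}^2(0,T;\mathbb{L}^2(\Omega))$. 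Passing to the limit is then routine because \eqref{4} is fully linear in $(\mathfrak{w},\eta)$: $\A$, $\nabla$ and $\mathrm{div}(h\,\cdot)$ are bounded linear maps between the relevant spaces, and the reaction term is handled using that $\mathfrak{z}\mapsto \B'(\widehat{\u})\mathfrak{z}$ is a bounded linear operator from $\mathrm{L}^2(0,T;\mathbb{H}_0^1(\Omega))$ into $\mathrm{L}^2(0,T;\mathbb{H}^{-1}(\Omega))$ by Lemma \ref{lem2.3}(vii) combined with the 2D embedding $\mathbb{H}_0^1 \hookrightarrow \mathbb{L}^4$ and $\widehat{\u}+\w^0\in\mathrm{L}^4(0,T;\mathbb{L}^4(\Omega))$ (Lemma \ref{lady}); such an operator is weakly continuous, so $\B'(\widehat{\u})\mathfrak{w}^n \rightharpoonup \B'(\widehat{\u})\mathfrak{w}$, which suffices to identify the limit in the distributional weak formulation. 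The initial conditions are recovered by testing against functions with non-vanishing trace at $t=0$.

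Uniqueness is immediate from linearity: the difference of two weak solutions satisfies \eqref{4} with zero initial data, zero $\widehat{\g}$ and zero $\U$, so the energy estimate \eqref{219}---whose only sign-sensitive step uses $(\B'(\widehat{\u})\mathfrak{z},\mathfrak{z})_{\mathbb{L}^2}\geq 0$, which is preserved---together with Gr\"onwall's inequality forces the difference to vanish. The continuity $\mathfrak{w}\in \mathrm{C}([0,T];\mathbb{L}^2(\Omega))$ follows from the Lions--Magenes embedding $\{\mathbf{z}\in \mathrm{L}^2(0,T;\mathbb{H}_0^1(\Omega)) : \partial_t\mathbf{z}\in \mathrm{L}^2(0,T;\mathbb{H}^{-1}(\Omega))\} \hookrightarrow \mathrm{C}([0,T];\mathbb{L}^2(\Omega))$, while $\eta \in \mathrm{C}([0,T];\mathrm{L}^2(\Omega))$ is a consequence of $\eta \in \mathrm{H}^1(0,T;\mathrm{L}^2(\Omega))$. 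The only real hurdle I anticipate is confirming global-in-time existence of the Galerkin ODE given only measurable-in-$t$ coefficients; past that, linearity of \eqref{4} eliminates the nonlinear compactness arguments typically needed for Navier--Stokes-type systems, so every remaining step is a standard application of weak convergence.
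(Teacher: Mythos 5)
Your proposal is correct and follows essentially the same route as the paper, which derives precisely the a priori bounds \eqref{216}--\eqref{219} and \eqref{213}--\eqref{214} and then invokes the standard Faedo--Galerkin approximation that you have spelled out (Galerkin ODEs, uniform energy and time-derivative estimates, Banach--Alaoglu, Aubin--Lions, limit passage using linearity, uniqueness by the same energy estimate). One minor imprecision: multiplication by $\B'(\widehat{\u})$ with $\widehat{\u}+\mathbf{w}^0\in\mathrm{L}^4(0,T;\mathbb{L}^4(\Omega))$ maps $\mathrm{L}^2(0,T;\mathbb{H}_0^1(\Omega))$ only into $\mathrm{L}^{4/3}(0,T;\mathbb{H}^{-1}(\Omega))$, not $\mathrm{L}^2(0,T;\mathbb{H}^{-1}(\Omega))$, but this is harmless since weak convergence in that weaker space (or the uniform $\mathrm{L}^4(0,T;\mathbb{L}^4(\Omega))$ bound on $\mathfrak{w}^n$ obtained by interpolating the $\mathrm{L}^{\infty}(0,T;\mathbb{L}^2)$ and $\mathrm{L}^{2}(0,T;\mathbb{H}_0^1)$ estimates via Ladyzhenskaya's inequality, as the paper implicitly does in \eqref{213}) already suffices to identify the limit.
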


\section{Optimal Control Problem}\label{sec3}\setcounter{equation}{0}  In this section, we formulate a distributed optimal control problem  as the minimization of a suitable cost functional subject to the controlled tidal dynamics system \eqref{1}. The main objective  is to prove the existence of an optimal control that minimizes the cost functional given below (see \eqref{cost}), subject to the constraint  
\begin{equation}\label{a1}
\left\{
\begin{aligned}
\frac{\partial \mathbf{u}(t)}{\partial t}+\A\mathbf{u}(t)+\B(\mathbf{u}(t))+\nabla \xi (t)&=\mathbf{f}(t)+\U(t),\ \text{ in }\ \H^{-1}(\Omega),\\
\frac{\partial \xi(t)}{\partial t}+\mathrm{div}(h \mathbf{u}(t))&=0, \ \text{ in }\ \mathrm{L}^2(\Omega),\\
\mathbf{u}(0)&=\mathbf{u}_0,\ \xi(0)=\xi_0,
\end{aligned}
\right.
\end{equation}
for a.e. $t\in[0,T]$  and establish the first order necessary conditions of optimality.  The cost functional under our consideration is given  by
\begin{eqnarray}\label{cost}
\begin{aligned}
&\mathscr{J}(\u,\xi,\U) \\&:= \frac{1}{2} \int_0^T \|\u(t)-\u_d(t)\|^2_{\mathbb{L}^2} \d t+\frac{1}{2}\int_0^T\|\nabla\u(t)\|_{\L^2}^2\d t+ \frac{1}{2} \int_0^T \|\xi(t) -\xi_d(t)\|^2_{\mathrm{L}^2} \d t\\&\quad+ \frac{1}{2} \int_0^T\|\U(t)\|^2_{\mathbb{H}^{-1}}\d t+\frac{1}{2}\|\u(T)-\u^f\|^2_{\L^2}+\frac{1}{2}\|\xi(T)-\xi^f\|^2_{\mathrm{L}^2},
\end{aligned}
\end{eqnarray}
where $\u_d(\cdot)\in\mathrm{L}^2(0,T;\mathbb{L}^2(\Omega))$ and $\varphi_d(\cdot)\in\mathrm{L}^2(0,T;\mathrm{L}^2(\Omega))$ are the desired velocity field (or reference velocity) and desired elevation (or tidal flow), respectively. Furthermore, $\u^f\in\L^2(\Omega)$ represents the desired velocity at time $T$ and $\xi^f\in\mathrm{L}^2(\Omega)$ denotes the desired elevation at time $T$. Note that the cost functional is the sum of the total energy,  total dissipation of energy of the flow and total effort by controls. Physically, one can think it as an optimal estimation problem, where we are trying to find an unknown external force based on measurements and the cost functional is then the difference between measurement and the tidal dynamics.  In this work, we take the set of all admissible control class $\mathscr{U}_{\mathrm{ad}}$ as $\mathrm{L}^{2}(0,T;\mathbb{H}^{-1}(\Omega))$. One can consider $\mathrm{L}^{2}(0,T;\mathbb{L}^{2}(\Omega))$ also as a admissible control class (in that case, we have to replace $ \frac{1}{2} \int_0^T\|\U(t)\|^2_{\mathbb{H}^{-1}}\d t$ with $ \frac{1}{2} \int_0^T\|\U(t)\|^2_{\mathbb{L}^{2}}\d t$ in the cost functional \eqref{cost}), but the control class $\mathscr{U}_{\mathrm{ad}}$, under our consideration is much larger than $\mathrm{L}^{2}(0,T;\mathbb{L}^{2}(\Omega))$. As the existence and uniqueness of weak solution is known for the   basic state $(\u,\xi),$ for any control $\U\in\mathrm{L}^{2}(0,T;\mathbb{H}^{-1}(\Omega))$, and we are working only with  the weak solution regularity class,  we are able to take such an admissible class.  Next, we give the definition of admissible class of solutions.

\begin{definition}[Admissible class]\label{definition 1}
	The \emph{admissible class} $\mathscr{A}_{\mathrm{ad}}$ of triples $$(\u,\xi,\U)\in(\mathrm{C}([0,T];\mathbb{L}^2(\Omega))\cap\mathrm{L}^2(0,T;\mathbb{H}_0^1(\Omega)))\times\mathrm{C}([0,T];\mathrm{L}^2(\Omega))\times\mathscr{U}_{\mathrm{ad}} $$ is defined as the set of states $(\u,\xi)$ solving the system \eqref{a1} with the control $\U \in \mathscr{U}_{ad}$. That is,
	\begin{align*}
	\mathscr{A}_{\mathrm{ad}}&:=\Big\{(\u,\xi,\U) :(\u,\xi)\in(\mathrm{C}([0,T];\mathbb{L}^2(\Omega))\cap\mathrm{L}^2(0,T;\mathbb{H}_0^1(\Omega)))\times\mathrm{C}([0,T];\mathrm{L}^2(\Omega))\nonumber\\&\qquad \text{ is \text{the unique weak solution} of }\eqref{a1}  \text{ with the control }\U\in\mathscr{U}_{\mathrm{ad}}\Big\}.
	\end{align*}
\end{definition}
Clearly $\mathscr{A}_{\mathrm{ad}}$ is a nonempty set as for any $\U \in \mathscr{U}_{\mathrm{ad}}$, there exists \emph{a unique weak solution} of the system \eqref{a1}. In view of the above definition, the optimal control problem we are considering can be  formulated as:
\begin{align}\label{control problem}
\min_{ (\u,\xi,\U) \in \mathscr{A}_{\mathrm{ad}}}  \mathscr{J}(\u,\xi,\U).
\end{align}
A solution to the problem \eqref{control problem} is called an \emph{optimal solution}. The optimal triplet is denoted by $(\u^* ,\xi^*, \U^*)$ and the control $\U^*$ is called an \emph{optimal control}.

\subsection{Existence of an optimal control}
Our first aim is to show that an optimal triplet $(\u^*,\xi^*,\U^*)$ exists for the problem \eqref{control problem}.
\begin{theorem}[Existence of an optimal triplet]\label{optimal}
	Let  $(\mathbf{u}_0,\xi_0)\in\mathbb{L}^2(\Omega)\times\mathrm{L}^2(\Omega)$ and  $\f\in\mathrm{L}^2(0,T;\mathbb{H}^{-1}(\Omega))$ be given.  Then there exists at least one triplet  $(\u^*,\xi^*,\U^*)\in\mathscr{A}_{\mathrm{ad}}$  such that the functional $ \mathscr{J}(\u,\xi,\U)$ attains its minimum at $(\u^*,\xi^*,\U^*)$, where $(\u^*,\xi^*)$ is the unique weak solution of the system  \eqref{a1}  with the control $\U^*$.
\end{theorem}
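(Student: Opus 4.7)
The plan is to apply the direct method of the calculus of variations. Since $\mathscr{J}(\u,\xi,\U) \geq 0$ on $\mathscr{A}_{\text{ad}}$, the infimum $J^* := \inf_{(\u,\xi,\U) \in \mathscr{A}_{\text{ad}}} \mathscr{J}(\u,\xi,\U)$ is a well-defined nonnegative real number, so I would select a minimizing sequence $\{(\u^n,\xi^n,\U^n)\}_{n \geq 1} \subset \mathscr{A}_{\text{ad}}$ with $\mathscr{J}(\u^n,\xi^n,\U^n) \downarrow J^*$. The structure of the cost immediately gives the uniform bound $\|\U^n\|_{\mathrm{L}^2(0,T;\H^{-1})} \leq C$; then reading the a-priori estimate \eqref{energy} with $\f+\U^n$ in place of $\f$ (the control enters the state equation exactly as the forcing does) produces
\begin{align*}
\|\u^n\|_{\mathrm{L}^\infty(0,T;\L^2)} + \|\u^n\|_{\mathrm{L}^2(0,T;\H_0^1)} + \|\xi^n\|_{\mathrm{L}^\infty(0,T;\mathrm{L}^2)} \leq C,
\end{align*}
uniformly in $n$. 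Using the state equation \eqref{1} together with Lemma \ref{lem2.3}(i) and the bounds just obtained, calculations of the same type as \eqref{213}--\eqref{214} show that $\partial_t \u^n$ is bounded in $\mathrm{L}^2(0,T;\H^{-1})$ and $\partial_t \xi^n$ is bounded in $\mathrm{L}^2(0,T;\mathrm{L}^2)$.

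Standard weak/weak-$*$ compactness now yields, along a (nonrelabelled) subsequence, limits $\U^* \in \mathrm{L}^2(0,T;\H^{-1})$, $\u^* \in \mathrm{L}^\infty(0,T;\L^2)\cap\mathrm{L}^2(0,T;\H_0^1)$ and $\xi^* \in \mathrm{L}^\infty(0,T;\mathrm{L}^2)$, with weak convergence of the time derivatives in the corresponding spaces. The Aubin--Lions compactness lemma applied to the triple $\H_0^1(\Omega) \hookrightarrow\hookrightarrow \L^2(\Omega) \hookrightarrow \H^{-1}(\Omega)$ upgrades this to $\u^n \to \u^*$ strongly in $\mathrm{L}^2(0,T;\L^2(\Omega))$; the Ladyzhenskaya interpolation $\|\v\|_{\L^4}^2 \leq C\|\v\|_{\L^2}\|\v\|_{\H_0^1}$ combined with the uniform $\mathrm{L}^2(0,T;\H_0^1)$ bound then refines this to strong convergence in $\mathrm{L}^2(0,T;\L^4(\Omega))$.

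The main obstacle is passing to the limit in the nonlinear operator $\B(\u^n)=\gamma(x)|\u^n+\w^0|(\u^n+\w^0)$. For this I would invoke Lemma \ref{lem2.3}(v) and apply Cauchy--Schwarz in time to obtain
\begin{align*}
\|\B(\u^n)-\B(\u^*)\|_{\mathrm{L}^1(0,T;\L^2)} \leq \frac{r}{2\lambda}\,\bigl\|\|\u^n\|_{\L^4}+\|\u^*\|_{\L^4}+\|\w^0\|_{\L^4}\bigr\|_{\mathrm{L}^2(0,T)}\,\|\u^n-\u^*\|_{\mathrm{L}^2(0,T;\L^4)},
\end{align*}
where the first factor on the right is uniformly bounded (Ladyzhenskaya gives $\u^n$ bounded in $\mathrm{L}^4(0,T;\L^4)$ uniformly and $\w^0$ is smooth) and the second factor tends to zero by the previous step. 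The linear terms $\A\u^n$, $\nabla\xi^n$, $\mathrm{div}(h\u^n)$ and the control $\U^n$ all pass to the limit by weak convergence, while the initial data are preserved because $\u^n \in \mathrm{C}([0,T];\L^2(\Omega))$ (from the Aubin--Lions framework) converges weakly to $\u^*$ pointwise at $t=0$, and similarly for $\xi^n$. Hence $(\u^*,\xi^*)$ is the unique weak solution of \eqref{1} with control $\U^*$, so that $(\u^*,\xi^*,\U^*)\in\mathscr{A}_{\text{ad}}$. Finally, the weak and weak-$*$ lower semicontinuity of the norms appearing in the cost \eqref{cost} yields
\begin{align*}
\mathscr{J}(\u^*,\xi^*,\U^*) \leq \liminf_{n\to\infty}\mathscr{J}(\u^n,\xi^n,\U^n) = J^*,
\end{align*}
so $(\u^*,\xi^*,\U^*)$ is an optimal triplet.
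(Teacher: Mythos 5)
Your proposal is correct and follows essentially the same route as the paper: direct method with a minimizing sequence, uniform control bound from the cost, the energy estimate \eqref{energy} with $\f+\U^n$ as forcing, Banach--Alaoglu plus Aubin--Lions compactness, and weak lower semicontinuity of the cost. The only substantive difference is that you make explicit the passage to the limit in the nonlinear term, via the strong $\mathrm{L}^2(0,T;\mathbb{L}^4(\Omega))$ convergence obtained from Ladyzhenskaya interpolation together with the Lipschitz-type estimate of Lemma \ref{lem2.3}(v), a step the paper leaves implicit after establishing a.e.\ convergence; this is a welcome addition rather than a different method.
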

\begin{proof}
	Let us first define $$\mathscr{J} := \inf \limits _{\U \in \mathscr{U}_{\mathrm{ad}}}\mathscr{J}(\u,\xi,\U).$$
	Since, $0\leq \mathscr{J} < +\infty$, there exists a minimizing sequence $\{\U_n\} \in \mathscr{U}_{\mathrm{ad}}$ such that $$\lim_{n\to\infty}\mathscr{J}(\u_n,\xi_n,\U_n) = \mathscr{J},$$ where $(\u_n,\xi_n)$ is the unique weak solution of the system:
	\begin{equation}\label{313}
	\left\{
	\begin{aligned}
	\frac{\partial \mathbf{u}_n(t)}{\partial t}+\A\mathbf{u}_n(t)+\B(\mathbf{u}_n(t))+\nabla \xi_n (t)&=\mathbf{f}(t)+\U_n(t),\ \text{ in }\ \H^{-1}(\Omega),\\
	\frac{\partial \xi_n(t)}{\partial t}+\mathrm{div}(h \mathbf{u}_n(t))&=0, \ \text{ in }\ \mathrm{L}^2(\Omega),\\
	(	\mathbf{u}_n(0),\xi_n(0))&=(\mathbf{u}_0,\xi_0) \ \text{ in }\ \L^2(\Omega)\times\mathrm{L}^2(\Omega). 
	\end{aligned}
	\right.
	\end{equation}
	Since  $\mathbf{0}\in\mathscr{U}_{\mathrm{ad}}$, without loss of generality, we may assume that $\mathscr{J}(\u_n,\xi_n,\U_n) \leq \mathscr{J}(\u,\xi,\mathbf{0})$, where $(\u,\xi,\mathbf{0})\in\mathscr{A}_{\mathrm{ad}}$. Using the definition of $\mathscr{J}(\cdot,\cdot,\cdot)$, this immediately gives
	\begin{align}\label{bound}
	&\frac{1}{2} \int_0^T \|\u_n(t)-\u_d(t)\|^2_{\mathbb{L}^2} \d t+\frac{1}{2}\int_0^T\|\nabla\u_n(t)\|_{\L^2}^2\d t+ \frac{1}{2} \int_0^T \|\xi_n(t) -\xi_d(t)\|^2_{\mathrm{L}^2} \d t\nonumber\\&\quad+ \frac{1}{2} \int_0^T\|\U_n(t)\|^2_{\mathbb{H}^{-1}}\d t+\frac{1}{2}\|\u_n(T)-\u^f\|^2_{\L^2}+\frac{1}{2}\|\xi_n(T)-\xi^f\|^2_{\mathrm{L}^2},\nonumber\\& \leq \frac{1}{2} \int_0^T \|\u(t)-\u_d(t)\|^2_{\mathbb{L}^2} \d t+\frac{1}{2}\int_0^T\|\nabla\u(t)\|_{\L^2}^2\d t+ \frac{1}{2} \int_0^T \|\xi(t) -\xi_d(t)\|^2_{\mathrm{L}^2} \d t\nonumber\\&\quad+\frac{1}{2}\|\u(T)-\u^f\|^2_{\L^2}+\frac{1}{2}\|\xi(T)-\xi^f\|^2_{\mathrm{L}^2}\nonumber\\&\leq \left(\|\u_0\|_{\mathbb{L}^2}^2+\|\xi_0\|_{\mathrm{L}^2}^2+\frac{r}{\lambda}\int_0^T\|\w^0(t)\|_{\mathbb{L}^4}^4\d t+\int_0^T\|\f(t)\|_{\H^{-1}}^2\d t\right)e^{KT}\nonumber\\&\quad+\int_0^T\|\u_d(t)\|_{\L^2}^2\d t+\int_0^T\|\xi_d(t)\|_{\mathrm{L}^2}^2\d t+\|\u^f\|_{\L^2}^2+\|\xi^f\|_{\mathrm{L}^2}^2, \end{align}
	where we used \eqref{energy}. 	Since $\u_d \in \mathrm{L}^{2}(0,T;\L^2(\Omega))$, $\xi_d \in \mathrm{L}^{2}(0,T;\mathrm{L}^2(\Omega))$, $\u^f\in\L^2(\Omega)$ and $\xi^f\in\mathrm{L}^2(\Omega)$, from the above relation, it is clear that, there exists an $R>0$, large enough such that
	$$0 \leq \mathscr{J}(\u_n,\xi_n,\U_n) \leq R < +\infty.$$
	In particular, there exists a large $\widetilde{C}>0,$ such that
	$$ \int_0^T \|\U_n(t)\|^2_{\H^{-1}} \d t \leq  \widetilde{C} < +\infty .$$
	That is, the sequence $\{\U_n\}$ is uniformly bounded in the space $\mathrm{L}^2(0,T;\H^{-1}(\Omega))$. Since $(\u_n,\xi_n)$ is the unique weak solution of the system \eqref{a1} with the control $\U_n$, from the energy estimate \eqref{energy}, we have 
	\begin{align*}
	&\|\u_n(t)\|_{\mathbb{L}^2}^2+\|\xi_n(t)\|_{\mathrm{L}^2}^2+\alpha\int_0^t\|\nabla\u_n(s)\|_{\mathbb{L}^2}^2\d s\nonumber\\&\leq  \left(\|\u_0\|_{\mathbb{L}^2}^2+\|\xi_0\|_{\mathrm{L}^2}^2+\frac{r}{\lambda}\int_0^T\|\w^0(t)\|_{\mathbb{L}^4}^4\d s+2\int_0^T\|\f(t)\|_{\H^{-1}}^2\d s\right.\nonumber\\&\qquad\left.+2\int_0^T\|\U_n(t)\|_{\H^{-1}}^2\d t\right)e^{KT}\nonumber\\&\leq C \left(\|\u_0\|_{\mathbb{L}^2}^2+\|\xi_0\|_{\mathrm{L}^2}^2+\frac{r}{\lambda}\int_0^T\|\w^0(t)\|_{\mathbb{L}^4}^4\d t+2\int_0^T\|\f(t)\|_{\H^{-1}}^2\d t+2\widetilde{C}\right)e^{KT},
	\end{align*}
	for all $t\in[0,T]$ (one can use \eqref{bound} also for uniform bound).	It can be easily seen  that the sequence $\{\u_n\} $ is uniformly bounded in $\mathrm{L}^{\infty}(0,T;\L^2(\Omega))\cap \mathrm{L}^2(0,T;\H^1_0(\Omega))$ and $\{\xi_n\} $ is uniformly bounded in $\mathrm{L}^{\infty}(0,T;\mathrm{L}^2(\Omega))$. 
	Hence, by using the Banach-Alaglou theorem, we can extract a subsequence $\{(\u_{n_k},\xi_{n_k}, \U_{n_k})\}$ such that 
	\begin{equation}\label{conv}
	\left\{
	\begin{aligned} 
	\u_{n_k}&\xrightharpoonup{w^*}\u^*\text{ in } \mathrm{L}^{\infty}(0,T;\L^2(\Omega)), \\ \u_{n_k} &\xrightharpoonup{w} \u^*\text{ in  }\textrm{L}^2(0,T;\H^1_0(\Omega)),\\ \xi_{n_k} &\xrightharpoonup{w^*} \xi^*\text{ in }\mathrm{ L}^{\infty}(0,T;\mathrm{L}^2(\Omega)),\\ 
	\U_{n_k} &\xrightharpoonup{w}  \U^*\text{ in  }\mathrm{L}^2(0,T;\H^{-1}(\Omega)),
	\end{aligned}
	\right.
	\end{equation}
	as $k\to\infty$. For any $\v\in\mathrm{L}^2(0,T;\H_0^1(\Omega))$, we have 
	\begin{align*}
	&\left|\int_0^T\langle\partial_t\u_n(t),\v(t)\rangle\d t\right|\nonumber\\&\leq\int_0^T|\langle\A\u_n(t),\v(t)\rangle|\d t+\int_0^T|\langle\B(\u_n(t)),\v(t)\rangle|\d t+\int_0^T|\langle\nabla\xi_n(t),\v(t)\rangle|\d t\nonumber\\&\quad+\int_0^T|\langle\f(t),\v(t)\rangle|\d t+\int_0^T|\langle\U(t),\v(t)\rangle|\d t\nonumber\\&\leq\bigg\{\alpha\left(\int_0^T\|\nabla\u_n(t)\|_{\L^2}^2\d t\right)^{1/2}+\beta\left(\int_0^T\|\u_n(t)\|_{\L^2}^2\d t\right)^{1/2}+\left(\int_0^T\|\xi_n(t)\|_{\mathrm{L}^2}^2\d t\right)^{1/2}\nonumber\\&\quad+\frac{2\sqrt{2}C_{\Omega}r}{\lambda}\left(\int_0^T\|\u_n(t)\|_{\L^4}^4\d t+\int_0^T\|\w^0(t)\|_{\L^4}^4\d t\right)^{1/2}+\left(\int_0^T\|\f(t)\|_{\H^{-1}}^2\d t\right)^{1/2}\nonumber\\&\quad+\left(\int_0^T\|\U(t)\|_{\H^{-1}}^2\d t\right)^{1/2}\bigg\}\left(\int_0^T\|\nabla\v(t)\|_{\L^2}^2\d t\right)^{1/2},
	\end{align*}
	so that $\|\partial_t\u_n\|_{\mathrm{L}^2(0,T;\H^{-1}(\Omega))}$ is uniformly bounded. Similar to the above calculation, for any  $\upsilon\in\mathrm{L}^2(0,T;\mathrm{L}^2(\Omega))$, we obtain 
	\begin{align}
	&	\left|\int_0^T(\partial_t\xi_n(t),\upsilon(t))_{\mathrm{L}^2}\d t\right| \nonumber\\&\leq\int_0^T|(\mathrm{div}(h\u_n(t)),\upsilon(t))_{\mathrm{L}^2}|\d t\nonumber\\&\leq \Bigg[M\left(\int_0^T\|\u_n(t)\|_{\L^2}^2\d t\right)^{1/2}+\sqrt{2}\mu\left(\int_0^T\|\nabla\u_n(t)\|_{\L^2}^2\d t\right)^{1/2}\Bigg]\nonumber\\&\quad\times\left(\int_0^T\|\upsilon(t)\|_{\mathrm{L}^2}^2\d t\right)^{1/2}, 
	\end{align} 
	so that $\|\partial_t\xi_n\|_{\mathrm{L}^2(0,T;\mathrm{L}^2(\Omega))}$ is uniformly bounded. Thus, along a subsequence, we get the following convergence also:
	\begin{equation}\label{dev}
	\left\{
	\begin{aligned}
	\partial_t	\u_{n_k}&\xrightharpoonup{w}\partial_t\u^*\text{ in } \mathrm{L}^{2}(0,T;\H^{-1}(\Omega)),\\
	\partial_t \xi_{n_k} &\xrightharpoonup{w}\partial_t \xi^*\text{ in }\mathrm{ L}^{2}(0,T;\mathrm{L}^2(\Omega)),
	\end{aligned}
	\right. 	
	\end{equation}
	as $k\to\infty$.	Using Aubin-Lion's compactness theorem (see Theorem 1, \cite{SJ})  and the convergence given in \eqref{conv}, we infer that 
	\begin{equation}\label{strong}
	\begin{aligned}
	\u_{n_k}&\to \u^*\ \text{ in }\ \mathrm{L}^{2}(0,T;\L^2(\Omega)), 
	\end{aligned}
	\end{equation}
	as $k\to\infty$. Using H\"older's and Ladyzhenskaya's  inequalities, we find 
	\begin{align}\label{L4}
	&\int_0^T\|\u_n(t)-\u(t)\|_{\L^4}^2\d t\nonumber\\&\leq C_{\Omega}\sup_{t\in[0,T]}\|\u_n(t)-\u(t)\|_{\L^2}\int_0^T\|\nabla(\u_n(t)-\u(t))\|_{\L^2}\d t\nonumber\\&\leq \sqrt{2}C_{\Omega}T^{1/2}\sup_{t\in[0,T]}\|\u_n(t)-\u(t)\|_{\L^2}\nonumber\\&\quad\times\left[\left(\int_0^T\|\nabla\u_n(t)\|_{\L^2}^2\d t\right)^{1/2}+\left(\int_0^T\|\nabla\u(t)\|_{\L^2}^2\d t\right)^{1/2}\right]\nonumber\\&\to 0\ \text{ as } \ n\to\infty, 
	\end{align}
	using \eqref{conv} and \eqref{strong}. 
	Most of the terms appearing in the first and second equations in \eqref{313} are linear, we can use the weak convergences in \eqref{conv} and \eqref{dev} to pass the limit in the weak form (see \eqref{3.13} for weak formulation of the uncontrolled system). Thus, we need to check  the convergence of the nonlinear term only. For any $\v\in\mathrm{C}^1(\overline{\mathrm{Q}}_T)$, we have 
	\begin{align*}
	&\left|\int_0^T(\B(\u_n(t))-\B(\u(t)),\v(t))_{\mathbb{L}^2}\d t\right|\nonumber\\&\leq\int_0^T\|\B(\u_n(t))-\B(\u(t))\|_{\L^2}\|\v(t)\|_{\L^2}\d t\nonumber\\&\leq\frac{r}{2\lambda}\sup_{t\in[0,T]}\|\v(t)\|_{\L^2}\left(\int_0^T\left[\|\u_n(t)\|_{\L^4}+\|\u(t)\|_{\L^4}+\|\w^0(t)\|_{\L^4}\right]^2\d t\right)^{1/2}\nonumber\\&\quad\times\left(\int_0^T\|\u_n(t)-\u(t)\|_{\L^4}^2\d t\right)^{1/2}\nonumber\\&\to 0\ \text{ as } \ n\to\infty, 
	\end{align*}
	where we used  the Lemma \ref{lem2.3} (v) and \eqref{L4}. Using a density argument, we obtain 
	\begin{align}
	\B(\u_n)\xrightarrow{w}\B(\u)\ \text{ in }\ \mathrm{L}^2(0,T;\L^2(\Omega)), \ \text{ as } \ n\to\infty. 
	\end{align}
	From the above convergences and discussions, one can pass limit in the equation corresponding to $(\u_{n_k},\xi_{n_k},\U_{n_k})\in \mathscr{A}_{\mathrm{ad}}$ (replace $n$ with $n_k$ in \eqref{313}) and conclude that  $(\u^*,\xi^*)$ is the unique weak solution of the system \eqref{a1} with the control $\U^*\in\mathrm{L}^2(0,T;\H^{-1}(\Omega))$.   It is a consequence of Theorems 2 and 3, section 5.9.2, \cite{evans} that $(\u^*,\xi^*) \in \mathrm{C}([0,T];\L^2(\Omega)) \times \mathrm{C}([0,T];\mathrm{L}^2(\Omega))$. Note that the initial condition given  in (\ref{313}) and the right continuity in time at $0$ gives 
	\begin{align}\label{initial2}
	\u^*(0)=	\u_0\in\mathbb{L}^2(\Omega)\ \text{ and }\ \xi^*(0)=\xi_0\in \mathrm{L}^2(\Omega).
	\end{align}
	Since, $(\u^*,\xi^*)$ is the unique weak solution of the system \eqref{a1} with the control $\U^*\in\mathrm{L}^2(0,T;\H^{-1}(\Omega))$, the whole sequence $(\u_n,\xi_n)$ converges to $(\u^*,\xi^*)$. This easily gives  $(\u^*,\xi^*,\U^*)\in \mathscr{A}_{\mathrm{ad}}$. 
	
	Now we show that  $(\u^*,\xi^*,\U^*)$ is a minimizer, that is, \emph{$\mathscr{J}=\mathscr{J}(\u^*,\xi^*,\U^*)$}.  Since the cost functional $\mathscr{J}(\cdot,\cdot,\cdot)$ is continuous and convex (see Proposition III.1.6 and III.1.10,  \cite{EI})  on $\mathrm{L}^2(0,T;\L^2(\Omega)) \times \mathrm{L}^2(0,T;\mathrm{L}^2(\Omega)) \times \mathscr{U}_{\mathrm{ad}}$, it follows that $\mathscr{J}(\cdot,\cdot,\cdot)$ is weakly lower semi-continuous (Proposition II.4.5, \cite{EI}). That is, for a sequence 
	$$(\u_n,\xi_n,\U_n)\xrightharpoonup{w}(\u^*,\xi^*,\U^*)\ \text{ in }\ \mathrm{L}^2(0,T;\L^2(\Omega)) \times \mathrm{L}^2(0,T;\mathrm{L}^2(\Omega)) \times \mathrm{L}^2(0,T;\H^{-1}(\Omega)),$$
	we have 
	\begin{align*}
	\mathscr{J}(\u^*,\xi^*,\U^*) \leq  \liminf \limits _{n\rightarrow \infty} \mathscr{J}(\u_n,\xi_n,\U_n).
	\end{align*}
	Therefore, we get
	\begin{align*}\mathscr{J} \leq \mathscr{J}(\u^*,\xi^*,\U^*) \leq  \liminf \limits _{n\rightarrow \infty} \mathscr{J}(\u_n,\xi_n,\U_n)=  \lim \limits _{n\rightarrow \infty} \mathscr{J}(\u_n,\xi_n,\U_n) = \mathscr{J},\end{align*}
	and hence $(\u^*,\xi^*,\U^*)$ is an optimizer of the problem \eqref{control problem}.
\end{proof}

\subsection{The adjoint system} In order to establish the first order necessary conditions of optimality, we need to find the adjoint system corresponding to \eqref{a1}. Remember that the optimal control obtained in the Theorem \ref{optimal} can be characterized via adjoint variable.  In this subsection, we formally derive the adjoint system corresponding to the problem \eqref{a1}.  Let us first define
\begin{equation}\label{n1n2}
\left\{
\begin{aligned} 
\mathcal{N}_1(\u,\xi, \U) &:= -\A\mathbf{u}-\B(\mathbf{u})-\nabla \xi +\U,  \\
\mathcal{N}_2(\u,\xi) &:= -\mathrm{div}(h\u).
\end{aligned}
\right.
\end{equation}
Then the tidal dynamics system \eqref{a1} can be rewritten as
$$\{\partial_t \u,\partial_t \xi\} = \{\mathcal{N}_1(\u,\xi, \U), \mathcal{N}_2(\u,\xi)\}.$$
We define the \emph{augmented cost functional} $\widetilde{\mathscr{J}}$ by
\begin{align*}&\widetilde{\mathscr{J}}(\u,\xi,\U,\p,\varphi)\nonumber\\& :=  \int_0^T \langle \p,\partial_t \u - \mathcal{N}_1(\u,\xi, \U) \rangle \d t+ \int_0^T \langle \varphi, \partial_t \xi - \mathcal{N}_2(\u,\xi) \rangle \d t - \mathscr{J}(\u,\xi,\U) , \end{align*}
where $\p$ and $\varphi$ denote the adjoint variables corresponding to $\u$ and $\xi$ respectively. 
Next, we derive the adjoint equations formally by differentiating the augmented cost functional $\widetilde{\mathscr{J}}$ in the Gateaux  sense with respect to each of its variables. The adjoint variables $ \p, \xi $ and $\U$ satisfy the following system:
\begin{equation}\label{e2}
\left\{
\begin{aligned}
-	\frac{\partial\p}{\partial t}  - [\partial_\u \mathcal{N}_1]^*\p - [\partial_\u \mathcal{N}_2]^* \varphi  &= \mathscr{J}_\u,  \\ 
-	\frac{\partial \varphi}{\partial t} - [\partial_{\xi} \mathcal{N}_1]^*\p - [\partial_{\xi} \mathcal{N}_2]^* \varphi &= \mathscr{J}_{\xi},  \\
- [\partial_\U \mathcal{N}_1]^*\p - [\partial_\U  \mathcal{N}_2]^* \varphi &= \mathscr{J}_\U, \\ \p\big|_{\partial\Omega}&=\mathbf{0},\\ \p(T,\cdot)=\mathbf{0}, \ \varphi(T,\cdot)&=0.
\end{aligned}
\right.
\end{equation}
Note that differentiating $\widetilde{\mathscr{J}}$ with respect to the adjoint variables recovers the original nonlinear system.
Further, we compute $[\partial_\u \mathcal{N}_1]^*\p$, $[\partial_\u \mathcal{N}_2]^* \varphi$, $[\partial_{\xi} \mathcal{N}_1]^*\p$, $[\partial_{\xi}  \mathcal{N}_2]^* \varphi$ as
\begin{equation}
\left\{
\begin{aligned}
{[\partial_\u \mathcal{N}_1]}^*\p &= -\widetilde{\A}\p-\B'(\u)\p,\ \quad  [\partial_\u \mathcal{N}_2]^* \varphi = h\nabla \varphi,\\
[\partial_\xi \mathcal{N}_1]^* \p &=  \mathrm{div\ }\p ,\qquad\quad\quad\quad \ \ [\partial_\xi \mathcal{N}_2]^* \varphi = 0,
\end{aligned}
\right.
\end{equation}
where $\widetilde{\A}\mathbf{p}=-\alpha \Delta \mathbf{p}-\beta \mathbf{k}\times \mathbf{p}$.  Since $\A$ is non-symmetric, we have $\A\v\neq \widetilde{\A}\v$. But one can easily see that $ \langle\A\v,\v\rangle=\alpha\|\nabla\v\|_{\L^2}^2=\langle \widetilde{\A}\mathbf{v},\v\rangle$, for all $\v\in\H_0^1(\Omega)$. Since $\mathbb{H}^{-1}(\Omega)$ is a separable, reflexive Banach space, it should be noted that $f(\cdot)=\frac{1}{2}\|\cdot\|^2_{\H^{-1}}$ is Gateaux differentiable. 
\iffalse 
and we calculate the Gateaux derivative as
\begin{align*}
\langle f'(\u),\v\rangle_{\H_0^1\times\H^{-1}} &=\frac{\d}{\d\tau}f(\u+\tau \v)\bigg|_{\tau=0}= \frac{1}{2}\frac{\d}{\d\tau}\|\u+\tau \v\|^2_{\H^{-1}}\bigg|_{\tau=0}\nonumber\\&=\frac{1}{2}\frac{\d}{\d\tau}\|(-\Delta)^{-1/2}(\u+\tau \v)\|^2_{\L^2}\bigg|_{\tau=0}=((-\Delta)^{-1/2}\u,(-\Delta)^{-1/2}\v)_{\mathbb{L}^2}\nonumber\\&= \langle(-\Delta)^{-1}\u,\v\rangle_{\H_0^1\times\H^{-1}},
\end{align*}
for all $\v\in\H^{-1}(\Omega)$. Thus, we have $f'(\u)=(-\Delta)^{-1}\u\in\H_0^1(\Omega)$.
\fi 
Note that the third condition in (\ref{e2}) gives $(-\Delta)^{-1}\U=-\p$. Thus from \eqref{e2}, it follows that the adjoint variables $(\p,\varphi)$ satisfy the following adjoint system in the abstract form: 
\begin{equation}\label{adj}
\left\{
\begin{aligned}
-\frac{\partial \p(t)}{\partial t}+\widetilde{\A}\p(t)+\B'(\u(t))\p (t)-h\nabla\varphi(t)&= (\u(t)-\u_d(t))-\Delta\u(t),  \text{ in } \H^{-1}(\Omega),\\
-\frac{\partial \varphi(t) }{\partial t}  -\mathrm{div\ }\p(t)&= (\xi(t) - \xi_d(t)),   \ \text{ in }\ \mathrm{L}^2(\Omega), \\  (\p(T),\varphi(T))=(\u(T)-\u^f,\xi(T)-\xi^f)&\in\L^2(\Omega)\times\mathrm{L}^2(\Omega),
\end{aligned}
\right.
\end{equation}
for a.e. $t\in[0,T]$. Note that $\p(x,t)=\mathbf{0}$, for a.e. $(x,t)\in\partial\Omega\times[0,T]$.  Let us now  obtain an a-priori energy estimate. We take inner product with $\p(\cdot)$ to the first equation in \eqref{adj} to obtain 
\begin{align}\label{37}
-&\frac{1}{2}\frac{\d}{\d t}\|\p(t)\|_{\mathbb{L}^2}^2+\alpha\|\nabla\p(t)\|_{\mathbb{L}^2}^2\nonumber\\&=-(\B'(\u(t))\p (t),\p(t))_{\mathbb{L}^2}-\langle h\nabla\varphi(t),\p(t)\rangle+(\u(t)-\u_d(t),\p(t))_{\L^2}\nonumber\\&\quad-\langle\Delta\u(t),\p(t)\rangle.
\end{align}
Using an  integration by parts, the Cauchy-Schwarz, H\"older and Young inequalities, we estimate the final three terms from the right hand side of the equality \eqref{37} as 
\begin{align}
-&\langle h\nabla\varphi,\p\rangle+(\u-\u_d,\p)_{\L^2}-\langle\Delta\u,\p\rangle\nonumber\\&=(\varphi,\mathrm{div}(h\p))_{\mathrm{L}^2}+(\u-\u_d,\p)_{\L^2}+(\nabla\u,\nabla\p)_{\mathbb{L}^2}\nonumber\\&\leq \|\varphi\|_{\mathrm{L}^2}\|\nabla h\cdot\p+h\mathrm{div\ }\p\|_{\mathrm{L}^2}+\|\u-\u_d\|_{\mathbb{L}^2}\|\p\|_{\mathbb{L}^2}+\|\nabla\u\|_{\L^2}\|\nabla\p\|_{\L^2}\nonumber\\&\leq \left(\|\nabla h\|_{\mathbb{L}^{\infty}}\|\p\|_{\mathbb{L}^2}+\|h\|_{\mathrm{L}^{\infty}}\|\mathrm{div\ }\p\|_{\mathrm{L}^2}\right)\|\varphi\|_{\mathrm{L}^2}+\frac{1}{2}\|\u-\u_d\|_{\mathbb{L}^2}^2+\frac{1}{2}\|\p\|_{\L^2}^2\nonumber\\&\quad+\frac{\alpha}{8}\|\nabla\p\|_{\L^2}^2+\frac{2}{\alpha}\|\nabla\u\|_{\L^2}^2\nonumber\\&\leq \frac{M+1}{2}\|\p\|_{\mathbb{L}^2}^2+\left(\frac{M}{2}+\frac{4\mu^2}{\alpha}\right)\|\varphi\|_{\mathrm{L}^2}^2+\frac{\alpha}{4}\|\nabla\p\|_{\L^2}^2+\frac{1}{2}\|\u-\u_d\|_{\mathbb{L}^2}^2+\frac{2}{\alpha}\|\nabla\u\|_{\L^2}^2.
\end{align}
Next, we take inner product with $\varphi(\cdot)$ to the second equation in \eqref{adj} to get 
\begin{align}\label{38}
-\frac{1}{2}\frac{\d}{\d t}\|\varphi(t)\|_{\mathrm{L}^2}^2&=(\mathrm{div\ }\p(t),\varphi(t))_{\mathrm{L}^2}+((\xi(t)-\xi_d(t)),\varphi(t))_{\mathrm{L}^2}\nonumber\\&\leq \|\mathrm{div\ }\p(t)\|_{\mathrm{L}^2}\|\varphi(t)\|_{\mathrm{L}^2}+\|\xi(t)-\xi_d(t)\|_{\mathrm{L}^2}\|\varphi(t)\|_{\mathrm{L}^2}\nonumber\\&\leq \frac{\alpha}{4}\|\nabla\p(t)\|_{\mathbb{L}^2}+2\left(\frac{1}{\alpha}+1\right)\|\varphi(t)\|_{\mathrm{L}^2}^2+\frac{1}{2}\|\xi(t)-\xi_d(t)\|_{\mathrm{L}^2}^2.
\end{align}
Combining \eqref{37} and \eqref{38}, we find
\begin{align*}
-&\frac{\d}{\d t}\left(\|\p(t)\|_{\mathbb{L}^2}^2+\|\varphi(t)\|_{\mathrm{L}^2}^2\right)+\alpha\|\nabla\p(t)\|_{\mathbb{L}^2}^2\nonumber\\&\leq -2(\B'(\u(t))\p (t),\p(t))_{\mathbb{L}^2}+ (M+1)\|\p(t)\|_{\mathbb{L}^2}^2\nonumber\\&\quad+\left[M+\frac{8\mu^2}{\alpha}+4\left(\frac{1}{\alpha}+1\right)\right]\|\varphi(t)\|_{\mathrm{L}^2}^2\nonumber\\&\quad+ \|\u(t)-\u_d(t)\|_{\mathbb{L}^2}^2+\|\xi(t)-\xi_d(t)\|_{\mathrm{L}^2}^2+\frac{4}{\alpha}\|\nabla\u(t)\|_{\L^2}^2.
\end{align*}
Integrating the above inequality from $t$ to $T$, we obtain 
\begin{align}\label{39}
&\|\p(t)\|_{\mathbb{L}^2}^2+\|\varphi(t)\|_{\mathrm{L}^2}^2+\alpha\int_t^T\|\nabla\p(s)\|_{\mathbb{L}^2}^2\d s+4\gamma\int_t^T\||\u(s)+\w^0(s)|^{\frac{1}{2}}\p(s)\|_{\L^2}^2\d s\nonumber\\&\leq\|\p(T)\|_{\mathbb{L}^2}^2+\|\varphi(T)\|_{\mathrm{L}^2}^2+ \left[M+\frac{8\mu^2}{\alpha}+4\left(\frac{1}{\alpha}+1\right)\right]\int_t^T\left(\|\p(t)\|_{\L^2}^2+\|\varphi(s)\|_{\mathrm{L}^2}^2\right)\d s\nonumber\\&\quad + \int_t^T\|\u(s)-\u_d(s)\|_{\mathbb{L}^2}^2\d s+\int_t^T\|\xi(s)-\xi_d(s)\|_{\mathrm{L}^2}^2\d s+\frac{4}{\alpha}\int_t^T\|\nabla\u(s)\|_{\L^2}^2\d s.
\end{align}
An application of the Gronwall's inequality in \eqref{39} yields 
\begin{align}\label{40}
&\|\p(t)\|_{\mathbb{L}^2}^2+\|\varphi(t)\|_{\mathrm{L}^2}^2\nonumber\\&\leq \bigg( \|\u(T)-\u^f\|_{\mathbb{L}^2}^2+\|\xi(T)-\xi^f\|_{\mathrm{L}^2}^2+\int_0^T\|\u(t)-\u_d(t)\|_{\mathbb{L}^2}^2\d t\nonumber\\&\quad+\int_0^T\|\xi(t)-\xi_d(t)\|_{\mathrm{L}^2}^2\d t+\frac{4}{\alpha}\int_0^T\|\nabla\u(t)\|_{\L^2}^2\d t\bigg)e^{\left[M+\frac{8\mu^2}{\alpha}+4\left(\frac{1}{\alpha}+1\right)\right]T},
\end{align}
for all $t\in[0,T]$. Using \eqref{40} in \eqref{39}, we find 
\begin{align}\label{40b}
&\sup_{t\in[0,T]}\left(\|\p(t)\|_{\mathbb{L}^2}^2+\|\varphi(t)\|_{\mathrm{L}^2}^2\right)+\alpha\int_0^T\|\nabla\p(t)\|_{\mathbb{L}^2}^2\d t\nonumber\\&\leq \bigg( \|\u(T)-\u^f\|_{\mathbb{L}^2}^2+\|\xi(T)-\xi^f\|_{\mathrm{L}^2}^2+\int_0^T\|\u(t)-\u_d(t)\|_{\mathbb{L}^2}^2\d t\nonumber\\&\quad+\int_0^T\|\xi(t)-\xi_d(t)\|_{\mathrm{L}^2}^2\d t+\frac{4}{\alpha}\int_0^T\|\nabla\u(t)\|_{\L^2}^2\d t\bigg)e^{2\left[M+\frac{8\mu^2}{\alpha}+4\left(\frac{1}{\alpha}+1\right)\right]T}.
\end{align}
Since the quantities $\u(T),\u^f\in\L^2(\Omega)$, $\xi(T), \xi^f\in\mathrm{L}^2(\Omega)$, $\u,\u_d\in\mathrm{L}^2(0,T;\mathbb{L}^2(\Omega))$, $\u\in\mathrm{L}^2(0,T;\H_0^1(\Omega))$ and $\varphi,\varphi_d\in\mathrm{L}^2(0,T;\mathrm{L}^2(\Omega))$, the right hand side of the estimate in \eqref{40b} is uniformly bounded. Calculations similar to \eqref{213} and \eqref{214} yield $\|\partial_t\p\|_{\mathrm{L}^2(0,T;\H^{-1}(\Omega))}$ and $\|\partial_t\varphi\|_{\mathrm{L}^2(0,T;\mathrm{L}^2(\Omega))}$ are also uniformly bounded. 

Once again using a Faedo-Galerkin approximation technique and the Banach-Alaoglu theorem, one can obtain the global solvability resuslts of \eqref{adj}. The following theorem gives the global  existence and uniqueness of weak solution to the system (\ref{adj}). As the system \eqref{adj} is linear, the uniqueness of weak solution easily follows from the estimate \eqref{40}. 

\begin{theorem}\label{thm3.4}
	For $\u_d\in\mathrm{L}^2(0,T;\mathbb{L}^2(\Omega))$,  $\xi_d\in\mathrm{L}^2(0,T;\mathrm{L}^2(\Omega))$, $\u^f\in\L^2(\Omega)$ and $\xi^f\in\mathrm{L}^2(\Omega)$, 
	there exists \emph{a unique weak solution} to the system \eqref{adj} satisfying 
	$$(\p,\varphi)\in(\mathrm{C}([0,T];\mathbb{L}^2(\Omega))\cap\mathrm{L}^2(0,T;\mathbb{H}_0^1(\Omega)))\times\mathrm{C}([0,T];\mathrm{L}^2(\Omega)),$$ with $$(\partial_t\p,\partial_t \varphi)\in\mathrm{L}^2(0,T;\mathbb{H}^{-1}(\Omega))\times\mathrm{L}^2(0,T;\mathrm{L}^2(\Omega)).$$
\end{theorem}

\subsection{First order necessary conditions of optimality}
In this subsection, we prove the first order necessary condition or optimality for the optimal control problem \eqref{control problem} and discuss about Pontryagin's maximum principle. We characterize the optimal control obtained in Theorem \ref{optimal} in terms of the adjoint variable. Remember that our optimal control problem is a minimization of the cost functional given in (\ref{cost}) and hence we obtain  a  minimum principle. We mainly follow the  techniques used in the works, \cite{FART,sritharan} (incompressible Navier-Stokes equations), \cite{BDM} (Cahn-Hilliard-Navier-Stokes equations), etc to obtain the first order necessary conditions. The main result of our paper is:
\begin{theorem}\label{main}
	Let $(\u^*,\xi^*,\U^*)\in\mathscr{A}_{\mathrm{ad}}$ be the optimal solution of the problem \eqref{control problem} obtained in Theorem \ref{optimal}. Then, there exists \emph{a unique weak solution} $(\p,\varphi)\in(\mathrm{C}([0,T];\mathbb{L}^2(\Omega))\cap\mathrm{L}^2(0,T;\mathbb{H}_0^1(\Omega)))\times\mathrm{C}([0,T];\mathrm{L}^2(\Omega))$ of the adjoint system \eqref{adj} such that
	\begin{eqnarray}\label{3.41}
	\U^*(t)=\Delta\p(t)\in \H^{-1}(\Omega),  \text{ a.e. }  t \in [0,T].
	\end{eqnarray}
\end{theorem}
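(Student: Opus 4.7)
The plan is a convex perturbation argument; this suffices because $\mathscr{U}_{\text{ad}}=\mathrm{L}^2(0,T;\mathbb{H}^{-1}(\Omega))$ is a linear space, so no needle variation is required. For an arbitrary $\mathrm{W}\in\mathscr{U}_{\text{ad}}$ and $\theta\in(0,1]$, I would set $\U^{\theta}:=\U^{*}+\theta(\mathrm{W}-\U^{*})$ and let $(\u^{\theta},\xi^{\theta})$ be the corresponding weak solution of \eqref{1} given by Theorem \ref{weake}. Optimality of $(\u^{*},\xi^{*},\U^{*})$ forces $\mathscr{J}(\u^{\theta},\xi^{\theta},\U^{\theta})\geq\mathscr{J}(\u^{*},\xi^{*},\U^{*})$; dividing by $\theta$ and letting $\theta\downarrow 0$ should produce a first-order variational inequality.

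The main technical step, and the one I expect to be the principal obstacle, is to establish that $(\u^{\theta}-\u^{*})/\theta\to\mathfrak{w}$ and $(\xi^{\theta}-\xi^{*})/\theta\to\eta$ in suitable topologies, where $(\mathfrak{w},\eta)$ is the unique weak solution (Theorem \ref{linear}) of the linearized system \eqref{4} with $\widehat{\mathbf{g}}=\mathbf{0}$, control $\mathrm{W}-\U^{*}$, and zero initial data. I would subtract the state equations, rescale by $\theta^{-1}$, and test with $(\u^{\theta}-\u^{*}-\theta\mathfrak{w})/\theta$ against the corresponding adjusted equation. Controlling the remainder $\theta^{-1}[\B(\u^{\theta})-\B(\u^{*})]-\B'(\u^{*})\mathfrak{w}$ requires items (iii)--(vi) of Lemma \ref{lem2.3}, the Ladyzhenskaya embedding \eqref{8}, and uniform a priori bounds on $\u^{\theta},\u^{*}$; a Gronwall argument in the spirit of \eqref{219} should then deliver the convergence. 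The delicacy is that the nonlinearity $\gamma|\u+\mathbf{w}^{0}|(\u+\mathbf{w}^{0})$ is only of class $C^{1}$, so the second-order remainder must be handled quadratically via the Lipschitz bound (v) to remain $o(\theta)$.

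The resulting inequality reads
\begin{align*}
\int_{0}^{T}(\u^{*}-\u_{d},\mathfrak{w})_{\mathbb{L}^{2}}\d t+\int_{0}^{T}(\xi^{*}-\xi_{d},\eta)_{\mathrm{L}^{2}}\d t+\int_{0}^{T}\langle(-\Delta)^{-1}\U^{*},\mathrm{W}-\U^{*}\rangle\d t\geq 0,
\end{align*}
the last term being the Gâteaux derivative of $\tfrac{1}{2}\|\cdot\|^{2}_{\mathbb{H}^{-1}}$ computed in the excerpt. To eliminate $\mathfrak{w}$ and $\eta$, I would test the linearized system \eqref{4} against $(\p,\varphi)$ and the adjoint system \eqref{adj} against $(\mathfrak{w},\eta)$ and subtract. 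Integration by parts in time uses $\mathfrak{w}(0)=\mathbf{0}$, $\eta(0)=0$, $\p(T)=\mathbf{0}$, $\varphi(T)=0$; the identity $\langle\A\mathfrak{w},\p\rangle=\langle\widetilde{\A}\p,\mathfrak{w}\rangle$ (from antisymmetry of $\mathbf{k}\times\cdot$) and the symmetry $(\B'(\u^{*})\mathfrak{w},\p)_{\mathbb{L}^{2}}=(\B'(\u^{*})\p,\mathfrak{w})_{\mathbb{L}^{2}}$ (multiplication operator) cancel identically, as do the $\nabla\eta/\mathrm{div}\,\p$ and $h\nabla\varphi/\mathrm{div}(h\mathfrak{w})$ pairings after integration by parts in space. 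All state terms collapse, leaving $\int_{0}^{T}(\u^{*}-\u_{d},\mathfrak{w})\d t+\int_{0}^{T}(\xi^{*}-\xi_{d},\eta)\d t=\int_{0}^{T}\langle\p,\mathrm{W}-\U^{*}\rangle\d t$.

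Substitution gives $\int_{0}^{T}\langle\p(t)+(-\Delta)^{-1}\U^{*}(t),\mathrm{W}(t)-\U^{*}(t)\rangle\d t\geq 0$ for every $\mathrm{W}\in\mathscr{U}_{\text{ad}}$. Localizing $\mathrm{W}-\U^{*}$ on arbitrary measurable subsets of $[0,T]$ via a Lebesgue-point argument yields $\p(t)+(-\Delta)^{-1}\U^{*}(t)=\mathbf{0}$ in $\mathbb{H}_{0}^{1}(\Omega)$ for a.e.\ $t$, i.e., $\U^{*}(t)=\Delta\p(t)$. The claimed pointwise minimum principle \eqref{3.41} follows from the Hilbert-space identity
\begin{align*}
\tfrac{1}{2}\|\mathrm{W}\|^{2}_{\mathbb{H}^{-1}}-\tfrac{1}{2}\|\U^{*}\|^{2}_{\mathbb{H}^{-1}}-\langle(-\Delta)^{-1}\U^{*},\mathrm{W}-\U^{*}\rangle=\tfrac{1}{2}\|\mathrm{W}-\U^{*}\|^{2}_{\mathbb{H}^{-1}}\geq 0,
\end{align*}
applied pointwise in $t$ and combined with $(-\Delta)^{-1}\U^{*}=-\p$. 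Uniqueness of $(\p,\varphi)$ is inherited from Theorem \ref{thm3.4}.
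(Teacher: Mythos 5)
Your proposal is correct and follows essentially the same route as the paper: establish the G\^ateaux differentiability of the control-to-state map (the content of Lemmas \ref{lem3.7} and \ref{lem3.8}), pass to the limit in the difference quotient of the cost, eliminate $(\mathfrak{w},\eta)$ by the duality computation with the adjoint system \eqref{adj}, and exploit the linearity of $\mathscr{U}_{\text{ad}}$ to turn the variational inequality into the identity $\p(t)+(-\Delta)^{-1}\U^*(t)=\mathbf{0}$, from which \eqref{3.41} follows. Your convex-combination parametrization $\U^*+\theta(\mathrm{W}-\U^*)$ versus the paper's two-sided perturbation $\U^*\pm\tau\U$ is only a cosmetic difference.
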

From \eqref{40b}, we know that 
\begin{align}\label{40a}
&\sup_{t\in[0,T]}\left(\|\p(t)\|_{\mathbb{L}^2}^2+\|\varphi(t)\|_{\mathrm{L}^2}^2\right)+\alpha\int_0^T\|\nabla\p(t)\|_{\mathbb{L}^2}^2\d t\nonumber\\&\leq \bigg( \|\u^*(T)-\u^f\|_{\mathbb{L}^2}^2+\|\xi^*(T)-\xi^f\|_{\mathrm{L}^2}^2+\int_0^T\|\u^*(t)-\u_d(t)\|_{\mathbb{L}^2}^2\d t\nonumber\\&\quad+\int_0^T\|\xi^*(t)-\xi_d(t)\|_{\mathrm{L}^2}^2\d t+\frac{4}{\alpha}\int_0^T\|\nabla\u^*(t)\|_{\L^2}^2\d t\bigg)e^{2\left[M+\frac{8\mu^2}{\alpha}+4\left(\frac{1}{\alpha}+1\right)\right]T}=:K_T. 
\end{align}
Before proving Theorem \ref{main}, we provide two important Lemmas, which is used to establish our main Theorem. 
\begin{lemma}\label{lem3.7}
	Let $(\u^*,\xi^*,\U^*)\in\mathscr{A}_{\mathrm{ad}}$ be an optimal triplet for the control problem \eqref{control problem}. Let $(\u_{\U^*+\tau\U},\xi_{\U^*+\tau\U})$ be the unique weak solution of the system \eqref{a1} with the control $\U^*+\tau\U$, for $\tau\in\R$ and $\U\in\H^{-1}(\Omega)$. Then, we have 
	\begin{align}\label{325a}
	&\sup_{t\in[0,T]}\left[\|\u_{\U^*+\tau\U}(t)-\u_{\U^*}(t)\|_{\mathbb{L}^2}^2+\|\xi_{\U^*+\tau\U}(t)-\xi^*(t)\|_{\mathrm{L}^2}^2\right]\nonumber\\&\quad+\alpha\int_0^T\|\nabla(\u_{\U^*+\tau\U}(t)-\u_{\U^*}(t))\|_{\mathbb{L}^2}^2\d t\nonumber\\&\quad\leq \left\{\frac{8\tau^2}{\alpha}\int_0^T\|\U(t)\|_{\H^{-1}}^2\d t\right\}e^{ \left[\frac{4}{\alpha}\left(2+\mu^2\right)+M\right]T}.
	\end{align}
\end{lemma}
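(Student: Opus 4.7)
The plan is to mimic closely the a priori energy estimate already carried out for the linearized system \eqref{4} (inequalities \eqref{216}--\eqref{219}), with the crucial observation that the difference of two nonlinear evaluations can be handled by the monotonicity of $\B$ (Lemma \ref{lem2.3} (iii)), which plays the same role that the positivity of $\B'(\widehat{\u})$ played in the linearized analysis.

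\textbf{Step 1 (Setting up the difference system).} I would write $\y(t):=\u_{\U^*+\tau\U}(t)-\u_{\U^*}(t)$ and $\zeta(t):=\xi_{\U^*+\tau\U}(t)-\xi^*(t)$, noting that both solutions exist uniquely by Theorem \ref{weake} and share the same initial data and the same $\w^0$, $\f$. Subtracting the two copies of \eqref{1} gives
\begin{equation*}
\left\{
\begin{aligned}
\partial_t\y+\A\y+\B(\u_{\U^*+\tau\U})-\B(\u_{\U^*})+\nabla\zeta&=\tau\U,\\
\partial_t\zeta+\text{div}(h\y)&=0,
\end{aligned}
\right.
\end{equation*}
with $\y(0)=\mathbf{0}$, $\zeta(0)=0$, and $\y|_{\partial\Omega}=\mathbf{0}$.

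\textbf{Step 2 (Energy estimate for $\y$).} Pairing the first equation with $\y$ in $\mathbb{L}^2$, using $\langle\A\y,\y\rangle=\alpha\|\nabla\y\|_{\mathbb{L}^2}^2$ from \eqref{7}, the monotonicity $(\B(\u_{\U^*+\tau\U})-\B(\u_{\U^*}),\y)_{\mathbb{L}^2}\geq 0$ from Lemma \ref{lem2.3} (iii), and integrating by parts $-\langle\nabla\zeta,\y\rangle=(\zeta,\text{div}\,\y)_{\mathrm{L}^2}$, the control term is absorbed via $\langle\tau\U,\y\rangle\leq\frac{\alpha}{8}\|\nabla\y\|_{\mathbb{L}^2}^2+\frac{2\tau^2}{\alpha}\|\U\|_{\mathbb{H}^{-1}}^2$ and the pressure-like term via $|(\zeta,\text{div}\,\y)_{\mathrm{L}^2}|\leq\sqrt{2}\|\zeta\|_{\mathrm{L}^2}\|\nabla\y\|_{\mathbb{L}^2}\leq\frac{\alpha}{8}\|\nabla\y\|_{\mathbb{L}^2}^2+\frac{4}{\alpha}\|\zeta\|_{\mathrm{L}^2}^2$. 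This produces an inequality entirely analogous to \eqref{216}.

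\textbf{Step 3 (Energy estimate for $\zeta$).} Pairing the second equation with $\zeta$ in $\mathrm{L}^2$, expanding $\text{div}(h\y)=h\,\text{div}\,\y+\nabla h\cdot\y$, and using $\|h\|_{\mathrm{L}^\infty}\leq\mu$, $\|\nabla h\|_{\mathbb{L}^\infty}\leq M$ from \eqref{hes} together with $\|\text{div}\,\y\|_{\mathrm{L}^2}\leq\sqrt{2}\|\nabla\y\|_{\mathbb{L}^2}$, the Poincar\'e inequality, and Young's inequality, yields (just as in \eqref{217})
\begin{equation*}
\tfrac12\tfrac{\mathrm{d}}{\mathrm{d}t}\|\zeta\|_{\mathrm{L}^2}^2\leq\tfrac{\alpha}{8}\|\nabla\y\|_{\mathbb{L}^2}^2+\tfrac{M}{2}\|\y\|_{\mathbb{L}^2}^2+\bigl(\tfrac{4\mu^2}{\alpha}+\tfrac{M}{2}\bigr)\|\zeta\|_{\mathrm{L}^2}^2.
\end{equation*}

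\textbf{Step 4 (Combination and Gronwall).} Adding the two inequalities, the $\|\nabla\y\|_{\mathbb{L}^2}^2$ terms combine to give a net $\alpha\|\nabla\y\|_{\mathbb{L}^2}^2$ on the left after absorption of $3\alpha/8+\alpha/8=\alpha/2$ on the right. Integrating from $0$ to $t$ with vanishing initial data gives exactly the template of \eqref{218} with $\frac{4\tau^2}{\alpha}\int_0^T\|\U\|_{\mathbb{H}^{-1}}^2\,\mathrm{d}t$ in place of the forcing terms, and the constant $\frac{4}{\alpha}(2+\mu^2)+M$ multiplying the Gronwall kernel. Applying Gronwall's lemma delivers \eqref{325a}.

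The only subtlety is ensuring that no residual dependence on $\u_{\U^*+\tau\U}$ or $\u_{\U^*}$ creeps in through the nonlinear term; this is precisely where the monotonicity (iii) of Lemma \ref{lem2.3} is decisive, since the na\"ive Lipschitz bound (v) would force us to pay a factor of $(\|\u_{\U^*+\tau\U}\|_{\mathbb{L}^4}+\|\u_{\U^*}\|_{\mathbb{L}^4}+\|\w^0\|_{\mathbb{L}^4})$ and spoil the clean Gronwall constant. Monotonicity lets us simply discard the nonlinear term and obtain a linear-looking estimate with explicit constants, exactly matching the form stated in \eqref{325a}.
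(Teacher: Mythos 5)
Your proof is correct and follows essentially the same route as the paper (difference system, energy estimates for each equation, Gronwall), the only variation being that you drop the nonlinear term via the monotonicity property (iii) of Lemma \ref{lem2.3}, whereas the paper writes $\B(\u_{\U^*+\tau\U})-\B(\u_{\U^*})=\B'(\theta\u_{\U^*}+(1-\theta)\u_{\U^*+\tau\U})\widetilde{\u}$ by Taylor's formula and then uses (vi); both simply discard the term by its sign, so the arguments are equivalent. One bookkeeping remark: with your choice of $\frac{\alpha}{8}$ in Step 3 the Gronwall exponent comes out as $\frac{8}{\alpha}(1+\mu^2)+M$ rather than the stated $\frac{4}{\alpha}\left(2+\mu^2\right)+M$, so to reproduce the exact constant in \eqref{325a} you should absorb the $\sqrt{2}\mu\|\nabla\y\|_{\mathbb{L}^2}\|\zeta\|_{\mathrm{L}^2}$ term with $\frac{\alpha}{4}$ as in \eqref{323} (the leftover gradient coefficient is still at least $\alpha$).
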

\begin{proof}
	Since $(\u_{\U^*+\tau \U},\xi_{\U^*+\tau \U})$ and $(\u_{\U^*},\xi_{\U^*})$ are the unique weak solutions of the system \eqref{a1} corresponding to the  controls $\U^*+\tau \U$ and $\U^*$, respectively, we know that $(\widetilde{\u},\widetilde{\xi})=(\u_{\U^*+\tau \U}-\u_{\U^*},\xi_{\U^*+\tau \U}-\xi_{\U^*})$  satisfy the following system:
	\begin{equation}\label{11}
	\left\{
	\begin{aligned}
	\frac{\partial \widetilde{\mathbf{u}}(t)}{\partial t}+\A \widetilde{\mathbf{u}}(t)+\B(\u_{\U^*+\tau \U}(t))-\B(\u_{\U^*}(t))+\nabla  \widetilde{\xi} (t)&=\tau \U(t),\ \text{ in }\ \H^{-1}(\Omega),\\
	\frac{\partial \widetilde{\xi}(t)}{\partial t}+\mathrm{div}(h \widetilde{\u}(t))&=0, \ \text{ in }\ \mathrm{L}^2(\Omega),\\
	\widetilde{\u}(0)&=\mathbf{0},\ \widetilde{\xi}(0)=0, 
	\end{aligned}
	\right.
	\end{equation}
	for a.e. $t\in[0,T]$. 
	Let us take inner product with $\widetilde{\u}(\cdot)$ to the first equation in \eqref{11} to obtain 
	\begin{align}\label{322}
	\frac{1}{2}&\frac{\d }{\d t}\|\widetilde{\u}(t)\|_{\mathbb{L}^2}^2+\alpha\|\nabla\widetilde{\u}(t)\|_{\mathbb{L}^2}^2\nonumber\\&=-(\B(\u_{\U^*+\tau \U}(t))-\B(\u_{\U^*}(t)),\widetilde{\u}(t))_{\mathbb{L}^2}-\langle\nabla  \widetilde{\xi} (t),\widetilde{\u}(t)\rangle+\tau\langle\U(t),\widetilde{\u}(t)\rangle.
	\end{align}
	Using an integration by parts, H\"older's and Young's inequalities, we estimate the final two terms from the right hand side of the equality \eqref{322} as 
	\begin{align}
	-\langle\nabla  \widetilde{\xi} ,\widetilde{\u}\rangle+\tau\langle\U,\widetilde{\u}\rangle	&= (\widetilde{\xi},\mathrm{div\ }\widetilde{\u})_{\mathrm{L}^2}+\tau\langle\U,\widetilde{\u}\rangle\nonumber\\&\leq \|\widetilde{\xi}\|_{\mathrm{L}^2}\|\mathrm{div\ }\widetilde{\u}\|_{\mathrm{L}^2}+\tau\|\U\|_{\H^{-1}}\|\nabla\widetilde{\u}\|_{\mathbb{L}^2}\nonumber\\&\leq \frac{\alpha}{4}\|\nabla\widetilde{\u}\|_{\mathbb{L}^2}^2+\frac{4}{\alpha}\|\widetilde{\xi}\|_{\mathrm{L}^2}^2+\frac{2\tau^2}{\alpha}\|\U\|_{\H^{-1}}^2. 
	\end{align}
	We now take inner product with $\widetilde{\xi}(\cdot)$ to the second equation in \eqref{11} to find 
	\begin{align}\label{323}
	\frac{1}{2}\frac{\d}{\d t}\|\widetilde{\xi}(t)\|_{\mathrm{L}^2}^2&=-(h\mathrm{div\ }\widetilde{\u}(t),\widetilde{\xi}(t))_{\mathrm{L}^2}-(\nabla h\cdot\widetilde{\u}(t),\widetilde{\xi}(t))_{\mathrm{L}^2}\nonumber\\&\leq \|h\mathrm{div\ }\widetilde{\u}(t)\|_{\mathrm{L}^2}\|\widetilde{\xi}(t)\|_{\mathrm{L}^2}+\|\nabla h\cdot\widetilde{\u}(t)\|_{\mathrm{L}^2}\|\widetilde{\xi}(t)\|_{\mathrm{L}^2}\nonumber\\&\leq \|h\|_{\mathrm{L}^{\infty}}\|\mathrm{div\ }\widetilde{\u}(t)\|_{\mathrm{L}^2}\|\widetilde{\xi}(t)\|_{\mathrm{L}^2}+\|\nabla h\|_{\mathbb{L}^{\infty}}\|\widetilde{\u}(t)\|_{\mathbb{L}^2}\|\widetilde{\xi}(t)\|_{\mathrm{L}^2}\nonumber\\&\leq \frac{\alpha}{4}\|\nabla\widetilde{\u}(t)\|_{\mathbb{L}^2}^2+\left(\frac{2\mu^2}{\alpha}+\frac{M}{2}\right)\|\widetilde{\xi}(t)\|_{\mathrm{L}^2}^2+\frac{M}{2}\|\widetilde{\u}(t)\|_{\mathbb{L}^2}^2,
	\end{align}
	where we used the Cauchy-Schwarz inequality, Holder's and Young's inequalities. Adding \eqref{322} and \eqref{323} together, we get 
	\begin{align*}
	&\frac{\d}{\d t}\left(\|\widetilde{\u}(t)\|_{\mathbb{L}^2}^2+\|\widetilde{\xi}(t)\|_{\mathrm{L}^2}^2\right)+\alpha\|\nabla\widetilde{\u}(t)\|_{\mathbb{L}^2}^2+(\B(\u_{\U^*+\tau \U}(t))-\B(\u_{\U^*}(t)),\widetilde{\u}(t))_{\mathbb{L}^2}\nonumber\\&\leq M\|\widetilde{\u}(t)\|_{\mathbb{L}^2}^2+ \left[\frac{4}{\alpha}\left(2+\mu^2\right)+M\right]\|\widetilde{\xi}(t)\|_{\mathrm{L}^2}^2+\frac{4\tau^2}{\alpha}\|\U(t)\|_{\H^{-1}}^2.
	\end{align*}
	Integrating the above inequality from $0$ to $t$, we obtain 
	\begin{align}\label{324}
	&\|\widetilde{\u}(t)\|_{\mathbb{L}^2}^2+\|\widetilde{\xi}(t)\|_{\mathrm{L}^2}^2+\alpha\int_0^t\|\nabla\widetilde{\u}(s)\|_{\mathbb{L}^2}^2\d s\nonumber\\&\quad+2\int_0^t(\B(\u_{\U^*+\tau \U}(s))-\B(\u_{\U^*}(s)),\widetilde{\u}(s))_{\mathbb{L}^2}\d s\nonumber\\&\leq \left[\frac{4}{\alpha}\left(2+\mu^2\right)+M\right]\int_0^t\left(\|\widetilde{\u}(s)\|_{\mathbb{L}^2}^2+\|\widetilde{\xi}(s)\|_{\mathrm{L}^2}^2\right)\d s+\frac{4\tau^2}{\alpha}\int_0^t\|\U(s)\|_{\H^{-1}}^2\d s.
	\end{align}
	Using the Lemma \ref{lem2.3} (iii), we know that 
	\begin{align}
	\int_0^t(\B(\u_{\U^*+\tau \U}(s))-\B(\u_{\U^*}(s)),\widetilde{\u}(s))_{\mathbb{L}^2}\d s\geq 0,\ \text{ for all }\ t\in[0,T]. 
	\end{align}
	An application of Gronwall's inequality in \eqref{324} yields 
	\begin{align}\label{325}
	&\|\widetilde{\u}(t)\|_{\mathbb{L}^2}^2+\|\widetilde{\xi}(t)\|_{\mathrm{L}^2}^2\leq\left\{\frac{4\tau^2}{\alpha}\int_0^T\|\U(t)\|_{\H^{-1}}^2\d t\right\}e^{ \left[\frac{4}{\alpha}\left(2+\mu^2\right)+M\right]T},
	\end{align}
	for all $t\in[0,T]$, which completes the proof. 
\end{proof}
The following lemma gives the differentiability of the mapping $\U \mapsto (\u_{\U},\xi_{\U})$  from $\mathscr{U}_{\mathrm{ad}}$ into   $\left(\mathrm{C}([0,T];\L^2(\Omega))\cap\mathrm{L}^{2}(0,T;\H_0^1(\Omega))\right) \times \mathrm{C}([0,T];\mathrm{L}^2(\Omega))$.
\begin{lemma}\label{lem3.8}
	Let $(\u_0, \xi_0)\in(\mathbb{L}^2(\Omega)\times\mathrm{L}^2(\Omega))$ and $\f\in\mathrm{L}^2(0,T;\mathbb{H}^{-1}(\Omega))$ be given. Then the mapping $\U \mapsto (\u_{\U},\xi_{\U})$  from $\mathscr{U}_{\mathrm{ad}}=\mathrm{L}^{2}(0,T;\mathbb{H}^{-1}(\Omega))$ into the function space $\left(\mathrm{C}([0,T];\L^2(\Omega))\cap\mathrm{L}^{2}(0,T;\H_0^1(\Omega))\right) \times \mathrm{C}([0,T];\mathrm{L}^2(\Omega))$ is Gateaux differentiable. Furthermore, we have 
	\begin{align}
	\label{limit}
	\left\{\lim_{\tau\downarrow 0}\frac{\u_{\U^*+\tau \U} - \u_{\U^*}}{\tau}, \lim_{\tau\downarrow 0}\frac{\xi_{\U^*+\tau \U} - \xi_{\U^*}}{\tau} \right\}= \{\mathfrak{w} , \eta\}, \end{align}
	where $(\mathfrak{w}, \eta)$ is the \emph{unique weak solution} of the linearized system:
	\begin{equation}\label{4a}
	\left\{
	\begin{aligned}
	\frac{\partial \mathfrak{w}(t)}{\partial t}+\A\mathfrak{w}(t)+\B'(\mathbf{u}_{\U^*}(t))\mathfrak{w}(t)+\nabla \eta (t)&=\U(t),\ \text{ in }\ \H^{-1}(\Omega),\\
	\frac{\partial \eta(t)}{\partial t}+\mathrm{div}(h \mathfrak{w}(t))&=0, \ \text{ in }\ \mathrm{L}^2(\Omega),\\
	\mathfrak{w}(0)&=\mathbf{0},\ \eta(0)=0, 
	\end{aligned}
	\right.
	\end{equation}
	for a.e. $t\in[0,T]$ and the pairs $(\u_{\U^*},\xi_{\U^*})$  and $(\u_{\U^*+\tau\U},\xi_{\U^*+\tau\U})$ are the unique weak solutions of the controlled system (\ref{1}) with the controls $\U^*$ and $\U^*+\tau\U$, respectively.
	That is, we have 
	\begin{equation}\label{lim}
	\left\{
	\begin{aligned}
	\lim_{\tau\downarrow 0}\frac{\|\u_{\U^*+\tau \U} (t)- \u_{\U^*}(t)-\tau\mathfrak{w}(t)\|_{\L^2}}{\tau}&=0, \ \text{ for all }\ t\in[0,T], \\
	\lim_{\tau\downarrow 0}\frac{\|\u_{\U^*+\tau \U} - \u_{\U^*}-\tau\mathfrak{w}\|_{\mathrm{L}^{2}(0,T;\H_0^1(\Omega))}}{\tau}&=0,\\
	\lim_{\tau\downarrow 0}\frac{\|\xi_{\U^*+\tau \U} (t)- \xi_{\U^*}(t)-\tau\eta(t)\|_{\mathrm{L}^{2}}}{\tau}&=0, \ \text{ for all }\ t\in[0,T].
	\end{aligned}
	\right.
	\end{equation}
\end{lemma}

\begin{proof}
	Let us define 
	\begin{align*}
	(\y,\varrho):=(\u_{\U^*+\tau\U}-\u_{\U^*}-\tau\mathfrak{w},\xi_{\U^*+\tau\U}-\xi_{\U^*}-\tau\eta).
	\end{align*}
	Then $	(\y,\varrho)$ satisfies the following system:
	\begin{equation}\label{4b}
	\left\{
	\begin{aligned}
	&	\frac{\partial \y(t)}{\partial t}+\A\y(t)+\B'(\mathbf{u}_{\U^*}(t))\y(t)+\nabla \varrho (t)\\&=\B'(\u_{\U^*}(t))(\u_{\U^*+\tau\U}(t)-\u_{\U^*}(t)) -\B(\u_{\U^*+\tau\U}(t))+\B(\u_{\U^*}(t)),\ \text{ in }\ \H^{-1}(\Omega),\\
	&	\frac{\partial \varrho(t)}{\partial t}+\mathrm{div}(h \y(t))=0, \ \text{ in }\ \mathrm{L}^2(\Omega),\\
	&	\y(0)=\mathbf{0},\ \varrho(0)=0, 
	\end{aligned}
	\right.
	\end{equation}
	for a.e. $t\in[0,T]$. Remember that the the term $\B'(\u_{\U^*})(\u_{\U^*+\tau\U}-\u_{\U^*}) -\B(\u_{\U^*+\tau\U})+\B(\u_{\U^*})\in\mathrm{L}^2(0,T;\L^2(\Omega))$ and hence the system \eqref{4b} has a unique weak solution with $$	(\y,\varrho)\in(\mathrm{C}([0,T];\mathbb{L}^2(\Omega))\cap\mathrm{L}^2(0,T;\mathbb{H}_0^1(\Omega)))\times\mathrm{C}([0,T];\mathrm{L}^2(\Omega)).$$	Using Taylor's series (see Theorem 7.9.1, \cite{PGC}), the right hand side of the first equation in \eqref{4b} can be written as 
	\begin{align*}
	&\|\mathbf{N}(\u_{\U^*},\u_{\U^*+\tau\U})\|_{\L^2}\nonumber\\&:=\|	\B'(\u_{\U^*})(\u_{\U^*+\tau\U}-\u_{\U^*}) -\B(\u_{\U^*+\tau\U})+\B(\u_{\U^*})\|_{\L^2}\nonumber\\&=	\left\|\left[\B'(\u_{\U^*})-\int_0^1\B'(\theta\u_{\U^*+\tau\U}+(1-\theta)\u_{\U^*})\d\theta\right](\u_{\U^*+\tau\U}-\u_{\U^*})\right\|_{\L^2}\nonumber\\&=\left\| 2\gamma\int_0^1\left[|\u_{\U^*}+\w^0|-|\theta\u_{\U^*+\tau\U}+(1-\theta)\u_{\U^*}+\w^0|\right]\d\theta(\u_{\U^*+\tau\U}-\u_{\U^*})]\right\|_{\L^2}\nonumber\\&\leq 2\left\|\gamma\int_0^1|\u_{\U^*}+\w^0-(\theta\u_{\U^*+\tau\U}+(1-\theta)\u_{\U^*}+\w^0)|\d\theta|\u_{\U^*+\tau\U}-\u_{\U^*}|\right\|_{\L^2}\nonumber\\&\leq 2\|\gamma\|_{\mathrm{L}^{\infty}}\sup_{0<\theta<1}|\theta|\||\u_{\U^*+\tau\U}-\u_{\U^*}|^2\|_{\L^2}\nonumber\\&\leq \frac{2r}{\lambda}\|\u_{\U^*+\tau\U}-\u_{\U^*}\|_{\mathbb{L}^4}^2\nonumber\\&\leq \frac{2\sqrt{2}r}{\lambda}\|\u_{\U^*+\tau\U}-\u_{\U^*}\|_{\mathbb{L}^2}\|\nabla(\u_{\U^*+\tau\U}-\u_{\U^*})\|_{\mathbb{L}^2},
	\end{align*}
	and 
	\begin{align}\label{3.30}
	&\|\mathbf{N}(\u_{\U^*},\u_{\U^*+\tau\U})\|_{\mathrm{L}^2(0,T;\L^2(\Omega))}\nonumber\\&\leq \frac{2\sqrt{2}r}{\lambda}\|\u_{\U^*+\tau\U}-\u_{\U^*}\|_{\mathrm{L}^{\infty}(0,T;\mathbb{L}^2(\Omega))}\|\nabla(\u_{\U^*+\tau\U}-\u_{\U^*})\|_{\mathrm{L}^{2}(0,T;\mathbb{L}^2(\Omega))}\nonumber\\&\leq \left\{\frac{8\sqrt{2}\tau^2r}{\alpha\lambda}\int_0^T\|\U^*(t)\|_{\H^{-1}}^2\d t\right\}e^{ \left[\frac{8}{\alpha}\left(2+\mu^2\right)+M\right]T},
	\end{align}
	using \eqref{325a}. A calculation similar to \eqref{218} gives 
	\begin{align}\label{330}
	&\|\y(t)\|_{\mathbb{L}^2}^2+\|\varrho(t)\|_{\mathrm{L}^2}^2+\alpha\int_0^t\|\nabla\y(s)\|_{\mathbb{L}^2}^2\d s\nonumber\\&\leq \int_0^t	\|\mathbf{N}(\u_{\U^*}(s),\u_{\U^*+\tau\U}(s))\|_{\mathbb{L}^2}^2\d s+K_1\int_0^t\left(\|\y(s)\|_{\mathbb{L}^2}^2+\|\varrho(s)\|_{\mathrm{L}^2}^2\right)\d s,
	\end{align}
	where $K_1=\max\left\{M+1,M+\frac{4\mu^2}{\alpha}\right\}$. An application of Gronwall's inequality in \eqref{330}  gives
	\begin{align}\label{332}
	&\|\y(t)\|_{\mathbb{L}^2}^2+\|\varrho(t)\|_{\mathrm{L}^2}^2+\alpha\int_0^t\|\nabla\y(s)\|_{\mathbb{L}^2}^2\d s\nonumber\\& \leq \left\{\int_0^T	\|\mathbf{N}(\u_{\U^*}(t),\u_{\U^*+\tau\U}(t))\|_{\mathbb{L}^2}^2\d t\right\}e^{K_1T},
	\end{align}
	for all  $t\in[0,T]$. Using \eqref{3.30} in \eqref{330}, it can be easily seen that 
	\begin{align}\label{333}
	\|\u_{\U^*+\tau \U}(t) - \u_{\U^*}(t)-\tau\mathfrak{w}(t)\|_{\L^2}&\leq \|\mathbf{N}(\u_{\U^*},\u_{\U^*+\tau\U})\|_{\mathrm{L}^2(0,T;\L^2(\Omega))} e^{\frac{K_1T}{2} }\nonumber\\&\leq \left\{\frac{8\sqrt{2}\tau^2r}{\alpha\lambda}\int_0^T\|\U^*(t)\|_{\H^{-1}}^2\d t\right\}e^{ \frac{(K_1+K_2)T}{2}},
	\end{align}
	for all $t\in[0,T]$,	where $K_2=\left[\frac{8}{\alpha}\left(2+\mu^2\right)+M\right]$. Then, we have 
	\begin{align}
	&	\lim_{\tau\downarrow 0}\frac{\|\u_{\U^*+\tau \U} (t)- \u_{\U^*}(t)-\tau\mathfrak{w}(t)\|_{\L^2}}{\tau}\nonumber\\&\leq \lim_{\tau\downarrow 0}\left\{\frac{8\sqrt{2}\tau r}{\alpha\lambda}\int_0^T\|\U^*(t)\|_{\H^{-1}}^2\d t\right\}e^{ \frac{(K_1+K_2)T}{2}}=0, 
	\end{align}
	for all $t\in[0,T]$	and 
	\begin{align}
	\lim_{\tau\downarrow 0}\frac{\|\u_{\U^*+\tau \U} - \u_{\U^*}-\tau\mathfrak{w}\|_{\mathrm{L}^2(0,T;\H_0^1(\Omega))}}{\tau}=0.
	\end{align}
	Using \eqref{332}, a calculation similar to \eqref{333} also shows that $$\lim_{\tau\downarrow 0}\frac{\|\xi_{\U^*+\tau \U} (t)- \xi_{\U^*}(t)-\tau\eta(t)\|_{\mathrm{L}^2}}{\tau}=0,$$ for all $t\in[0,T]$, which completes the proof. 
\end{proof}
Next, we prove our main result (Theorem \ref{main}) using Lemmas \ref{lem3.7} and \ref{lem3.8}. Note that the norms appearing in the cost functional \eqref{cost} are Gateaux differentiable and hence we don't need to consider any approximate cost functional or methods which use needle or spike perturbations in this work.

\begin{proof}[Proof of Theorem \ref{main}]
	Let $(\u^*,\xi^*,\U^*)\in\mathscr{A}_{\mathrm{ad}}$ be the optimal triplet of the control problem \eqref{control problem} obtained in Theorem \ref{optimal}.  Let $\mathcal{G}(\U)=\mathscr{J}(\u_{\U},\xi_{\U},{\U})$, where $({\u}_{\U},\xi_{\U},\U)$ is the unique weak solution of the controlled system \eqref{a1} with the control $\U\in\H^{-1}(\Omega)$, for a.e. $t\in[0,T]$.  Note that $\U^*+\tau\U\in\mathscr{U}_{\mathrm{ad}}$, for all $\tau$ and $(\u_{\U^*+\tau\U},\xi_{\U^*+\tau\U},{\U^*+\tau\U})\in\mathscr{A}_{\mathrm{ad}}$. Then, we have
	\begin{align*}
	&\mathcal{G}(\U^*+ \tau \U) - \mathcal{G}(\U^*) \\&= \mathscr{J}(\u_{\U^* + \tau \U},\xi_{\U^* + \tau \U},{\U^* + \tau \U})- \mathscr{J}(\u_{\U^*},\xi_{\U^* },{\U^*})  \\       
	&= \frac{1}{2} \int_0^T  \|\u_{\U^* + \tau \U}(t)-\u_d(t)\|^2_{\L^2} \d t+\int_0^T\|\nabla\u_{\U^* + \tau \U}(t)\|_{\L^2}^2\d t\nonumber\\&\quad+ \frac{1}{2} \int_0^T  \|\xi_{\U^* + \tau \U}(t)-\xi_d(t)\|^2_{\mathrm{L}^2} \d t+\frac{1}{2} \int_0^T  \|{\U^*(t) + \tau \U(t)}\|^2_{\H^{-1}} \d t\nonumber\\&\quad+\frac{1}{2}\|\u_{\U^* + \tau \U}(T)-\u^f\|^2_{\L^2}+\frac{1}{2}\|\xi_{\U^* + \tau \U}(T)-\xi^f\|^2_{\mathrm{L}^2},\nonumber  \\
	& \quad -\frac{1}{2} \int_0^T  \|\u_{\U^*}(t) -\u_d(t)\|^2_{\L^2}\d t-\frac{1}{2}\int_0^T\|\nabla\u_{\U^*}(t)\|_{\L^2}^2\d t \nonumber\\&\quad- \frac{1}{2} \int_0^T  \|\xi_{\U^*}(t) -\xi_d(t)\|^2_{\L^2} \d t - \frac{1}{2} \int_0^T  \| \U^*(t)\|^2_{\H^{-1}} \d t-\frac{1}{2}\|\u_{\U^*}(T)-\u^f\|^2_{\L^2}\nonumber\\&\quad-\frac{1}{2}\|\xi_{\U^* }(T)-\xi^f\|^2_{\mathrm{L}^2}
	\\ &= \frac{1}{2}\int_0^T (\u_{\U^* + \tau \U}(t) - \u_{\U^*}(t),\u_{\U^*+\tau \U}(t) + \u_{\U^*}(t) -2\u_d(t))_{\mathbb{L}^2}\d t \nonumber\\&\quad+\int_0^T(\nabla(\u_{\U^*+\tau\U}(t)-\u_{\U^*}(t)),\nabla(\u_{\U^*+\tau\U}(t)+\u_{\U^*}(t)))_{\L^2}\d t  \\&
	\quad + \frac{1}{2}\int_0^T (\xi_{\U^* + \tau \U} (t)- \xi_{\U^*}(t),\xi_{\U^*+\tau \U}(t) + \xi_{\U^*}(t)-2\xi_d(t))_{\mathrm{L}^2}\d t \\
	&\quad  + \frac{1}{2} \int_0^T  \|(-\Delta)^{-1/2}(\U^*(t) + \tau \U(t))\|^2_{\L^{2}} \d t - \frac{1}{2} \int_0^T  \| (-\Delta)^{-1/2}\U^*(t)\|^2_{\L^{2}}\d t \nonumber\\&\quad+\frac{1}{2}(\u_{\U^* + \tau \U}(T)-\u_{\U^* }(T),\u_{\U^* + \tau \U}(T)+\u_{\U^*}(T)-2\u^f)_{\L^2}\nonumber\\&\quad+\frac{1}{2}(\xi_{\U^* + \tau \U}(T)-\xi_{\U^* }(T),\xi_{\U^* + \tau \U}(T)+\xi_{\U^*}(T)-2\xi^f)_{\mathrm{L}^2} .
	\end{align*}
	From the above calculations, it is immediate that 
	\begin{align}\label{lam}
	&	\mathcal{G}(\U^*+ \tau \U) - \mathcal{G}(\U^*)\nonumber \\&= \frac{1}{2}\int_0^T  \|\u_{\U^*+\tau \U} (t)- \u_{\U^*}(t)\|^2_{\mathbb{L}^2} \d t+ \int_0^T (\u_{\U^*+\tau \U} (t)- \u_{\U^*}(t),\u_{\U^*}(t) -\u_d(t))_{\mathbb{L}^2}\d t \nonumber\\&\quad+\frac{1}{2}\int_0^T\|\nabla(\u_{\U^*+\tau \U} (t)- \u_{\U^*}(t))\|^2_{\L^2} \d t-\int_0^T\langle\u_{\U^*+\tau \U} (t)- \u_{\U^*}(t),\Delta\u_{\U^*}(t)\rangle\d t \nonumber \\
	&\quad+\frac{1}{2}\int_0^T  \|\xi_{\U^*+\tau \U}(t) - \xi_{\U^*}(t)\|^2_{\mathrm{L}^2} \d t+ \int_0^T (\xi_{\U^*+\tau \U}(t) - \xi_{\U^*}(t),\xi_{\U^*} (t)-\xi_d(t))_{\mathrm{L}^2}\d t\nonumber\\
	& \quad+  \int_0^T \tau ((-\Delta)^{-1/2}\U(t),(-\Delta)^{-1/2}\U^*(t))_{\mathbb{L}^2} \d t+ \frac{\tau^2}{2} \int_0^T  \|(-\Delta)^{-1/2}\U(t)\|_{\mathbb{L}^2}^2   \d t \nonumber\\&\quad+\frac{1}{2}\|\u_{\U^*+\tau \U} (T)- \u_{\U^*}(T)\|^2_{\L^2}+(\u_{\U^* + \tau \U}(T)-\u_{\U^* }(T),\u_{\U^*}(T)-\u^f)_{\L^2} \nonumber\\&\quad+\frac{1}{2}\|\xi_{\U^*+\tau \U} (T)- \xi_{\U^*}(T)\|^2_{\mathrm{L}^2}+(\xi_{\U^* + \tau \U}(T)-\xi_{\U^* }(T),\xi_{\U^*}(T)-\u^f)_{\mathrm{L}^2} .
	\end{align}
	Using the estimate  \eqref{325a} (see Lemma \ref{lem3.7}), we know that $\|\u_{\U^*+\tau \U} - \u_{\U^*}\|^2_{\mathrm{L}^2(0,T;\L^2(\Omega))}$,  $\|\xi_{\U^*+\tau \U} - \xi_{\U^*}\|_{\mathrm{L}^2(0,T;\mathrm{L}^2(\Omega))}^2$, $\|\u_{\U^*+\tau \U} (T)- \u_{\U^*}(T)\|^2_{\L^2}$ and  $\|\xi_{\U^*+\tau \U} (T)- \xi_{\U^*}(T)\|^2_{\mathrm{L}^2}$  can be estimated by $C\tau^2 \|\U\|^2_{\mathrm{L}^{2}(0,T;\H^{-1}(\Omega))}$. Thus dividing by $\tau$, and then sending $\tau \downarrow 0$, we easily have  $\|\u_{\U^*+\tau \U} - \u_{\U^*}\|_{\mathrm{L}^2(0,T;\L^2(\Omega))} $,   $\|\xi_{\U^*+\tau \U} - \xi_{\U^*}\|_{\mathrm{L}^2(0,T;\mathrm{L}^2(\Omega))} $, $\|\u_{\U^*+\tau \U} (T)- \u_{\U^*}(T)\|_{\L^2}$ and  $\|\xi_{\U^*+\tau \U} (T)- \xi_{\U^*}(T)\|_{\mathrm{L}^2}$  $ \rightarrow 0$ as $\tau \downarrow 0$. 
	
	Let us denote the Gateaux derivative of $\mathcal{G}$ at $\U^*$ in the direction of $\U\in\H^{-1}(\Omega)$ by $\langle\mathcal{G}'(\U^*),\U\rangle$. Let $(\mathfrak{w},\eta)$ satisfy the linearized system \eqref{4a} with the control $\U$.  From Lemma \ref{lem3.8}, we also have the convergences (see \eqref{lim}): 
	\begin{align*}
	&	\lim_{\tau\downarrow 0}\frac{\|\u_{\U^*+\tau \U} (t)- \u_{\U^*}(t)-\tau\mathfrak{w}(t)\|_{\L^2(\Omega)}}{\tau}=0,\\ & \text{ and }\
	\lim_{\tau\downarrow 0}\frac{\|\xi_{\U^*+\tau \U} (t)- \xi_{\U^*}(t)-\tau\eta(t)\|_{\mathrm{L}^{2}}}{\tau}=0, 
	\end{align*}
	for all $t\in[0,T],$ and 
	\begin{align*}
	\lim_{\tau\downarrow 0}\frac{\|\u_{\U^*+\tau \U} - \u_{\U^*}-\tau\mathfrak{w}\|_{\mathrm{L}^2(0,T;\H_0^1(\Omega))}}{\tau}=0.
	\end{align*}
	Dividing by $\tau$ and then taking $\tau \downarrow 0$ in (\ref{lam}), we obtain
	\begin{align}\label{329}
	0 &\leq \langle\mathcal{G}'(\U^*), \U\rangle = \underset{\tau \downarrow 0}{\lim} \frac{\mathcal{G}(\U^*+ \tau \U)- \mathcal{G}(\U^*)}{\tau} \nonumber\\
	&= \int_0^T  (\mathfrak{w}(t),\u_{\U^*}(t) -\u_d(t))_{\mathbb{L}^2}\d t-\int_0^T\langle\mathfrak{w}(t),\Delta\u_{\U^*}(t)\rangle\d t \nonumber\\&\quad+ \int_0^T   (\eta(t),\xi_{\U^*}(t) -\xi_d(t))_{\mathrm{L}^2} \d t +  \int_0^T  ((-\Delta)^{-1/2}\U(t),(-\Delta)^{-1/2}\U^*(t))_{\mathbb{L}^2} \d t\nonumber\\&\quad+(\mathfrak{w}(T),\u_{\U^*}(T)-\u^f)_{\L^2}+(\eta(T),\xi_{\U^*}(T)-\xi^f)_{\mathrm{L}^2}
	\nonumber\\&= \int_0^T  \langle \mathfrak{w}(t),-\p_t(t)+\widetilde{\A}\p(t)+\B'(\u_{\U^*}(t))\p(t) -h\nabla\varphi(t)\rangle\d t \nonumber\\&\quad+\int_0^T  (\eta(t), -\varphi_t(t)  -\mathrm{div\ }\p(t))_{\mathrm{L}^2}\d t  +  \int_0^T  ((-\Delta)^{-1/2}\U(t),(-\Delta)^{-1/2}\U^*(t))_{\mathbb{L}^2} \d t\nonumber\\&= \int_0^T\langle\mathfrak{w}_t(t)+\A\mathfrak{w}(t)+\B'(\u_{\U^*}(t))\mathfrak{w}(t)+\nabla \eta (t),\p(t)\rangle\d t\nonumber\\&\quad+\int_0^T(\eta_t(t)+\mathrm{div}(h \mathfrak{w}(t)),\varphi(t))_{\mathrm{L}^2}+  \int_0^T  ((-\Delta)^{-1/2}\U(t),(-\Delta)^{-1/2}\U^*(t))_{\mathbb{L}^2} \d t\nonumber\\&= \int_0^T\langle \U (t),\p(t)\rangle\d t  +  \int_0^T  \langle\U(t),(-\Delta)\U^*(t)\rangle \d t,
	\end{align}  
	where we used the adjoint system \eqref{adj} and performed an integration by parts.  We also
	used the equation satisfied by $(\mathfrak{w},\eta)$ with the control $\U$ (see \eqref{4a}). Thus, from \eqref{329}, we infer that  
	$$0 \leq \langle\mathcal{G}'(\U^*(t)), \U(t)\rangle= \int_0^T\langle\U (t),\p(t)\rangle\d t  +  \int_0^T  \langle \U(t),(-\Delta)^{-1}\U^*(t)\rangle \d t.$$ 
	Similarly, if we take the directional derivative of $\mathcal{G}$ in the direction of $-\U\in\H^{-1}(\Omega)$, we  obtain  $\langle \mathcal{G}'(\U^*(t)), \U(t)\rangle \leq 0.$
	Hence, we obtain 
	$\langle \mathcal{G}'(\U^*(t)) , \U(t) \rangle= 0,$ and we infer that 
	\begin{align}\label{4.51}
	\int_0^T\langle\U (t),\p(t)+(-\Delta)^{-1}\U^*(t)\rangle \d t=0, \end{align} for all $\U\in\H^{-1}(\Omega)$. Thus, it is immediate that 
	\begin{align*}
	\langle\U(t),\p(t) + (-\Delta)^{-1}\U^*(t)\rangle &= 0, \ \text{ for all } \ \U \in \H^{-1}(\Omega),\  \text{  and  a.e. }\  t \in [0,T].
	\end{align*}
	Since the above equality is true for all $\U \in \H^{-1}(\Omega)$, we get $\p(t) +(-\Delta)^{-1} \U^*(t)=0$, a.e. $t\in[0,T]$ and hence we obtain   
	$
	\U^*(t)=\Delta\p(t)\in \H^{-1}(\Omega),  \text{ a.e. }  t \in [0,T].
	$ Thus,  there exists \emph{a unique weak solution} $(\p,\varphi)$ of the adjoint system \eqref{adj} such that for a.e. $t \in [0,T]$,  \eqref{3.41} is satisfied.
\end{proof}	

\begin{remark}
	(1). Let us formally discuss about the Pontryagin minimum principle in terms of the \emph{Hamiltonian formulation} and obtain the result \eqref{3.41}. 	We define the \emph{Lagrangian} as
	$$\mathscr{L}(\u,\xi,\U) := \frac{1}{2} (\|\u-\u_d\|^2_{\mathbb{L}^2} + \|\xi - \xi_d\|^2_{\mathrm{L}^2}+\|\nabla\u\|_{\L^2}^2+ \|\U\|^2_{\mathbb{H}^{-1}}), $$ so that $\mathscr{J}(\u,\xi,\U)=\int_0^T\mathscr{L}(\u(t),\xi(t),\U(t))\d t+\frac{1}{2}\|\u(T)-\u^f\|^2_{\L^2}+\frac{1}{2}\|\xi(T)-\xi^f\|^2_{\mathrm{L}^2}$. 
	We define the corresponding \emph{Hamiltonian} by
	$$\mathscr{H}(\u,\xi,\U,\p,\varphi):= \mathscr{L}(\u,\xi,\U) + \langle\p, \mathcal{N}_1(\u,\xi,\U) \rangle+ \langle\varphi, \mathcal{N}_2(\u,\xi)\rangle,$$ where $\mathcal{N}_1$ and $\mathcal{N}_2$ are defined in (\ref{n1n2}).
	Thus the Pontryagin minimum principle states that 
	\begin{eqnarray}\label{346}
	\mathscr{H}(\u^*(t),\xi^*(t),\U^*(t),\p(t),\varphi(t)) \leq \mathscr{H}(\u^*(t),\xi^*(t),\mathrm{W},\p(t),\varphi(t)),
	\end{eqnarray}
	for all $\mathrm{W} \in \H^{-1}(\Omega)$ and a.e. $t\in[0,T]$. Sometimes, it is obtained in the following integral form also (see \eqref{4.51}):
	\begin{eqnarray}
	\int_0^T	\mathscr{H}(\u^*(t),\xi^*(t),\U^*(t),\p(t),\varphi(t))\d t \leq \int_0^T\mathscr{H}(\u^*(t),\xi^*(t),\mathrm{W},\p(t),\varphi(t))\d t. 
	\end{eqnarray}
	From \eqref{346}, we infer that the following minimum principle is satisfied by the optimal triplet $(\u^*,\xi^*,\U^*)\in\mathscr{A}_{\mathrm{ad}}$ obtained in Theorem \ref{optimal}:
	\begin{equation}\label{pm}
	\frac{1}{2}\|\U^*(t)\|^2_{\mathbb{H}^{-1}}+\langle\p(t),\U^*(t) \rangle \leq \frac{1}{2}\|\mathrm{W}\|^2_{\mathbb{H}^{-1}}+\langle \p(t),\mathrm{W} \rangle,
	\end{equation} 
	for all  $\mathrm{W}\in \H^{-1}(\Omega),$ and a.e. $t\in[0,T]$. For $\mathscr{U}_{\mathrm{ad}}=\mathrm{L}^2(0,T;\H^{-1}(\Omega))$, from \eqref{pm}, we see that $-\p \in \partial \frac{1}{2}\|{\U^*}(t)\|^2_{\H^{-1}}$, where $\partial$ denotes the subdifferential.
	Since $\frac{1}{2}\|\cdot\|^2_{\H^{-1}}$ is Gateaux differentiable,  the subdifferential consists of a single point and it follows that
	\begin{equation*}
	-\p(t) = (-\Delta)^{-1}\U^*(t)\in \H_0^1(\Omega), \ \text{ a.e. }\  t \in [0,T].
	\end{equation*}
	The optimal control is given by $\U^*(t)=\Delta\p(t)\in\H^{-1}(\Omega)$, a.e. $t\in[0,T]$. 
	\vskip 0.2 cm
	(2). The problems involving admissible  control class $\mathscr{U}_{\mathrm{ad}}$ as a closed convex subset of $\mathrm{L}^2(0,T;\L^2(\Omega))$ and incorporating state constraints has been addressed in the work \cite{MTM2}. 
	\vskip 0.2 cm
	{(3). Let us now consider the cost functional as 
		\begin{equation}\label{dcost}
		\begin{aligned}
		\mathscr{J}(\u,\xi,\U) &:= \frac{1}{2} \int_0^Te^{-\widetilde{\lambda} t}\left( \|\u(t)-\u_d(t)\|^2_{\mathbb{L}^2}+\|\nabla\u(t)\|_{\L^2}^2+ \|\xi(t) -\xi_d(t)\|^2_{\mathrm{L}^2}\right) \d t\\&\quad+ \frac{1}{2} \int_0^Te^{-\widetilde{\lambda} t}\|\U(t)\|^2_{\mathbb{H}^{-1}}\d t,
		\end{aligned}
		\end{equation}
		where $\widetilde{\lambda}>0$ is the discount rate. Thus, for the Lagrangian $$\mathscr{L}(\u,\xi,\U) := \frac{1}{2} e^{-\widetilde{\lambda} t}(\|\u-\u_d\|^2_{\mathbb{L}^2} + \|\xi - \xi_d\|^2_{\mathrm{L}^2}+\|\nabla\u\|_{\L^2}^2+ \|\U\|^2_{\mathbb{H}^{-1}}), $$ and for an optimal triplet $(\u^*,\xi^*,\U^*)\in\mathscr{A}_{\mathrm{ad}},$ the Pontryagin minimum principle becomes (see \eqref{346} and \eqref{pm})
		\begin{equation}\label{pm1}
		\frac{1}{2}e^{-\widetilde{\lambda} t}\|\U^*(t)\|^2_{\mathbb{H}^{-1}}+\langle\p(t),\U^*(t) \rangle \leq \frac{1}{2}e^{-\widetilde{\lambda} t}\|\mathrm{W}\|^2_{\mathbb{H}^{-1}}+\langle \p(t),\mathrm{W} \rangle,
		\end{equation} 
		for all  $\mathrm{W}\in \H^{-1}(\Omega),$ and a.e. $t\in[0,T]$. In \eqref{pm1}, the adjoint variable $(\p(\cdot),\varphi(\cdot))$ satisfies the following system: 
		\begin{eqnarray}\label{dadj}
		\left\{
		\begin{aligned}
		-\frac{\partial \p(t)}{\partial t}+\widetilde{\A}\p(t)+\B'(\u(t))\p (t)-h\nabla\varphi(t)&= e^{-\widetilde{\lambda} t}(\u(t)-\u_d(t))\\&\quad-e^{-\widetilde{\lambda} t}\Delta\u(t), \ \text{ in }\ \H^{-1}(\Omega),\\
		-\frac{\partial \varphi(t) }{\partial t}  -\mathrm{div\ }\p(t)&= e^{-\widetilde{\lambda} t}(\xi(t) - \xi_d(t)),   \ \text{ in }\ \mathrm{L}^2(\Omega), \\  (\p(T),\varphi(T))&=(\mathbf{0},0),
		\end{aligned}
		\right.
		\end{eqnarray}
		for a.e. $t\in[0,T]$. 
		For $\mathscr{U}_{\mathrm{ad}}=\mathrm{L}^2(0,T;\H^{-1}(\Omega))$, following similarly as in the proof of Theorem \ref{main}, we obtain the optimal control as 
		\begin{align}
		\U^*(t)=e^{\widetilde{\lambda}t}\Delta\p(t)\in \H^{-1}(\Omega),  \text{ a.e. }  t \in [0,T].
		\end{align}
		Such discount factors appear in infinite horizon optimal control problems (dynamic programming methods, cf. Chapter 6.6, \cite{LiYo}).}
\end{remark}

\iffalse 
\begin{definition}
	Let $(\u,\xi,\U)\in\mathscr{A}_{\mathrm{ad}}$ be any admissible control. Then 
	\begin{enumerate}
		\item [(a)] $(\u^*,\xi^*,\U^*)\in\mathscr{A}_{\mathrm{ad}}$ is called a \emph{(globally) optimal triplet} of the problem \eqref{control problem}, if $\mathscr{J}(\u^*,\xi^*,\U^*)\leq\mathscr{J}(\u,\xi,\U)$, for all $(\u,\xi,\U)\in\mathscr{A}_{\mathrm{ad}}$, 
		\item [(b)] $(\u^*,\xi^*,\U^*)\in\mathscr{A}_{\mathrm{ad}}$ is called a \emph{locally optimal triplet} of the problem \eqref{control problem}, if there exists a $\delta>0$ such that $\mathscr{J}(\u^*,\xi^*,\U^*)\leq\mathscr{J}(\u,\xi,\U)$, for all $(\u,\xi,\U)\in\mathscr{A}_{\mathrm{ad}}$ with $\|\U-\U^*\|_{\mathrm{L}^2(0,T;\H^{-1}(\Omega))}<\delta$. 
	\end{enumerate}
\end{definition}
\fi 
\subsection{Uniqueness of optimal control in small time interval} In this subsection, we show the uniqueness of optimal control in small time interval for the optimal control problem \eqref{control problem}. Remember that the control to state mapping is nonlinear and getting a global in time unique optimal control is difficult. Thus, we are looking for a time $T$ such that this time ensures uniqueness of optimal control. If we choose the final time $T$ to be  sufficiently small, then the state equation for $(\u,\xi)$ differ from the corresponding linearized problem $(\mathfrak{w},\eta)$ slightly only.  In this case, the linearized state equation corresponding to $(\mathfrak{w},\eta)$  produces a strictly convex cost functional and the corresponding optimal control is unique.
\begin{theorem}\label{thm3.8}
	Let $(\u^*,\xi^*,\U^*)\in\mathscr{A}_{\mathrm{ad}}$ be an  \emph{optimal triplet} for the problem \eqref{control problem}. Then if the final time $T$ is sufficiently small such that 
	\begin{align}\label{347}
	\frac{C_{\Omega}}{\alpha^2} \left(\frac{r^2}{\lambda^2}C_{\Omega}K_T +1\right) e^{ \frac{C_{\Omega}T}{\alpha}\left(2+\mu^2+M\right)}<1,
	\end{align}
	where $K_T$ is defined in \eqref{40a},	is satisfied, then the optimal triplet $(\u^*,\xi^*,\U^*)\in\mathscr{A}_{\mathrm{ad}}$ obtained in the Theorem \ref{optimal}  is unique. 
\end{theorem}
\begin{proof}
	Let us assume that there exists an another optimal triplet $(\widetilde{\u},\widetilde{\xi},\widetilde{\U})\in\mathscr{A}_{\mathrm{ad}}$. 
	From Theorem \ref{main}, we know that  $\U^*(t)=\Delta\p^*(t)\in\H^{-1}(\Omega)$, a.e. $t\in[0,T]$, and $\widetilde{\U}(t)=\Delta\widetilde{\p}(t)\in\H^{-1}(\Omega)$, a.e. $t\in[0,T]$. Note also that $(\p^*,\xi^*)$ and $(\widetilde{\p},\widetilde{\xi})$ satisfy the adjoint system \eqref{adj} with the forcing $(\u^*-\u_d-\Delta\u^*,\xi^*-\xi_d)$ and $(\widetilde{\u}-\u_d-\Delta\widetilde{\u},\widetilde{\xi}-\xi_d)$, respectively. Then, for a.e. $t\in[0,T]$, we have 
	\begin{align}
	\|\U^*(t)-\widetilde{\U}(t)\|_{\H^{-1}}&=\|\Delta\p^*(t)-\Delta\widetilde{\p}(t)\|_{\H^{-1}}= \|\p^*(t)-\widetilde{\p}(t)\|_{\H_0^1}.
	\end{align}
	Thus, we obtain
	\begin{align}\label{3.46}
	\int_0^T\|\U^*(t)-\widetilde{\U}(t)\|_{\H^{-1}}^2\d t= \int_0^T\|\p^*(t)-\widetilde{\p}(t)\|_{\H_0^1}^2\d t.
	\end{align}
	The quantity on the right hand side of the inequality \eqref{3.46} can be estimated similarly as in  \eqref{40}, as the system satisfied by $(\widetilde{\p}-\p^*,\widetilde{\xi}-\xi^*)$ is given by 
	\begin{eqnarray}\label{3.47}
	\left\{
	\begin{aligned}
	-\frac{\partial \p(t)}{\partial t}+\widetilde{\A}\p(t)+(\B'(\widetilde{\u}(t))\widetilde{\p}(t)&-\B'(\u^*(t))\p^* (t))-h\nabla\varphi(t)\\&= \u(t)-\Delta\u(t), \ \text{ in }\ \H^{-1}(\Omega),\\
	-\frac{\partial \varphi(t) }{\partial t}  -\mathrm{div\ }\p(t)&= \xi(t),   \ \text{ in }\ \mathrm{L}^2(\Omega), \\ (\p(T),\varphi(T))&=(\mathbf{0},0), 
	\end{aligned}
	\right.
	\end{eqnarray}
	where $\p=\widetilde{\p}-\p^*$,  $\varphi=\widetilde{\varphi}-\varphi^*$, $\u=\widetilde\u-\u^*$ and $\xi=\widetilde\xi-\xi^*$. Taking inner product with $\p(\cdot)$ to the first equation in \eqref{3.47} and $\varphi(\cdot)$ to the second equation in \eqref{3.47}, and then adding them together, we find 
	\begin{align}\label{350}
	&-\frac{1}{2}\frac{\d}{\d t}\left(\|\p(t)\|_{\L^2}^2+\|\varphi(t)\|_{\mathrm{L}^2}^2\right)+\alpha\|\nabla\p(t)\|_{\L^2}^2+(\B'(\widetilde\u(t))\p(t),\p(t))_{\L^2}\nonumber\\&=- ((\B'(\widetilde{\u}(t))-\B'(\u^*(t)))\p^*(t),\p(t))_{\mathbb{L}^2}-\langle h\nabla\varphi(t),\p(t)\rangle+\langle\u(t)-\Delta\u(t),\p(t)\rangle\nonumber\\&\quad+(\mathrm{div\ }\p(t),\varphi(t))_{\mathrm{L}^2}+(\xi(t),\varphi(t))_{\mathrm{L}^2}\nonumber\\&\leq\|\B'(\widetilde{\u}(t))-\B'(\u^*(t))\|_{\L^4}\|\p^*(t)\|_{\L^4}\|\p(t)\|_{\L^2}+(\varphi(t),\mathrm{div}(h\p(t)))_{\mathrm{L}^2}\nonumber\\&\quad+(\u(t),\p(t))_{\L^2}+(\nabla\u(t),\nabla\p(t))_{\L^2}+\|\mathrm{div\ }\p(t)\|_{\mathrm{L}^2}\|\varphi(t)\|_{\mathrm{L}^2}+\|\xi(t)\|_{\mathrm{L}^2}\|\varphi(t)\|_{\mathrm{L}^2}\nonumber\\&\leq \frac{2rC_{\Omega}}{\lambda}\|\u(t)\|_{\L^4}\|\p^*(t)\|_{\L^4}\|\nabla\p(t)\|_{\L^2}+M\|\varphi(t)\|_{\mathrm{L}^2}\|\p(t)\|_{\L^2}\nonumber\\&\quad+\sqrt{2}\mu\|\varphi(t)\|_{\mathrm{L}^2}\|\nabla\p(t)\|_{\L^2}+\|\u(t)\|_{\L^2}\|\p(t)\|_{\L^2}+\|\nabla\u(t)\|_{\L^2}\|\nabla\p(t)\|_{\L^2}\nonumber\\&\quad+\sqrt{2}\|\nabla\p(t)\|_{\L^2}\|\varphi(t)\|_{\mathrm{L}^2}+\|\xi(t)\|_{\mathrm{L}^2}\|\varphi(t)\|_{\mathrm{L}^2}\nonumber\\&\leq\frac{\alpha}{2}\|\nabla\p(t)\|_{\L^2}^2+\frac{r^2C_{\Omega}}{\lambda^2}\|\u(t)\|_{\L^4}^2\|\p^*(t)\|_{\L^4}^2+\left[\frac{C_{\Omega}M^2}{\alpha}+\frac{6(\mu^2+1)}{\alpha}+\frac{1}{2}\right]\|\varphi(t)\|_{\mathrm{L}^2}^2\nonumber\\&\quad+\frac{C_{\Omega}}{\alpha}\|\u(t)\|_{\L^2}^2+\frac{3}{\alpha}\|\nabla\u(t)\|_{\L^2}^2+\frac{1}{2}\|\xi(t)\|_{\mathrm{L}^2}^2. 
	\end{align}
	Integrating the inequality \eqref{350} from $t$ to $T$, we obtain
	\begin{align}\label{3.49}
	&\|\p(t)\|_{\mathbb{L}^2}^2+\|\varphi(t)\|_{\mathrm{L}^2}^2+\alpha\int_t^T\|\nabla\p(s)\|_{\mathbb{L}^2}^2\d s+2\int_t^T\||\widetilde\u(s)+\w^0(s)|^{\frac{1}{2}}\p(s)\|_{\mathbb{L}^2}^2\d s\nonumber\\&\leq \frac{r^2C_{\Omega}}{\lambda^2}\int_t^T\|\u(s)\|_{\L^4}^2\|\p^*(s)\|_{\L^4}^2\d s+\left[\frac{C_{\Omega}M^2}{\alpha}+\frac{12(\mu^2+1)}{\alpha}+1\right]\int_t^T\|\varphi(s)\|_{\mathrm{L}^2}^2\d s\nonumber\\&\quad+\frac{(C_{\Omega}+6)}{\alpha}\int_t^T\|\nabla\u(s)\|_{\L^2}^2\d s+\int_t^T\|\xi(s)\|_{\mathrm{L}^2}^2\d s. 
	\end{align}
	An application of Gronwall's inequality in \eqref{3.49} yields 
	\begin{align}\label{352}
	&\|\p(t)\|_{\mathbb{L}^2}^2+\|\varphi(t)\|_{\mathrm{L}^2}^2\nonumber\\&\leq \bigg\{\frac{r^2C_{\Omega}}{\lambda^2}\left(\int_0^T\|\u(t)\|_{\L^4}^4\d t\right)^{1/2}\left(\int_0^T\|\p^*(t)\|_{\L^4}^4\d t\right)^{1/2}+\frac{(C_{\Omega}+6)}{\alpha}\int_0^T\|\nabla\u(t)\|_{\L^2}^2\d t\nonumber\\&\quad+\int_0^T\|\xi(t)\|_{\mathrm{L}^2}^2\d t\bigg\}\exp\left\{\left[\left(\frac{C_{\Omega}M^2}{\alpha}+\frac{12(\mu^2+1)}{\alpha}\right)+1\right]T\right\},
	\end{align}
	for all $t\in[0,T]$. Using \eqref{352} in \eqref{3.49} and then taking $t=0$, we get 
	\begin{align}\label{3.51}
	\int_0^T\|\nabla\p(s)\|_{\mathbb{L}^2}^2\d s&\leq\frac{1}{\alpha} \Bigg\{\frac{r^2C_{\Omega}}{\lambda^2}\left(\int_0^T\|\u(t)\|_{\L^4}^4\d t\right)^{1/2}\left(\int_0^T\|\p^*(t)\|_{\L^4}^4\d t\right)^{1/2}\nonumber\\&\quad+\frac{(C_{\Omega}+6)}{\alpha}\int_0^T\|\nabla\u(t)\|_{\L^2}^2\d t+\int_0^T\|\xi(t)\|_{\mathrm{L}^2}^2\d t\Bigg\}\nonumber\\&\quad\times\exp\left\{2\left[\left(\frac{C_{\Omega}M^2}{\alpha}+\frac{12(\mu^2+1)}{\alpha}\right)+1\right]T\right\}\nonumber\\&\leq\frac{C_{\Omega}}{\alpha^2} \left(\frac{r^2}{\lambda^2}C_{\Omega}K_T +1\right) \left[\int_0^T\|\U^*(t)-\widetilde\U(t)\|_{\H^{-1}}^2\d t\right]e^{ \frac{C_{\Omega}T}{\alpha}\left(2+\mu^2+M\right)},
	\end{align}
	where we used \eqref{325a} and \eqref{40a} also. Combining \eqref{3.46} and \eqref{3.51}, it can be easily seen that 
	\begin{align}
	&	\int_0^T\|\U^*(t)-\widetilde{\U}(t)\|_{\H^{-1}}^2\d t \nonumber\\&\leq \frac{C_{\Omega}}{\alpha^2} \left(\frac{r^2}{\lambda^2}C_{\Omega}K_T +1\right) \left[\int_0^T\|\U^*(t)-\widetilde\U(t)\|_{\H^{-1}}^2\d t\right]e^{ \frac{C_{\Omega}T}{\alpha}\left(2+\mu^2+M\right)},
	\end{align}
	where $K_T$ is defined in \eqref{40a}. 	 Now, if we choose  time $T$ sufficiently small such that \eqref{347} is satisfied,	then we get that $\|\U^*-\widetilde{\U}\|_{\mathrm{L}^2(0,T;\H^{-1}(\Omega))}=0$. Thus, we obtain $\U^*(t)=\widetilde{\U}(t)$, a.e.x $t\in[0,T]$, for sufficiently small $T$.  This gives the uniqueness of the optimal control up to the time $T$ satisfying \eqref{347}.
\end{proof}

\subsection{Data assimilation problem} Let us now consider a problem similar to the data assimilation problems of meteorology. In the data assimilation problems arising from meteorology, determining the accurate initial data for the future predictions is an important step. This motivates us to consider a similar problem for the tidal dynamics.  We pose a optimal data initialization problem, where we find the unknown optimal initial data by minimizing suitable cost functional  subject to the tidal dynamics equations (see \cite{sritharan} for the case of incompressible Navier-Stokes equations and \cite{BDM2} for Cahn-Hillard-Navier-Stokes equations).  

Let us formulate the initial data optimization problem as finding an optimal initial velocity  $\U\in\L^2(\Omega)$ such that $(\u, \xi, \U) $ satisfies the following system:
\begin{equation}\label{11111}
\left\{
\begin{aligned}
\frac{\partial \mathbf{u}(t)}{\partial t}+\A\mathbf{u}(t)+\B(\mathbf{u}(t))+g\nabla \xi (t)&=\mathbf{f}(t),\ \text{ in }\ \H^{-1}(\Omega),\\
\frac{\partial \xi(t)}{\partial t}+\mathrm{div}(h \mathbf{u}(t))&=0, \ \text{ in }\ \mathrm{L}^2(\Omega),\\
\mathbf{u}(0)=\U\in\L^2(\Omega),\ \xi(0)&=\xi_0\in\mathrm{L}^2(\Omega),
\end{aligned}
\right.
\end{equation}
for a.e. $t\in[0,T]$ and minimizes the cost functional 
\begin{equation}\label{cost4}
\begin{aligned}
\mathscr{J}(\u,\xi,\U)&:=  \frac{1}{2} \|\U\|^2_{\L^2}+\frac{1}{2} \int_0^T \|\u(t) - \u_M(t)\|^2_{\L^2} \d t+ \frac{1}{2} \int_0^T \|\xi(t) -\xi_M(t)\|^2_{\mathrm{L}^2} \d t\\ &\quad+\frac{1}{2}\|\u(T)-\u_M^f\|^2_{\mathbb{L}^2}+\frac{1}{2}\|\xi(T)-\xi_M^f\|^2_{\mathrm{L}^2},
\end{aligned}
\end{equation}
where $\u_M$ is the measured average velocity of the fluid and $\xi_M$  is the measured elevation, $\u_M^f$ and $\xi_M^f$ are measured velocity and elevation at time $T$, respectively. In order to make the cost functional given in \eqref{cost4} meaningful, we assume that 
\begin{align}\label{um}\u_M\in\mathrm{L}^2(0,T;\L^2(\Omega)),\ \xi_M\in\mathrm{L}^2(0,T;\mathrm{L}^2(\Omega)),\ \u_M^f\in\L^2(\Omega)\ \text{ and }\ \xi_M^f\in\mathrm{L}^2(\Omega).\end{align}
In this context, we take the set of admissible control class, $\mathscr{U}_{\mathrm{ad}}=\L^2(\Omega)$.  Also, the  \emph{admissible class} $\mathscr{A}_{\mathrm{ad}}$ consists of all triples $(\u,\xi,\U)$ such that the set of states $(\u,\xi)$ is the unique weak solution of the tidal dynamics system \eqref{11111} with the control $\U \in \mathscr{U}_{ad} $.  We formulate the optimal control problem as:
\begin{align} \label{IOCP}
\min_{(\u,\xi,\U) \in \mathscr{A}_{\mathrm{ad}} } \mathscr{J}(\u,\xi,\U).
\end{align}

The next theorem provides the existence of  an optimal triplet $(\u^*,\xi^*,\U^*)$ for the problem \eqref{IOCP}. A proof of the Theorem can be established similar to that of Theorem \ref{optimal}.
\begin{theorem}[Existence of an optimal triplet]\label{optimal1}
	Let the initial data $\xi_0\in\mathrm{L}^2(\Omega)$ be given and let $\U\in\mathscr{U}_{\mathrm{ad}}$. Then there exists at least one triplet  $(\u^*,\xi^*,\U^*)\in\mathscr{A}_{\mathrm{ad}}$  such that the functional $ \mathscr{J}(\u,\xi,\U)$ given in \eqref{cost4} attains its minimum at $(\u^*,\xi^*,\U^*)$, where $(\u^*,\xi^*)$ is the unique weak solution of the system \eqref{11111}  with the initial data control $\U^*\in\mathscr{U}_{\mathrm{ad}}$.
\end{theorem}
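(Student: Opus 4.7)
The plan is to mimic the strategy used in the proof of Theorem \ref{optimal} essentially verbatim, with the only structural difference being that the control now enters through the initial velocity rather than as a distributed body force. I expect the main obstacle to be passing to the limit in the nonlinear operator $\B(\cdot)$ inside the weak formulation and, simultaneously, accommodating the two terminal-time contributions in the cost functional \eqref{cost4}, both of which require compactness beyond the weak bounds one obtains for free from Banach-Alaoglu.

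First I would set $\mathscr{J}_{*} := \inf_{\U \in \mathscr{U}_{\text{ad}}} \mathscr{J}(\u, \xi, \U)$ and pick a minimizing sequence $\{\U_n\} \subset \L^2(\Omega)$ with associated unique weak solutions $(\u_n, \xi_n)$ of \eqref{11111}, so that $\u_n(0) = \U_n$ and $\xi_n(0) = \xi_0$. Since $\mathbf{0} \in \mathscr{U}_{\text{ad}}$, comparing with $\mathscr{J}(\u, \xi, \mathbf{0})$ and using the standing assumption \eqref{um} immediately yields $\|\U_n\|_{\L^2} \leq \widetilde{C}$ for some $\widetilde{C} > 0$. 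Invoking the energy estimate \eqref{energy} of Theorem \ref{weake} with initial datum $\U_n$ then provides uniform bounds for $\{\u_n\}$ in $\mathrm{L}^{\infty}(0, T; \L^2(\Omega)) \cap \mathrm{L}^2(0, T; \H_0^1(\Omega))$ and for $\{\xi_n\}$ in $\mathrm{L}^{\infty}(0, T; \mathrm{L}^2(\Omega))$; calculations entirely analogous to \eqref{213}--\eqref{214} (coupled with Lemma \ref{lem2.3}(i)) then bound $\partial_t \u_n$ in $\mathrm{L}^2(0, T; \H^{-1}(\Omega))$ and $\partial_t \xi_n$ in $\mathrm{L}^2(0, T; \mathrm{L}^2(\Omega))$.

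Next I would apply Banach-Alaoglu to extract a subsequence (not relabelled) with $\U_n \rightharpoonup \U^{*}$ in $\L^2(\Omega)$ and the weak and weak-$*$ limits displayed in \eqref{conv}. Aubin-Lions compactness then delivers $\u_n \to \u^{*}$ strongly in $\mathrm{L}^2(0, T; \L^2(\Omega))$ (with $\xi_n \to \xi^{*}$ strongly in $\mathrm{L}^2(0, T; \mathrm{L}^2(\Omega))$) and places $(\u^{*}, \xi^{*})$ in $\mathrm{C}([0, T]; \L^2(\Omega)) \times \mathrm{C}([0, T]; \mathrm{L}^2(\Omega))$; interpolating the strong $\L^2$-convergence against the $\mathrm{L}^2(0, T; \H_0^1)$ bound upgrades this to $\u_n \to \u^{*}$ in $\mathrm{L}^2(0, T; \L^4(\Omega))$, which via Lemma \ref{lem2.3}(v) is precisely what is needed to pass to the limit in $\B(\u_n)$ inside the weak formulation. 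The linear terms pass by weak convergence. To recover the initial condition, the uniform $\mathrm{C}([0, T]; \L^2)$ bound combined with the $\partial_t \u_n$ bound yields, by a standard Lions-Magenes-type argument, pointwise weak convergence $\u_n(t) \rightharpoonup \u^{*}(t)$ in $\L^2(\Omega)$ for every $t \in [0, T]$; at $t = 0$ this forces $\u^{*}(0) = \U^{*}$, so that $(\u^{*}, \xi^{*}, \U^{*}) \in \mathscr{A}_{\text{ad}}$.

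The final step is weak lower semicontinuity of $\mathscr{J}$ along the extracted sequence. The two integral tracking terms in \eqref{cost4} are convex and continuous on $\mathrm{L}^2(0, T; \L^2(\Omega)) \times \mathrm{L}^2(0, T; \mathrm{L}^2(\Omega))$ and hence weakly lower semicontinuous (exactly as invoked in the proof of Theorem \ref{optimal} via Proposition II.4.5 of \cite{EI}), and the initial-data penalty $\frac{1}{2}\|\U\|_{\L^2}^2$ is weakly lower semicontinuous on $\L^2(\Omega)$. The terminal-time contributions $\frac{1}{2}\|\u(T) - \u_M^f\|_{\L^2}^2$ and $\frac{1}{2}\|\xi(T) - \xi_M^f\|_{\mathrm{L}^2}^2$ are where the anticipated obstacle actually appears; I would handle them by exploiting the pointwise weak convergence $\u_n(T) \rightharpoonup \u^{*}(T)$ and $\xi_n(T) \rightharpoonup \xi^{*}(T)$ obtained above together with weak lower semicontinuity of the squared Hilbert-space norm. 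Summing the pieces gives $\mathscr{J}(\u^{*}, \xi^{*}, \U^{*}) \leq \liminf_{n \to \infty} \mathscr{J}(\u_n, \xi_n, \U_n) = \mathscr{J}_{*}$, and the reverse inequality is built into the definition of $\mathscr{J}_{*}$, so $(\u^{*}, \xi^{*}, \U^{*})$ is an optimal triplet.
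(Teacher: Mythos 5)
Your proposal is correct and follows essentially the same route as the paper, which simply declares that the proof "can be established similar to that of Theorem \ref{optimal}": minimizing sequence, uniform bounds via the energy estimate \eqref{energy}, Banach--Alaoglu plus Aubin--Lions to pass to the limit, and weak lower semicontinuity of the cost functional. The only additions you make are exactly the adaptations the paper leaves implicit — identifying $\u^{*}(0)=\U^{*}$ from the pointwise weak convergence at $t=0$ and handling the terminal terms of \eqref{cost4} via weak convergence of $\u_n(T)$, $\xi_n(T)$ and weak lower semicontinuity of the norm — and these are handled correctly.
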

Formally, Pontryagin's minimum principle gives
\begin{align*}
\frac{1}{2}\|\U^*\|^2_{\L^2} +( \p(0),\U^* )_{\L^2}\leq \frac{1}{2}\|\mathrm{W}\|^2_{\L^2} + (\p(0),\mathrm{W})_{\L^2},
\end{align*}
for all $\mathrm{W} \in \mathscr{U}_{\mathrm{ad}}$. Since $\mathscr{U}_{\mathrm{ad}}=\L^2(\Omega)$ and $\frac{1}{2}\|\cdot\|_{\L^2}^2$ is Gateaux differentiable, then the optimal control is given by $\U^*=-\p(0)$, where $(\p,\varphi)$ is the unique weak solution of the following adjoint system:  
\begin{eqnarray}\label{adj3}
\left\{
\begin{aligned}
-\frac{\partial \p(t)}{\partial t}+\widetilde{\A}\p(t)+\B'(\u(t))\p (t)-h\nabla\varphi(t)&= \u(t)-\u_M(t), \ \text{ in }\ \H^{-1}(\Omega),\\
-\frac{\partial \varphi(t) }{\partial t}  -\mathrm{div\ }\p(t)&= \xi(t) - \xi_M(t),   \ \text{ in }\ \mathrm{L}^2(\Omega), \\ (\p(T), \varphi(T))=(\u(T)-\u_M^f, \varphi(T)-\u_M^f)&\in\L^2(\Omega)\times\mathrm{L}^2(\Omega),
\end{aligned}
\right.
\end{eqnarray}
for a.e. $t\in[0,T]$. A similar calculation as in the proof of Theorem  \ref{thm3.4} yields the existence of a unique weak solution to the system (\ref{adj3}) such that  \begin{align}\label{348}(\p,\varphi)\in (\C([0,T];\L^2(\Omega))\cap\mathrm{L}^2(0,T;\mathbb{H}^1_0(\Omega)))\times (\C([0,T];\mathrm{L}^2(\Omega))).\end{align}
Using the continuity of $\p(\cdot)$ in time  at $t=0$ in $\L^2(\Omega)$, we know that $\p(0)\in\L^2(\Omega)$ and hence we get   \begin{align}\label{in}\U^*=-\p(0)\in\mathscr{U}_{\mathrm{ad}}=\L^2(\Omega).\end{align}
Thus, we have the following Theorem, which can be proved in a similar way as that of  Theorem \ref{main} with some obvious modifications. 

\begin{theorem}[Optimal initial control]\label{data}
	Let $(\u^*,\xi^*,\U^*)\in\mathscr{A}_{\mathrm{ad}}$ be an optimal triplet. Then there exists a unique weak solution $(\p,\varphi)$ of the adjoint system \eqref{adj3} satisfying \eqref{348} such that the optimal control is obtained via \eqref{in}. 
\end{theorem}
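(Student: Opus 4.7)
The plan is to mirror the proof of Theorem \ref{main}, modified to accommodate the fact that here the control enters through the initial datum rather than as a distributed forcing. First I would establish the existence and uniqueness of a weak solution $(\p,\varphi)$ to the backward adjoint system \eqref{adj3} via Faedo--Galerkin approximation, exactly as in Theorem \ref{thm3.4}. The terminal data $\p(T)=\u^*(T)-\u_M^f\in\L^2(\Omega)$ and $\varphi(T)=\xi^*(T)-\xi_M^f\in\mathrm{L}^2(\Omega)$ belong to the required spaces because $\u^*\in\C([0,T];\L^2(\Omega))$ and $\xi^*\in\C([0,T];\mathrm{L}^2(\Omega))$, and the forcings $\u^*-\u_M$, $\xi^*-\xi_M$ lie in $\mathrm{L}^2(0,T;\L^2(\Omega))$ and $\mathrm{L}^2(0,T;\mathrm{L}^2(\Omega))$ by \eqref{um}. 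A-priori estimates analogous to \eqref{37}--\eqref{40} (using $(\B'(\u^*)\p,\p)_{\L^2}\geq 0$, H\"older, Young and Gronwall) yield the regularity \eqref{348}. In particular, the continuity of $\p$ at $t=0$ in $\L^2(\Omega)$ gives $\p(0)\in\L^2(\Omega)$.

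Next I would prove the analogues of Lemmas \ref{lem3.7} and \ref{lem3.8} with the initial-data perturbation. For $\U\in\L^2(\Omega)$ and $\tau>0$ small, let $(\u_{\U^*+\tau\U},\xi_{\U^*+\tau\U})$ denote the unique weak solution of \eqref{11111} with initial datum $\U^*+\tau\U$ (and $\xi_0$ unchanged). The difference $(\widetilde{\u},\widetilde{\xi})=(\u_{\U^*+\tau\U}-\u_{\U^*},\xi_{\U^*+\tau\U}-\xi_{\U^*})$ solves a system identical in form to \eqref{11} but with zero right-hand side in the $\u$-equation and initial data $(\tau\U,0)$. A standard energy argument parallel to \eqref{322}--\eqref{325} gives the $\tau^2$-estimate $\sup_{t\in[0,T]}[\|\widetilde{\u}(t)\|_{\L^2}^2+\|\widetilde{\xi}(t)\|_{\mathrm{L}^2}^2]+\alpha\int_0^T\|\nabla\widetilde{\u}\|_{\L^2}^2\d t\leq C\tau^2\|\U\|_{\L^2}^2$. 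Then the G\^ateaux differentiability of the control-to-state map is obtained by showing that $(\u_{\U^*+\tau\U}-\u_{\U^*})/\tau\to\mathfrak{w}$ and $(\xi_{\U^*+\tau\U}-\xi_{\U^*})/\tau\to\eta$, where $(\mathfrak{w},\eta)$ solves the linearized system \eqref{4a} with $\U\equiv\mathbf{0}$ but with initial data $(\mathfrak{w}(0),\eta(0))=(\U,0)$; the nonlinear remainder is controlled by the Taylor expansion estimate as in \eqref{3.30}.

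Finally, I would compute the directional derivative of $\mathcal{G}(\U):=\mathscr{J}(\u_{\U},\xi_{\U},\U)$ in the direction $\U\in\L^2(\Omega)$ and use the adjoint equation to identify the gradient. Expanding $\mathcal{G}(\U^*+\tau\U)-\mathcal{G}(\U^*)$, dividing by $\tau$, and passing $\tau\downarrow 0$ with the help of Lemma-analogues above yields
\begin{align*}
\langle\mathcal{G}'(\U^*),\U\rangle &= (\U^*,\U)_{\L^2}+\int_0^T(\mathfrak{w}(t),\u^*(t)-\u_M(t))_{\L^2}\d t+\int_0^T(\eta(t),\xi^*(t)-\xi_M(t))_{\mathrm{L}^2}\d t\\
&\quad+(\mathfrak{w}(T),\u^*(T)-\u_M^f)_{\L^2}+(\eta(T),\xi^*(T)-\xi_M^f)_{\mathrm{L}^2}.
\end{align*}
Now I would test the adjoint system \eqref{adj3} against $(\mathfrak{w},\eta)$ and integrate by parts in time over $[0,T]$; the boundary terms at $t=T$ exactly cancel the terminal-cost contributions thanks to the terminal conditions $\p(T)=\u^*(T)-\u_M^f$, $\varphi(T)=\xi^*(T)-\xi_M^f$, while the boundary terms at $t=0$ produce $-(\p(0),\U)_{\L^2}$ (since $\eta(0)=0$ and $\mathfrak{w}(0)=\U$), and the interior terms cancel the $(\mathfrak{w},\u^*-\u_M)$ and $(\eta,\xi^*-\xi_M)$ integrals since $(\mathfrak{w},\eta)$ solves the forward linearized equation with $\U_{\text{ctrl}}=0$. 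This gives $\langle\mathcal{G}'(\U^*),\U\rangle=(\U^*+\p(0),\U)_{\L^2}$. Taking both $\pm\U$ directions forces $(\U^*+\p(0),\U)_{\L^2}=0$ for every $\U\in\L^2(\Omega)$, hence $\U^*=-\p(0)$, which is \eqref{in}.

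The main technical obstacle I anticipate is the bookkeeping in the integration by parts when testing \eqref{adj3} against $(\mathfrak{w},\eta)$: one must carefully track the terminal and initial boundary terms and verify that the coupling terms $-h\nabla\varphi$ vs.\ $\nabla\eta$ and $-\mathrm{div}\,\p$ vs.\ $\mathrm{div}(h\mathfrak{w})$ match up correctly (using $(h\nabla\varphi,\mathfrak{w})=-(\varphi,\mathrm{div}(h\mathfrak{w}))$ after integration by parts, since $\mathfrak{w}|_{\partial\Omega}=\mathbf{0}$), so that all interior forcing terms collapse to the desired expression. The remaining ingredients (existence of the adjoint, G\^ateaux differentiability of the control-to-state map) are routine adaptations of the arguments already developed in Theorems \ref{thm3.4}, \ref{main} and Lemmas \ref{lem3.7}--\ref{lem3.8}.
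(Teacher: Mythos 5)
Your proposal is correct and follows essentially the same route as the paper, which disposes of this theorem by invoking the proof of Theorem \ref{main} "with some obvious modifications," Theorem \ref{thm3.4} for the solvability of \eqref{adj3}, and the continuity of $\p(\cdot)$ at $t=0$ to conclude $\U^*=-\p(0)\in\L^2(\Omega)$. Your write-up simply fills in the details the paper leaves implicit (the initial-data analogues of Lemmas \ref{lem3.7}--\ref{lem3.8}, the terminal-cost terms cancelling against $\p(T)=\u^*(T)-\u_M^f$, $\varphi(T)=\xi^*(T)-\xi_M^f$ in the integration by parts), and your final identity $\langle\mathcal{G}'(\U^*),\U\rangle=(\U^*+\p(0),\U)_{\L^2}$ yields exactly the characterization \eqref{in}.
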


\medskip\noindent
{\bf Acknowledgements:} M. T. Mohan would  like to thank the Department of Science and Technology (DST), India for Innovation in Science Pursuit for Inspired Research (INSPIRE) Faculty Award (IFA17-MA110). The author sincerely would like to thank the reviewers for their  valuable comments and suggestions.

\end{document}